\documentclass[11pt]{article}

\usepackage{amsmath,amssymb,amsthm,mathtools}
\usepackage[margin=1in]{geometry}
\usepackage{microtype}
\usepackage{enumitem}
\usepackage{booktabs}
\usepackage[english]{babel}
\usepackage[dvipsnames]{xcolor}
\usepackage{tikz}
\usetikzlibrary{arrows.meta,calc,fit,positioning,shapes.geometric}
\usepackage[font=small,labelfont=bf]{caption}
\usepackage[hidelinks]{hyperref}
\hypersetup{
  pdftitle={Negative Correlation for Forests and the q<1 Random-Cluster Model},
  pdfauthor={R. Altar Ciceksiz and Mohan Ravichandran},
  pdfkeywords={negative correlation, arboreal gas, spanning forest, random-cluster model, Rayleigh difference, effective resistance}
}

\usepackage{comment}
\newtheorem{thm}{Theorem}[section]
\newtheorem{prop}[thm]{Proposition}
\newtheorem{lemma}[thm]{Lemma}
\newtheorem{corollary}[thm]{Corollary}
\newtheorem{conj}[thm]{Conjecture}

\theoremstyle{definition}

\newtheorem{example}[thm]{Example}
\theoremstyle{remark}
\newtheorem{rmk}[thm]{Remark}

\DeclareMathOperator{\Cov}{Cov}
\DeclareMathOperator{\cl}{cl}
\newcommand{\ForestZ}{Z^{\mathcal{F}}}
\newcommand{\RCZ}{Z^{\mathrm{RC}}}
\newcommand{\Rayleigh}{\mathcal R}
\newcommand{\Prob}{\mathbb P}
\newcommand{\Exp}{\mathbb E}
\newcommand{\1}{\mathbf 1}

\setlist{nosep,leftmargin=*}

\definecolor{markedE}{HTML}{B4372E}
\definecolor{markedF}{HTML}{2468A2}
\definecolor{addedEdge}{HTML}{27834A}
\definecolor{softGold}{HTML}{E9D9A7}
\usepackage{xcolor}
\usepackage[normalem]{ulem}

\definecolor{darkgreen}{RGB}{0,100,0}

\tikzset{
  vtx/.style={circle,draw=black,fill=white,line width=.45pt,
    minimum size=4.5mm,inner sep=1pt,font=\scriptsize},
  tinyvtx/.style={circle,draw=black,fill=white,line width=.4pt,
    minimum size=3.2mm,inner sep=.5pt,font=\tiny},
  edgeE/.style={draw=markedE,line width=1.5pt},
  edgeF/.style={draw=markedF,line width=1.5pt},
  added/.style={draw=addedEdge,line width=1.35pt},
  ghost/.style={densely dashed,line width=1.05pt},
  ordinary/.style={draw=black!62,line width=.65pt},
  blob/.style={draw=black!48,rounded corners=5pt,fill=black!3,
    inner sep=4pt},
  flowbox/.style={draw=black!60,rounded corners=3pt,fill=black!2,
    inner sep=5pt,align=center,font=\small},
  figlabel/.style={font=\small\bfseries,anchor=north west}
}

\newcommand{\DeletedGraphFigure}{%
\begin{tikzpicture}[x=1cm,y=1cm]
  \begin{scope}[xshift=0cm]
    \node[font=\scriptsize\bfseries] at (0,1.48) {(a) Both pairs connected};
    \draw[rounded corners=8pt,fill=markedE!7,draw=markedE!35]
      (-1.83,.25) rectangle (1.83,.88);
    \draw[rounded corners=8pt,fill=markedF!7,draw=markedF!35]
      (-1.83,-.88) rectangle (1.83,-.25);
    \coordinate (aa) at (-1.45,.55); \coordinate (ab) at (1.45,.55);
    \coordinate (ac) at (-1.45,-.55); \coordinate (ad) at (1.45,-.55);
    \draw[edgeE,ghost] (aa)--node[above,font=\scriptsize,text=markedE] {$e$} (ab);
    \draw[edgeF,ghost] (ac)--node[below,font=\scriptsize,text=markedF] {$f$} (ad);
    \draw[ordinary] (aa) to[bend left=12] (ab);
    \draw[ordinary] (ac) to[bend right=12] (ad);
    \node[vtx] at (aa) {$a$}; \node[vtx] at (ab) {$b$};
    \node[vtx] at (ac) {$c$}; \node[vtx] at (ad) {$d$};
    \node[font=\scriptsize] at (0,-1.23) {$N_e\cap N_f$};
  \end{scope}

  \begin{scope}[xshift=5.1cm]
    \node[font=\scriptsize\bfseries] at (0,1.48) {(b) First crossing};
    \draw[rounded corners=8pt,fill=softGold!28,draw=black!30]
      (-1.82,-.94) rectangle (-1.08,.94);
    \draw[rounded corners=8pt,fill=black!5,draw=black!30]
      (1.08,-.94) rectangle (1.82,.94);
    \coordinate (ba) at (-1.45,.55); \coordinate (bb) at (1.45,.55);
    \coordinate (bc) at (-1.45,-.55); \coordinate (bd) at (1.45,-.55);
    \draw[edgeE,ghost] (ba)--node[above,font=\scriptsize,text=markedE] {$e$} (bb);
    \draw[edgeF,ghost] (bc)--node[below,font=\scriptsize,text=markedF] {$f$} (bd);
    \draw[ordinary] (ba)--(bc); \draw[ordinary] (bb)--(bd);
    \node[vtx] at (ba) {$a$}; \node[vtx] at (bb) {$b$};
    \node[vtx] at (bc) {$c$}; \node[vtx] at (bd) {$d$};
    \node[font=\scriptsize] at (0,-1.23) {$J_{ef}:\ ac\mid bd$};
  \end{scope}

  \begin{scope}[xshift=10.2cm]
    \node[font=\scriptsize\bfseries] at (0,1.48) {(c) Second crossing};
    \coordinate (ca) at (-1.45,.55); \coordinate (cb) at (1.45,.55);
    \coordinate (cc) at (-1.45,-.55); \coordinate (cd) at (1.45,-.55);
    \draw[edgeE,ghost] (ca)--node[above,font=\scriptsize,text=markedE] {$e$} (cb);
    \draw[edgeF,ghost] (cc)--node[below,font=\scriptsize,text=markedF] {$f$} (cd);
    \draw[ordinary,draw=markedE!65]
      (ca) .. controls (-.75,1.25) and (1.05,.15) .. (cd);
    \draw[ordinary,draw=markedF!65]
      (cb) .. controls (.75,-1.25) and (-1.05,-.15) .. (cc);
    \node[vtx] at (ca) {$a$}; \node[vtx] at (cb) {$b$};
    \node[vtx] at (cc) {$c$}; \node[vtx] at (cd) {$d$};
    \node[font=\scriptsize] at (0,-1.23) {$J_{ef}:\ ad\mid bc$};
  \end{scope}
\end{tikzpicture}%
}

\newcommand{\BridgeTypesFigure}{%
\begin{tikzpicture}[x=1cm,y=1cm]
  \begin{scope}[xshift=0cm]
    \node[font=\scriptsize\bfseries] at (2.4,1.35)
      {(a) Two ordinary bridges};
    \node[blob,minimum width=1.35cm] (OU) at (0,0) {$U$};
    \node[blob,minimum width=1.35cm] (OM) at (2.4,0) {$M$};
    \node[blob,minimum width=1.35cm] (OV) at (4.8,0) {$V$};
    \draw[edgeE] (OU)--node[above,font=\scriptsize,text=markedE] {$e$} (OM);
    \draw[edgeF] (OM)--node[above,font=\scriptsize,text=markedF] {$f$} (OV);
    \node[font=\scriptsize,align=center] at (2.4,-1.5)
      {deleting either marked edge\\disconnects the configuration};
  \end{scope}

  \begin{scope}[xshift=7.35cm]
    \node[font=\scriptsize\bfseries] at (2.4,1.35)
      {(b) A joint-bridge pair};
    \node[blob,minimum width=1.55cm,minimum height=.85cm] (JU) at (0,0) {$U$};
    \node[blob,minimum width=1.55cm,minimum height=.85cm] (JV) at (4.8,0) {$V$};
    \draw[edgeE] (JU.north east) to[bend left=18]
      node[pos=.25,above,font=\scriptsize,text=markedE] {$e$} (JV.north west);
    \draw[edgeF] (JU.south east) to[bend right=18]
      node[pos=.25,below,font=\scriptsize,text=markedF] {$f$} (JV.south west);
    \node[font=\scriptsize,align=center] at (2.4,-1.5)
      {neither edge is a bridge, but deleting\\both separates $U$ from $V$};
  \end{scope}
\end{tikzpicture}%
}

\newcommand{\EquipotentialFigure}{%
\begin{tikzpicture}[x=1cm,y=1cm]
  \filldraw[fill=markedE!7,draw=markedE!45,line width=.8pt]
    (-2.45,0) ellipse (3.05cm and 1.72cm);
  \filldraw[fill=markedF!7,draw=markedF!45,line width=.8pt]
    (2.45,0) ellipse (3.05cm and 1.72cm);
  \node[font=\small\bfseries,text=markedE] at (-3.65,1.24) {$G_e$};
  \node[font=\small\bfseries,text=markedF] at (3.65,1.24) {$G_f$};
  \node[font=\scriptsize,text=markedE,align=center] at (-3.65,-1.16)
    {$\widetilde v_e$ supported here};
  \node[font=\scriptsize,text=markedF,align=center] at (3.65,-1.16)
    {$\widetilde v_f$ supported here};

  \node[vtx] (l1) at (-4.1,.38) {};
  \node[vtx] (l2) at (-3.05,-.4) {};
  \node[vtx] (l3) at (-1.65,.65) {};
  \node[vtx] (l4) at (-1.5,-.67) {};
  \node[vtx] (r1) at (4.1,-.35) {};
  \node[vtx] (r2) at (3.05,.42) {};
  \node[vtx] (r3) at (1.65,.68) {};
  \node[vtx] (r4) at (1.5,-.64) {};
  \node[vtx] (s1) at (0,.72) {};
  \node[vtx] (s2) at (0,0) {};
  \node[vtx] (s3) at (0,-.72) {};

  \draw[ordinary] (l1)--(l3)--(s1) (l2)--(l4)--(s3) (l3)--(l4)--(s2);
  \draw[ordinary] (r1)--(r4)--(s3) (r2)--(r3)--(s1) (r3)--(r4)--(s2);
  \draw[ordinary] (s1)--(s2)--(s3);
  \draw[edgeE] (l1)--node[above left,font=\scriptsize,text=markedE] {$e$} (l2);
  \draw[edgeF] (r1)--node[below right,font=\scriptsize,text=markedF] {$f$} (r2);

  \node[font=\small\bfseries,fill=white,inner sep=2pt] at (0,1.45) {$S$};
  \node[font=\scriptsize,fill=white,inner sep=2pt] at (0,-1.35)
    {$\widetilde v_e=\widetilde v_f=0$};
\end{tikzpicture}%
}

\newcommand{\SymmetricExamplesFigure}{%
\begin{tikzpicture}[x=1cm,y=1cm]
  \begin{scope}[xshift=0cm]
    \node[figlabel] at (-2,2) {(a)};
    \node[font=\scriptsize\bfseries] at (0,1.78) {Complete join};
    \node[vtx,minimum size=8mm] (h1) at (-1.15,.65) {$H_1$};
    \node[vtx,minimum size=8mm] (h2) at (1.15,.65) {$H_2$};
    \node[vtx,minimum size=8mm] (h3) at (-1.15,-.65) {$H_3$};
    \node[vtx,minimum size=8mm] (h4) at (1.15,-.65) {$H_4$};
    \draw[ordinary] (h1)--(h3) (h1)--(h4) (h2)--(h3) (h2)--(h4);
    \draw[edgeE] (h1)--node[above,font=\scriptsize,text=markedE] {$e$} (h2);
    \draw[edgeF] (h3)--node[below,font=\scriptsize,text=markedF] {$f$} (h4);
    \node[font=\tiny,align=center] at (0,-1.35) {untouched parts have a common\\$e$-potential};
  \end{scope}

  \begin{scope}[xshift=5.25cm,scale=.72]
    \node[figlabel] at (-2.6,2.75) {(b)};
    \node[font=\scriptsize\bfseries] at (0,2.47) {$C_3\square C_3$};
    \foreach \i in {0,1,2}{
      \foreach \j in {0,1,2}{
        \node[tinyvtx] (t\i\j) at ({1.35*(\i-1)},{1.25*(\j-1)}) {};
      }
    }
    \foreach \j in {0,1,2}{
      \draw[ordinary] (t0\j)--(t1\j)--(t2\j);
      \draw[ordinary] (t0\j) to[bend left=32] (t2\j);
    }
    \foreach \i in {0,1,2}{
      \draw[ordinary] (t\i0)--(t\i1)--(t\i2);
      \draw[ordinary] (t\i0) to[bend right=32] (t\i2);
    }
    \draw[edgeE] (t00)--node[below,font=\scriptsize,text=markedE] {$e$} (t10);
    \draw[edgeF] (t01)--node[left,font=\scriptsize,text=markedF] {$f$} (t02);
    \node[font=\tiny,below=1pt of t00] {$(0,0)$};
    \node[font=\tiny,below=1pt of t10] {$(1,0)$};
    \node[font=\tiny,left=1pt of t01] {$(0,1)$};
    \node[font=\tiny,left=1pt of t02] {$(0,2)$};
    \draw[<->,densely dashed,black!50] (2.35,-1.25)--node[right,font=\tiny] {$y\mapsto-y$} (2.35,1.25);
  \end{scope}

  \begin{scope}[xshift=10.75cm]
    \node[figlabel] at (-2.25,2) {(c)};
    \node[font=\scriptsize\bfseries] at (0,1.78) {Petersen};
    \foreach \i/\ang in {0/90,1/18,2/-54,3/-126,4/162}
      \node[circle,fill=black,inner sep=1.15pt] (p\i) at (\ang:1.15) {};
    \foreach \i/\ang in {0/90,1/18,2/-54,3/-126,4/162}
      \node[circle,fill=black,inner sep=1.15pt] (p\i i) at (\ang:.52) {};
    \draw[ordinary] (p1)--(p2)--(p3)--(p4)--(p0);
    \draw[ordinary] (p0i)--(p2i)--(p4i)--(p1i)--(p3i)--(p0i);
    \foreach \i in {0,1,2,4}
      \draw[ordinary] (p\i)--(p\i i);
    \draw[edgeE] (p0)--node[above right,font=\scriptsize,fill=white,
      inner sep=1pt,text=markedE] {$e$} (p1);
    \draw[edgeF] (p3)--node[below right,font=\scriptsize,fill=white,
      inner sep=1pt,text=markedF] {$f$} (p3i);
  \end{scope}
\end{tikzpicture}%
}

\newcommand{\KFourChainFigure}{%
\begin{tikzpicture}[x=1cm,y=1cm,>=Latex]
  \filldraw[fill=softGold!20,draw=softGold!75,rounded corners=5pt]
    (1.72,-.98) rectangle (4.68,.98);
  \foreach \i/\x in {1/0,2/2.15,3/4.3,4/6.45}{
    \node[vtx] (x\i) at (\x,.72) {$x_{\i}$};
    \node[vtx] (y\i) at (\x,-.72) {$y_{\i}$};
    \draw[ordinary] (x\i)--(y\i);
  }
  \foreach \i/\j in {1/2,2/3,3/4}{
    \draw[ordinary] (x\i)--(x\j) (y\i)--(y\j)
      (x\i)--(y\j) (y\i)--(x\j);
  }
  \draw[edgeE] (x1)--node[left,font=\scriptsize,text=markedE] {$e$} (y1);
  \draw[edgeF] (x4)--node[right,font=\scriptsize,text=markedF] {$f$} (y4);
  \node[font=\scriptsize\bfseries] at (3.2,1.23) {one $K_4$ block};
  \draw[->,black!55] (3.2,1.08)--(3.2,.92);
  \node[font=\scriptsize,align=center] at (2.15,-1.28) {shared terminal\\pair};
  \draw[->,black!55] (2.15,-1.02)--(2.15,-.84);
  \node[flowbox] at ($(y2)!0.5!(y3)+(0,-1.25)$)
    {$(a_k,b_k)^T\ \xrightarrow{\ M\ }\ (a_{k+1},b_{k+1})^T$};
\end{tikzpicture}%
}

\newcommand{\InjectionFigure}{%
\begin{tikzpicture}[x=1cm,y=1cm,>=Latex]
  \begin{scope}[xshift=0cm,yshift=0cm]
    \node[figlabel] at (-.9,1.15) {(a)};
    \node[font=\scriptsize\bfseries] at (2.1,1.0) {Adjacent edges};
    \node[blob] (A) at (0,0) {$[a]$};
    \node[blob] (B) at (2.1,0) {$[b]$};
    \node[blob] (C) at (4.2,0) {$[c]$};
    \draw[edgeE] (A)--node[above,font=\scriptsize,text=markedE] {$e$} (B);
    \draw[edgeF] (B)--node[above,font=\scriptsize,text=markedF] {$f$} (C);
    \draw[added] (A) to[bend right=34] (C);
    \node[font=\scriptsize] at (2.1,-1.0) {$b_t\longmapsto j_t$};
  \end{scope}

  \begin{scope}[xshift=7.25cm,yshift=0cm]
    \node[figlabel] at (-.9,1.15) {(b)};
    \node[font=\scriptsize\bfseries] at (2.1,1.0) {Join the outer blocks};
    \node[blob] (U) at (0,0) {$U$};
    \node[blob] (M) at (2.1,0) {$M$};
    \node[blob] (V) at (4.2,0) {$V$};
    \draw[edgeE] (U)--node[above,font=\scriptsize,text=markedE] {$e$} (M);
    \draw[edgeF] (M)--node[above,font=\scriptsize,text=markedF] {$f$} (V);
    \draw[added] (U) to[bend right=34] (V);
    \node[font=\scriptsize] at (2.1,-1.0) {$b_t\longmapsto j_t$};
  \end{scope}

  \begin{scope}[xshift=0cm,yshift=-4cm]
    \node[figlabel] at (-.9,1.3) {(c)};
    \node[font=\scriptsize\bfseries] at (2.1,1.15) {Use an outside component};
    \node[blob] (U2) at (0,.15) {$U$};
    \node[blob] (M2) at (2.1,.15) {$M$};
    \node[blob] (V2) at (4.2,.15) {$V$};
    \node[blob] (X) at (2.1,-1.05) {$X\ni x$};
    \draw[edgeE] (U2)--node[above,font=\scriptsize,text=markedE] {$e$} (M2);
    \draw[edgeF] (M2)--node[above,font=\scriptsize,text=markedF] {$f$} (V2);
    \draw[added] (X)--node[below left,font=\scriptsize,text=addedEdge] {$xu$} (U2);
    \draw[added] (X)--node[below right,font=\scriptsize,text=addedEdge] {$xv$} (V2);
    \node[font=\scriptsize] at (2.1,-1.9) {$b_t\longmapsto j_{t-1}$};
  \end{scope}

  \begin{scope}[xshift=7.25cm,yshift=-4cm]
    \node[figlabel] at (-.9,1.3) {(d)};
    \node[font=\scriptsize\bfseries] at (2.1,1.15) {Different marked components};
    \node[blob] (a2) at (0,.35) {$a$};
    \node[blob] (b2) at (4.2,.35) {$b$};
    \node[blob] (c2) at (0,-.85) {$c$};
    \node[blob] (d2) at (4.2,-.85) {$d$};
    \draw[edgeE] (a2)--node[above,font=\scriptsize,text=markedE] {$e$} (b2);
    \draw[edgeF] (c2)--node[below,font=\scriptsize,text=markedF] {$f$} (d2);
    \draw[added] (a2)--node[left,font=\scriptsize,text=addedEdge] {$ac$} (c2);
    \draw[added] (b2)--node[right,font=\scriptsize,text=addedEdge] {$bd$} (d2);
    \node[font=\scriptsize] at (2.1,-1.9) {$b_t\longmapsto j_{t-1}$};
  \end{scope}

  \node[font=\scriptsize,align=center] at (6.0,-6.25)
    {red: $e$\qquad blue: $f$\qquad green: edges added by the injection};
\end{tikzpicture}%
}

\newcommand{\CycleTwoForestFigure}{%
\begin{tikzpicture}[x=1cm,y=1cm,>=Latex]
  \begin{scope}[xshift=0cm]
    \node[font=\scriptsize\bfseries] at (0,1.55) {(a) A common cycle in $H+e+f$};
    \node[vtx] (a) at (-1.55,.75) {$a$};
    \node[vtx] (b) at (1.55,.75) {$b$};
    \node[vtx] (c) at (1.55,-.75) {$c$};
    \node[vtx] (d) at (-1.55,-.75) {$d$};
    \node[tinyvtx] (p1) at (2.0,0) {};
    \node[tinyvtx] (p2) at (-2.0,0) {};
    \draw[edgeE] (a)--node[above,font=\scriptsize,text=markedE] {$e$} (b);
    \draw[ordinary] (b)--(p1)--(c) node[midway,right,font=\tiny] {$P_1\subset H$};
    \draw[edgeF] (c)--node[below,font=\scriptsize,text=markedF] {$f$} (d);
    \draw[ordinary] (d)--(p2)--(a) node[midway,left,font=\tiny] {$P_2\subset H$};
  \end{scope}

  \draw[->,line width=.8pt] (2.5,0)--node[above,font=\scriptsize,align=center] {remove $e,f$\\and extend} (4.1,0);

  \begin{scope}[xshift=6.4cm]
    \node[font=\scriptsize\bfseries] at (0,1.55) {(b) A crosswise spanning two-forest};
    \filldraw[fill=softGold!25,draw=black!28] (-1.35,0) ellipse (.72cm and 1.22cm);
    \filldraw[fill=black!5,draw=black!28] (1.35,0) ellipse (.72cm and 1.22cm);
    \node[vtx] (a3) at (-1.35,.72) {$a$};
    \node[vtx] (d3) at (-1.35,-.72) {$d$};
    \node[vtx] (b3) at (1.35,.72) {$b$};
    \node[vtx] (c3) at (1.35,-.72) {$c$};
    \node[tinyvtx] (l) at (-1.35,0) {};
    \node[tinyvtx] (r) at (1.35,0) {};
    \draw[ordinary] (a3)--(l)--(d3);
    \draw[ordinary] (b3)--(r)--(c3);
    \draw[edgeE,ghost] (a3)--node[above,font=\scriptsize,text=markedE] {$e$} (b3);
    \draw[edgeF,ghost] (d3)--node[below,font=\scriptsize,text=markedF] {$f$} (c3);
    \draw[ordinary] (l)--++(-.65,.35) (l)--++(-.6,-.4) (r)--++(.65,.35) (r)--++(.6,-.4);
    \node[font=\scriptsize] at (0,-1.55) {counted by $\tau_2^\times$};
  \end{scope}

  \begin{scope}[xshift=11.9cm]
    \node[font=\scriptsize\bfseries] at (0,1.55) {(c) No common cycle};
    \filldraw[fill=markedE!6,draw=markedE!35] (-.85,0) circle (.95cm);
    \filldraw[fill=markedF!6,draw=markedF!35] (.85,0) circle (.95cm);
    \node[tinyvtx] (le1) at (-1.25,.35) {};
    \node[tinyvtx] (le2) at (-.55,.35) {};
    \node[tinyvtx] (le3) at (-.9,-.42) {};
    \node[tinyvtx] (rf1) at (.55,.35) {};
    \node[tinyvtx] (rf2) at (1.25,.35) {};
    \node[tinyvtx] (rf3) at (.9,-.42) {};
    \draw[ordinary] (le1)--(le2)--(le3)--cycle;
    \draw[ordinary] (rf1)--(rf2)--(rf3)--cycle;
    \draw[edgeE] (le1)--node[above,font=\tiny,text=markedE] {$e$} (le2);
    \draw[edgeF] (rf1)--node[above,font=\tiny,text=markedF] {$f$} (rf2);
    \node[font=\scriptsize,text=markedE] at (-.85,-1.2) {$M_e$};
    \node[font=\scriptsize,text=markedF] at (.85,-1.2) {$M_f$};
    \node[font=\scriptsize] at (0,-1.55) {$N_e\ \perp\ N_f$};
  \end{scope}
\end{tikzpicture}%
}

\title{Negative Correlation for Forests and the \\ $q<1$ Random-Cluster Model}
\author{
R. Altar \c{C}i\c{c}eksiz\thanks{Ume\aa\ University, Sweden. \href{mailto:altar.ciceksiz@umu.se}{altar.ciceksiz@umu.se}}
\and
Mohan Ravichandran\thanks{Idanta.ai. \href{mailto:mohan.ravichandran@gmail.com}{mohan.ravichandran@gmail.com}}
}
\date{}

\begin{document}
\maketitle

\begin{abstract}
We study negative edge correlation for two well-known models in statistical physics, the arboreal gas and the $q < 1$ random-cluster model.
%using a new, simple combinatorial observation: 
We show that after the edges $e$ and $f$ are removed, their Rayleigh difference is a crossing contribution minus the covariance of two endpoint-connectivity events.  For the arboreal gas on an arbitrary finite graph, we analyze the two leading coefficients of this difference.  The first is the classical transfer-current square. If it vanishes, the second has an electrical sum-of-squares formula.  This proves negative edge correlation at sufficiently large fugacity whenever one of the two leading coefficients of the Rayleigh difference is nonzero, and characterizes simultaneous equality by an equipotential decomposition.

For the random-cluster model on the complete graph $K_n$, we prove negative edge correlation throughout $0<q<1$ when all edge weights are at least $2$. In the uniform case, we can prove something stronger: negative correlation also holds when all the weights are equal and at least $1$.  In particular, distinct edges in a connected spanning subgraph of $K_n$, weighted by a fixed fugacity raised to its number of edges, are negatively correlated for every $n$. We also explore what seems to be a general phenomenon: positive correlation of connectivity events.  For the arboreal gas, we prove positive correlation for connectivity events at high fugacities; on lattices, the conjectured inequality would make the two-point function supermultiplicative and produce a convex inverse correlation length.  Exact coefficientwise computations support the graphical and matroidal conjectures, including on the Seymour--Welsh matroid $\mathcal S_8$, where ordinary edge-negative correlation fails.

The results in this paper were derived by the authors without the use of Large Language models. The authors did benefit from using GPT-5 Pro for generating code to test out hypotheses as well as for simplifying the arguments. 
%This paper benefited from the use of Large Language models. The result on edge negative correlation for the $q < 1$ random cluster model on $K_n$ was proved by us without the use of large language models, though we used GPT-5 pro for simplifying the proof. The other main result, negative correlation for large fugacity forests on general graphs was also human generated, but the authors also used GPT 5.1 for several routine but intricate symbolic calculations. The authors checked and verified all results and take responsibility for any errors or omissions.  
\end{abstract}

\noindent\textbf{Keywords:} negative correlation; arboreal gas; spanning forest; random-cluster model; Rayleigh difference; effective resistance.

\noindent\textbf{MSC 2020:} 05C31, 60C05, 60K35, 82B20.

%\tableofcontents

\section{Introduction}\label{sec:introduction}

Correlation inequalities for random graphs lie at the intersection of combinatorics, probability and statistical physics. Over the last 50 years, researchers have developed a robust theory of positive correlation and association and applied it in the study of models such as the ferromagnetic Ising model and the random-cluster model with $q\geq 1$. This theory has benefited from a salubrious circumstance: The FKG inequality provides a local-to-global criterion for positive correlation/association and this makes it easy to prove that many models satisfy global positive association \cite{grimmett-1995, grimmett-2006}.

On the other hand, the corresponding theory for negative association is far less developed even though the need for it has been explicitly stated in many papers \cite{pemantle2000towards,borcea-branden-liggett-2009}. There is no general local-to-global principle for negative association, and even pairwise negative correlation questions on finite graphs are poorly understood; see \cite{pemantle2000towards} for an account of this matter. 

In this paper, we study two well-known models, \emph{the forest measure} and \emph{the random-cluster measure} with $q\leq 1$, and their respective negative dependence properties. Let $G=(V,E)$ be a graph with vertex set $V$ and edge set $E$. For a probability measure $\mu$ on $\{0,1\}^{E}$, we write $X_e$ for the indicator random variable of the edge $e\in E$. The measure $\mu$ is called \emph{negatively associated} if 
\[\Exp_\mu[FG]\leq \Exp_\mu[F]\Exp_\mu[G]\]
whenever $F$ and $G$ are increasing functions depending on disjoint sets of coordinates. We call the measure \emph{negatively edge correlated} if the weaker pairwise condition
\[\Exp_\mu[X_eX_f]\leq \Exp_\mu[X_e]\Exp_\mu[X_f]\]
holds for all $e\neq f\in E$. Note that this can be written as $\Cov_\mu(X_e,X_f)\leq 0$. Although weaker than negative association, negative edge correlation itself contains delicate enumerative information and is in general very hard to show. 

The most important example of a measure exhibiting negative correlation and association is the spanning tree measure.

\begin{thm}[Kirchhoff, Feder-Mihail]
Let $G = (V, E)$ be a finite graph and let $\mu$ be the probability measure on $\{0, 1\}^E$ that is uniform on the spanning trees of $G$. Then $\mu$ is not only negatively edge correlated but also negatively associated.  
\end{thm}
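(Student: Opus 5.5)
The plan has two independent halves: negative edge correlation through Kirchhoff's transfer-current theorem, and negative association through the Feder--Mihail induction over the matroid of spanning trees.

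\emph{Negative edge correlation.} For edges $e=ab$ and $f=cd$, let $T(G)$ denote the number of spanning trees. The quantities appearing in the Rayleigh difference are then
\[
\mu(X_e)=\frac{T(G/e)}{T(G)},\qquad \mu(X_eX_f)=\frac{T(G/e/f)}{T(G)}.
\]
The inequality $\mu(X_eX_f)\le\mu(X_e)\mu(X_f)$ is therefore the Rayleigh inequality
\[
T(G)\,T(G/e/f)\le T(G/e)\,T(G/f).
\]
I will get it from the matrix-tree theorem. Let $L$ be the weighted Laplacian and let $Y(e,f)$ be the transfer current, i.e.\ the voltage drop across $f$ when a unit current is sent through $e$. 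Kirchhoff's theorem, proved by Cramer's rule applied to a reduced Laplacian, gives $\mu(X_e)=Y(e,e)$. The Burton--Pemantle determinantal formula extends this to
\[
\mu(X_eX_f)=Y(e,e)Y(f,f)-Y(e,f)Y(f,e).
\]
The transfer current is symmetric, $Y(e,f)=Y(f,e)$, by reciprocity of the Green function. Hence the covariance equals $-Y(e,f)^2\le0$. An alternative route is to derive the two-edge formula directly: use the Jacobi complementary-minor identity on the reduced Laplacian, $\det$ of a $2\times2$ minor of the inverse equals a ratio of determinants. This gives the Rayleigh inequality as $\det\begin{pmatrix}R_e&\ast\\\ast&R_f\end{pmatrix}\ge0$ for the positive semidefinite Gram matrix of the vectors $L^{-1/2}(\1_a-\1_b)$ and $L^{-1/2}(\1_c-\1_d)$.

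\emph{Negative association.} Here I follow Feder--Mihail and induct on $|E|$, working in the class of weighted, balanced matroids. The induction step needs two inputs.

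\begin{enumerate}
\item The class must be closed under contraction and deletion. This is immediate for graphic matroids, since contractions and deletions of graphs are graphs, and conditioning on $X_g=1$ or $X_g=0$ gives the spanning tree measure of $G/g$ or $G\setminus g$.
\item Every minor must be negatively edge correlated, which is the first half.
\end{enumerate}

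Now take increasing $F$ depending on coordinates in $A$ and increasing $G$ depending on a disjoint set $B$. Since $|T|=n-1$ is constant, $\sum_g(X_g-\mu(X_g))=0$. Multiplying by $F$ and taking expectations gives $\sum_g\Cov(F,X_g)=0$, which yields some $g$ with $\Cov(F,X_g)\ge0$ and $\Cov(G,X_g)\le0$ or the reverse. More precisely, Feder--Mihail show that one can choose $g\notin A\cup B$, or use $g$ in $A$ itself, so that conditioning on $X_g$ separates the problem. Write
\[
\Exp[FG]=\Exp\bigl[\Exp[FG\mid X_g]\bigr]\le\Exp\bigl[\Exp[F\mid X_g]\,\Exp[G\mid X_g]\bigr]
\]
by induction in the minors. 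Then apply Chebyshev's sum inequality to the two functions of the single bit $X_g$, which have opposite monotonicity.

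The main obstacle is this last step: showing that $\Exp[F\mid X_g]$ and $\Exp[G\mid X_g]$ are monotone in opposite directions. I would first try the standard lemma that negative correlation of all minors implies $\Cov(F,X_g)\ge0\Rightarrow\Cov(G,X_g)\le0$ for $g$ in the support of $F$. The averaging identity $\sum_g\Cov(F,X_g)=0$, combined with the induction hypothesis applied to the pair $(X_h,G)$ in minors, is what selects a good $g$.
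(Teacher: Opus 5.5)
Your edge-correlation half is correct. It matches what the paper actually does: the paper only cites Kirchhoff and Feder--Mihail for this theorem, but in Section~4 it derives $\tau(G-e)\tau(G-f)-\tau(G)\tau(G-e-f)=(\tau(G)u_e(f))^2$ from the matrix determinant lemma. That is your $\Cov(X_e,X_f)=-Y(e,f)^2$, written with deletions instead of contractions. One slip in your alternative route: nonnegativity of the Gram determinant only gives $\mu(X_eX_f)\ge0$. The Rayleigh inequality is $\det\le R_eR_f$ (Hadamard), i.e.\ $Y(e,f)^2\ge0$.

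The negative-association half has a genuine gap, exactly at the step you flag; the paper just cites Feder--Mihail there. Write $\Psi$ for your second increasing function, to avoid the clash with the graph $G$.

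\emph{Why the step fails as stated.} The identity $\sum_g\Cov(F,X_g)=0$ gives some $g$ with $\Cov(F,X_g)\ge0$, but says nothing about $\Cov(\Psi,X_g)$ for that same $g$. Choosing $g\notin A\cup B$ fails outright: both $\Exp[F\mid X_g]$ and $\Exp[\Psi\mid X_g]$ are then nonincreasing in $X_g$, so Chebyshev goes the wrong way.

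\emph{The missing lemma.} What is needed is the unconditional statement
(L): if $\Psi$ is increasing and does not depend on $X_h$, then $\Exp[\Psi\mid X_h=1]\le\Exp[\Psi\mid X_h=0]$.
Your ``$\Cov(F,X_g)\ge0\Rightarrow\Cov(G,X_g)\le0$'' is a misstatement of it. (L) is the real content of Feder--Mihail, and it needs its own induction on $|E|$. Set $\mu_\epsilon=\mu(\cdot\mid X_h=\epsilon)$ and $\mu_{\epsilon\delta}=\mu(\cdot\mid X_h=\epsilon,X_k=\delta)$. Homogeneity of $\mu_1$ (not of $\mu$) gives $\sum_{k\neq h}\Cov_{\mu_1}(\Psi,X_k)=0$. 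So some non-degenerate $k$ has $\Exp_{\mu_{11}}\Psi\ge\Exp_{\mu_{10}}\Psi$, and then
\[ \Exp_{\mu_1}\Psi=\sum_\delta\mu_1(X_k=\delta)\Exp_{\mu_{1\delta}}\Psi\le\sum_\delta\mu_0(X_k=\delta)\Exp_{\mu_{1\delta}}\Psi\le\sum_\delta\mu_0(X_k=\delta)\Exp_{\mu_{0\delta}}\Psi=\Exp_{\mu_0}\Psi. \]
The first inequality uses $\mu_1(X_k=1)\le\mu_0(X_k=1)$, which is pairwise negative correlation. The second is (L) applied inductively in the minors $G/k$ and $G\setminus k$.

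\emph{How your step then closes.} By (L), $\Cov(F,X_g)\le0$ for every $g\notin A$. The averaging identity then forces $\sum_{g\in A}\Cov(F,X_g)\ge0$, so some $g\in A$ has $\Cov(F,X_g)\ge0$. Since $g\notin B$, (L) also gives $\Cov(\Psi,X_g)\le0$. Your conditioning on $X_g$, induction in the two minors, and Chebyshev's inequality then finish the proof.
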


Negative edge correlation is due to Kirchhoff and also holds for weighted spanning tree measures (where each edge has a nonnegative weight and the weight of a spanning tree is the product of the edge weights over the edges in the tree). Negative association was proved by Feder and Mihail in \cite{feder1992balanced}.

One of the strongest techniques developed to show negative correlation is the theory of strongly Rayleigh measures of Borcea, Bränden and Liggett \cite{borcea-branden-liggett-2009}. It includes the spanning tree measure which was originally shown to be negatively edge correlated by Kirchhoff \cite{kir} and which was elaborated by Brooks, Smith, Stone and Tutte \cite{bsst1940,DoyleSnell1984}. Its modern probabilistic form is the transfer-current theorem of Burton and Pemantle \cite{burton-pemantle-1993}.  

The first model we will be concerned with is the generalization of the uniform spanning tree measure called the multivariate arboreal gas, or the multivariate forest measure. We let $\mathcal{F}(G)$ denote the set of spanning forests of $G$ and define the partition function for the spanning forest measure with weights $\mathbf{z}: E\to \mathbb{R}^{\geq 0}$, $\{z_g\}_{g\in E(G)}$ as
\[Z_G^{\mathcal{F}}(\mathbf{z})=\sum_{F\in \mathcal{F}(G)}\prod_{g\in F}z_g.\]

When $z_g=\lambda$ for all $g\in E(G)$, we will sometimes call the common weight the fugacity, following terminology from statistical physics. In this case, the partition function becomes 
\[Z_G^{\mathcal{F}}(\lambda)=\sum_{F\in \mathcal{F}(G)}\prod_{g\in F}\lambda=\sum_{k=1}^{|V(G)|}\tau_k(G)\lambda^{|V(G)|-k},\]
where $\tau_k(G)$ counts the number of unrooted forests of $G$ with $k$ components. Thus, a forest $F\in \mathcal{F}(G)$ has probability \[\Prob(F)=\dfrac{\lambda^{|F|}}{Z_G^{\mathcal{F}}(\lambda)}.\]
We note that $\tau_1(G):=\tau(G)$ is the number of spanning trees and when $\lambda=1$, $Z_G^{\mathcal{F}}(1):=Z_G^{\mathcal{F}}$ is just the partition function (normalizer) of the uniform spanning forest measure, and the probability for a forest $F$ is just \[\Prob(F)=\dfrac{1}{Z_G^{\mathcal{F}}}.\]
Moreover, on a connected graph, the arboreal gas measure converges to the uniform spanning tree measure as $\lambda \to \infty$ \cite{grimmett-2006}.

Let us note here the connection between the partition function and negative edge correlation. Let $Z_G(\mathbf{z})$ be a multi-affine partition function on edge subsets, not necessarily on forests, of $G$. For distinct edges $e,f$, we define the \emph{Rayleigh difference} as
\[\mathcal{R}_{ef}(Z_G):=(\partial_{z_e} Z_G)(\partial_{z_f}Z_G)-Z_G(\partial_{z_ez_f}Z_G).\]
Henceforth, when there is no confusion, we will write $\partial_{z_e}$ as $\partial_e$ as a shorthand. All Rayleigh differences are formed from the multivariate partition function before any specialization of the edge weights. We write $\Rayleigh_{ef}(Z_G)(\lambda\mathbf1)$ for evaluation at $z_g=\lambda$ for every edge $g$. Under the probability measure defined using the monomials of $Z_G$, which are edge subsets of $G$, we have
\[\Cov(X_e,X_f)=-\frac{z_ez_f}{Z_G^2}\Rayleigh_{ef}(Z_G).\]
Thus, negative edge correlation under this measure corresponds to $\Rayleigh_{ef}(Z_G)\geq 0$. 

In this terminology, the (in)famous conjecture for the arboreal gas measure (and so the forest measure), often attributed to Kahn, can be stated as follows.

\begin{conj}[Kahn \cite{kahn-2000}; see also \cite{grimmett-winkler-2004}]\label{conj:kahn}
For every finite graph $G$, every $\lambda>0$, and all distinct edges $e$ and $f$,
\[
\Rayleigh_{ef}(\ForestZ_G)(\lambda\mathbf1)\geq0.
\]
\end{conj}
The conjecture is known for small graphs and for series-parallel graphs \cite{grimmett-winkler-2004,semple2008negative,wagner2008negatively}; it remains open even for complete graphs in full generality. Even the uniform forest measure case when $\lambda=1$ is not known in full generality and is called the uniform forest measure conjecture. Stark proved that the uniform forest measures are negatively edge correlated on $K_n$ for all sufficiently large $K_n$ \cite{stark2011edge}.\footnote{Stark's proof carries over verbatim when every edge has the same fixed weight $\lambda>0$.  Consequently, for each fixed $\lambda$, there exists $n_0(\lambda)$ such that the arboreal gas on $K_n$ is negatively edge correlated for every $n\geq n_0(\lambda)$.}  Huang obtained large-fugacity results for adjacent edges in arbitrary graphs, and for arbitrary pairs in $K_n$ subject to additional size assumptions \cite{huang2024negative}. Lorentzian polynomials and completely log-concave polynomials give a sharp approximate inequality in all regimes and for all graphs, with a relaxed factor of $2-\frac{2}{|E(G)|}$ in place of the desired factor of 1 \cite{anari2019log,branden2020lorentzian,huh2021correlation}. We have
\[Z_G\partial_{ef}Z_G\leq 2\left(1-\frac{1}{|E(G)|}\right)\partial_eZ_G \partial_f Z_G.\]

The approach using Lorentzian polynomials has an obstruction that appears to be genuinely graphical. As shown by Seymour and Welsh, the matroid $\mathcal{S}_8$ does not exhibit negative edge correlation \cite{Seymour_Welsh_1975}. However, in the matroid setting, Feder and Mihail showed that balanced matroids exhibit negative edge correlation \cite{feder1992balanced}. Thus, we observe that neither abstract log concavity nor Lorentzian polynomials, nor the passage to matroids as a generalization by itself explains Kahn's conjecture. 

Wagner \cite{wagner2008negatively} conjectured the following strengthening of Kahn's conjecture. 
\begin{conj}
\label{conj:sumsofsquares}
Let $G=(V,E)$ be a finite graph, and let $e,f\in E$ be distinct edges. The Rayleigh difference $\Rayleigh_{ef}(\ForestZ_G)$ can be expressed as a finite sum of monomials in the edge weights multiplied by squares of polynomials in those weights. See Wagner~\cite{wagner2008negatively} for the more precise combinatorial formulation. 
\end{conj}

If true, Conjecture \ref{conj:sumsofsquares} implies Conjecture \ref{conj:kahn} since the sums of squares form guarantees positivity immediately. Erickson proved that the sums of squares form holds for the arboreal gas when the underlying graph is a series-parallel graph \cite{Erickson2012SOS} which can be considered as initial evidence for the conjecture. Our first main theorem follows this philosophy. We prove these when $\lambda$ is sufficiently large for all graphs.

%For equal fugacity, the deleted-graph expression differs from the Rayleigh difference by a factor of $\lambda^2$:
%\[
%\lambda^2\Rayleigh_{ef}(\ForestZ_G)(\lambda\mathbf1)
%=\ForestZ_{G-e}(\lambda)\ForestZ_{G-f}(\lambda)
%-\ForestZ_G(\lambda)\ForestZ_{G-e-f}(\lambda).
%\]
%This follows from multi-affinity; the general identity with factor $z_ez_f$ is proved in Lemma~\ref{lem:deletion-identity}. 

We have the following theorem for the second-highest term when the top vanishes. The electrical terms and the pseudoinverse used below will be defined in the appropriate section. We first set up some notation and state the transfer-current theorem. 

\begin{thm}[Burton-Pemantle]
\label{thm:transfer-current}

Let $G=(V,E)$ be a connected graph on $n$ vertices, and let $e,f$ be distinct non-bridge edges.  We orient the edges arbitrarily, and put $b_h=\1_{h^+}-\1_{h^-}$. Let $L$ be the graph Laplacian, and $L^\dagger$ be the Moore-Penrose pseudoinverse. We set
\[
v_h=L^\dagger b_h,
\qquad
u_h(g)=b_g^TL^\dagger b_h.
\]
We can write
\[
\Rayleigh_{ef}(\ForestZ_G)(\lambda\mathbf1)
=H_0\lambda^{2n-4}+H_1\lambda^{2n-5}+O(\lambda^{2n-6}).
\]
Then, we have
\[
H_0=\tau(G)^2u_e(f)^2.
\]
\end{thm}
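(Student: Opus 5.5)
Since $Z^{\mathcal F}_G$ is multi-affine, all its derivatives are polynomials in $\lambda$ once we specialize. The plan is therefore to identify the leading coefficients of the four factors in $\Rayleigh_{ef}$ separately, and then apply the Burton--Pemantle transfer-current formula to the resulting spanning-tree expression.

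\textbf{Step 1: degree counting.} A spanning forest of the connected graph $G$ has at most $n-1$ edges, with equality exactly for spanning trees. Hence
\[
Z^{\mathcal F}_G(\lambda\mathbf1)=\tau(G)\lambda^{n-1}+O(\lambda^{n-2}).
\]
The derivative $\partial_e Z^{\mathcal F}_G$ is the sum over forests $F\ni e$ of $\prod_{g\in F\setminus e}z_g$. At $\lambda\mathbf1$ its top term is $\tau_e\lambda^{n-2}$, where $\tau_e$ is the number of spanning trees containing $e$. The same holds for $f$. Likewise
\[
\partial_{ef}Z^{\mathcal F}_G(\lambda\mathbf1)=\tau_{ef}\lambda^{n-3}+O(\lambda^{n-4}),
\]
where $\tau_{ef}$ is the number of spanning trees containing both $e$ and $f$. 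We need $n\ge 3$ here; if $\{e,f\}$ lies in no tree, then $\tau_{ef}=0$ and the statement should be read with that convention. Multiplying out, both products $\partial_eZ\,\partial_fZ$ and $Z\,\partial_{ef}Z$ have degree at most $2n-4$. Therefore
\[
H_0=\tau_e\tau_f-\tau\,\tau_{ef}.
\]
This is $\tau^2$ times the Rayleigh difference of the spanning-tree measure:
\[
H_0=\tau^2\big(\Prob_T(e)\Prob_T(f)-\Prob_T(e,f)\big)=-\tau^2\Cov_T(X_e,X_f),
\]
where $T$ denotes the uniform spanning tree. This also gives the expansion asserted in the statement.

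\textbf{Step 2: transfer current.} The remaining task is the classical identity $\Prob_T(e)\Prob_T(f)-\Prob_T(e,f)=u_e(f)^2$, in the notation $u_h(g)=b_g^TL^\dagger b_h$.

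1. Kirchhoff's theorem gives $\Prob_T(g)=b_g^TL^\dagger b_g=u_g(g)$, the effective resistance across $g$ with unit conductances.
2. To get the joint probability, contract $e$ and use the matrix-tree theorem on $G/e$, or equivalently use the Schur complement. The $2\times2$ determinant form of the transfer-current theorem then gives
\[
\Prob_T(e,f)=\det\begin{pmatrix}u_e(e)&u_e(f)\\u_f(e)&u_f(f)\end{pmatrix}.
\]
3. Symmetry of $L^\dagger$ gives $u_e(f)=u_f(e)$.

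Expanding the determinant yields $\Prob_T(e)\Prob_T(f)-\Prob_T(e,f)=u_e(f)^2$. Multiplying by $\tau(G)^2$ gives $H_0=\tau(G)^2u_e(f)^2$.

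The non-bridge hypothesis is not essential to the argument, but it keeps the probabilities nondegenerate. If, say, $e$ is a bridge, then $u_e(f)=0$ by Kirchhoff's current law and $H_0=0$ trivially.

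The only step needing care is the $2\times 2$ transfer-current determinant. I would prove it directly by deletion--contraction:
\[
\tau_{ef}(G)=\tau_f(G/e)\quad\text{and}\quad \Prob_{G/e}(f)=\text{effective resistance of } f \text{ in } G/e.
\]
The resistance in $G/e$ is computed by a rank-one update of $L^\dagger$ in the direction $b_e$, via the Sherman--Morrison formula for contracting $e$, which amounts to adding infinite conductance on $e$. This gives
\[
R^{G/e}_f=u_f(f)-\frac{u_e(f)^2}{u_e(e)}.
\]
Multiplying by $\Prob_T(e)=u_e(e)$ gives the determinant. Verifying this limit of the Sherman--Morrison update carefully, on the pseudoinverse rather than an inverse, is the main technical point.
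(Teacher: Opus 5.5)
Your proposal is correct. It shares the paper's skeleton: identify $H_0$ as $\tau(G)^2$ times the spanning-tree Rayleigh difference, then apply the $2\times2$ transfer-current identity. The difference is that you work through contraction, whereas the paper works through deletion.

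The paper uses Lemma~\ref{lem:deletion-identity} to write $\lambda^2\Rayleigh_{ef}(\ForestZ_G)(\lambda\1)=\ForestZ_{G-e}\ForestZ_{G-f}-\ForestZ_G\ForestZ_{G-e-f}$ and reads off $H_0=\tau(G-e)\tau(G-f)-\tau(G)\tau(G-e-f)$. It then computes $\tau(G-e-f)=\tau(G)\bigl((1-r_G(e))(1-r_G(f))-u_e(f)^2\bigr)$ by the matrix determinant lemma on the reduced Laplacian $\hat L$. Deletion is a finite rank-two perturbation of an invertible matrix, so no limits or pseudoinverse issues arise. The same deletion updates, with denominators $1-r_G(h)$, are what the paper reuses in Lemma~\ref{lem:electrical-calculus} to handle $H_1$.

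You instead read $H_0=\tau_e\tau_f-\tau\,\tau_{ef}$ directly off the derivatives. This is the same number, by inclusion--exclusion: $\tau_{ef}=\tau-\tau(G-e)-\tau(G-f)+\tau(G-e-f)$. Your route needs no deletion identity and is the textbook proof of transfer current. Its cost is the contraction step you flagged.

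That step closes without any delicate limit of pseudoinverses. Give $e$ conductance $c>0$. For each finite $c$, Sherman--Morrison on $\1^\perp$ gives exactly
$b_f^TL_c^\dagger b_f=u_f(f)-\frac{(c-1)u_e(f)^2}{1+(c-1)u_e(e)}$.
The weighted Kirchhoff theorem identifies the left side with $\frac{c\,\tau_{ef}+\tau_{f,\bar e}}{c\,\tau_e+\tau_{\bar e}}$, where $\tau_{\bar e}$ counts trees avoiding $e$ and $\tau_{f,\bar e}$ counts trees containing $f$ but not $e$. Letting $c\to\infty$ in this identity of rational functions gives $\tau_{ef}/\tau_e=u_f(f)-u_e(f)^2/u_e(e)$, which is exactly the formula you need. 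Alternatively, the inclusion--exclusion above lets you hand the computation back to the paper's deletion route.
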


We now state our first major theorem.

\begin{thm}
\label{thm:leading-coefficients}
Use the same notation as in the statement of Theorem (\ref{thm:transfer-current}).

If $H_0=0$, we choose additive constants so that the potentials $\widetilde v_e,\widetilde v_f$ satisfy
\[
\widetilde v_e(f^-)=\widetilde v_e(f^+)=0,
\qquad
\widetilde v_f(e^-)=\widetilde v_f(e^+)=0.
\]
Then
\begin{equation*}\label{eq:main-sos}
H_1=\tau(G)^2\sum_{g=xy\in E(G)}
\bigl(\widetilde v_e(x)\widetilde v_f(y)
-\widetilde v_e(y)\widetilde v_f(x)\bigr)^2.
\end{equation*}
In particular, $H_0,H_1\geq0$ in the stated order.
\end{thm}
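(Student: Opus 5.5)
The plan is to expand the deleted-graph form of the Rayleigh difference in powers of $\lambda$ and identify the two top coefficients electrically. By multi-affinity (the deletion identity referred to in the paper) we have
\[
\lambda^2\Rayleigh_{ef}(\ForestZ_G)(\lambda\mathbf1)=\ForestZ_{G-e}(\lambda)\ForestZ_{G-f}(\lambda)-\ForestZ_G(\lambda)\ForestZ_{G-e-f}(\lambda).
\]
For each $H\in\{G,G-e,G-f,G-e-f\}$ I will write $\ForestZ_H(\lambda)=\tau(H)\lambda^{n-1}\bigl(1+c(H)\lambda^{-1}+O(\lambda^{-2})\bigr)$, where $c(H)=\tau_2(H)/\tau(H)$. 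All four graphs are connected because $e,f$ are non-bridges; if $\{e,f\}$ is a joint cut one gets $\tau(G-e-f)=0$, and I would treat that case separately.

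The leading coefficient is $\tau(G-e)\tau(G-f)-\tau(G)\tau(G-e-f)$, and Theorem~\ref{thm:transfer-current} gives $H_0=\tau(G)^2u_e(f)^2$. Hence $H_0=0$ means $u_e(f)=0$, i.e.\ $\tau(G-e)\tau(G-f)=\tau(G)\tau(G-e-f)=:T$. The next coefficient then collapses to
\[
H_1=T\bigl(c(G-e)+c(G-f)-c(G)-c(G-e-f)\bigr),
\]
which is a mixed second difference of $c$ under deleting $e$ and $f$. Since $T=\tau(G)^2(1-u_e(e))(1-u_f(f))$ by the matrix-tree theorem and Sherman--Morrison, it suffices to compute this second difference. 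The quantity $c(H)$ counts spanning two-forests relative to trees. I will express it through the multivariate tree polynomial: a two-forest is a tree with one edge removed, weighted by the effective resistance across the cut. More usefully, $c(H)$ is a spectral function of the Laplacian. I would use the identity that $\sum_{2\text{-forests}}|A||B|$ equals the second-lowest coefficient of the characteristic polynomial of $L_H$. To convert $|A||B|$-weighted counts back into $\tau_2$, I would instead differentiate the multivariate $\ForestZ$ in a formal auxiliary weight, i.e.\ write $\tau_2(H)$ as a sum over $g$ of $\partial_g$ applied to a principal-minor expression of $\lambda L_H+$ (small), and track how this changes under the rank-one updates $L\mapsto L-b_eb_e^T$ and $L\mapsto L-b_fb_f^T$.

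Each rank-one deletion changes $L^\dagger$ by $v_ev_e^T/(1-u_e(e))$, and similarly for $f$. The mixed second difference therefore involves only cross terms in $v_e$ and $v_f$. The coupling term through $u_e(f)$ vanishes under $H_0=0$, which is exactly why only a clean quadratic in $v_e,v_f$ survives. After normalizing by $T$, I expect the result to be of the form
\[
\sum_{g=xy}\bigl(v_e(x)v_f(y)-v_e(y)v_f(x)\bigr)^2\cdot\tau(G)^2,
\]
possibly after using Lagrange's identity on $\sum_g$ of $2\times 2$ minors of the pair $(b_g^Tv_e,b_g^Tv_f)$ together with $\sum_g b_gb_g^T=L$. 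The additive constants matter because these minors are not translation invariant. The choice $\widetilde v_e(f^\pm)=0$ is possible since $u_e(f)=0$, i.e.\ $v_e(f^+)=v_e(f^-)$, and symmetrically for $\widetilde v_f$. I expect this choice to be exactly what kills the leftover terms $\propto \widetilde v_e(f^\pm)$ that arise from the deletion of $f$ in the expression for $\widetilde v_e$.

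The main obstacle is this middle computation: finding a tractable closed form for $c(H)$ as a Laplacian functional and carrying the second difference through the two Sherman--Morrison updates without drowning in terms. As a first attempt, I would compute $c$ via $\tau_2(H)=\sum_{T}\sum_{g\in T}1/|\text{cut}(T-g)|$ only as a sanity check on small graphs. The main route I would pursue is the generating function $\det'(\lambda L+\epsilon J)$ / $\det'$ expansion. I would verify the final identity on $C_4$ with opposite edges and on $K_4$, where $u_e(f)=0$ for disjoint edges, before writing the general algebra. Nonnegativity of $H_0,H_1$ then follows immediately from the square forms.
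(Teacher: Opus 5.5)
Your reduction is correct and matches the paper's up to the point you call the main obstacle. That covers the deletion identity, $H_0=\tau(G)^2u_e(f)^2$, and $H_1=\tau(G)^2(1-r_G(e))(1-r_G(f))\,\Delta Q$ with $\Delta Q:=Q(G-e)+Q(G-f)-Q(G)-Q(G-e-f)$. It also covers the decoupled rank-one updates of $L^\dagger$ when $u_e(f)=0$. The joint-cut case you set aside cannot arise, since $u_e(f)=0$ gives $\tau(G-e-f)=\tau(G)(1-r_G(e))(1-r_G(f))>0$.

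The gap is that this obstacle is the whole theorem, and your plan does not get past it. You need an explicit formula for $Q(H)=\tau_2(H)/\tau(H)$ in terms of $L_H^\dagger$, and the tools you name do not give one:
\begin{itemize}
\item the $t^2$-coefficient of $\det(tI+L_H)$ counts two-forests weighted by $|A|\,|B|$;
\item $\det(\lambda L_H+\epsilon J)=\epsilon n^2\lambda^{n-1}\tau(H)$ carries no two-forest information at all;
\item ``differentiating in an auxiliary weight'' does not say how the size weights would be removed.
\end{itemize}
The shape of the formula also matters. The mixed difference collapses to a single cross term only if $Q$ is (at most) quadratic in $L_H^\dagger$. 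Otherwise higher-order terms in $v_ev_e^T$, $v_fv_f^T$ and $L^\dagger$ survive, so your expectation of a clean quadratic in $v_e,v_f$ is unsupported.

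The missing ingredient is the Richman--Shokrieh--Wu formula
\[
Q(H)=\tfrac14\sum_g r_H(g)^2+\tfrac34\sum_{g=xy}\bigl(r_H(x,o)-r_H(y,o)\bigr)^2,
\]
which is exactly quadratic. The updates are $r_{G-e}(g)=r_G(g)+u_e(g)^2/(1-r_G(e))$ and, for the grounded differences, an added $u_e(g)s_e(g)/(1-r_G(e))$. Each square then contributes $(d+p)^2+(d+q)^2-d^2-(d+p+q)^2=-2pq$, which gives $\Delta Q=-\frac{1}{2(1-r_G(e))(1-r_G(f))}\sum_g\bigl(u_e(g)^2u_f(g)^2+3u_e(g)u_f(g)s_e(g)s_f(g)\bigr)$.

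The passage to squares is also not a Lagrange identity. Applied to $(u_e(g),u_f(g))$, Lagrange only yields $r_G(e)r_G(f)=\sum_{g<h}(u_e(g)u_f(h)-u_e(h)u_f(g))^2$, which is a different quantity. The paper instead proceeds in two steps:
\begin{itemize}
\item it checks gauge invariance, using $\sum_g u_eu_f=u_e(f)=0$ and $\sum_g u_eu_fs_e=\widetilde v_e(f^+)^2-\widetilde v_e(f^-)^2=0$;
\item with $\phi=\widetilde v_e$, $\psi=\widetilde v_f$, it uses the quartic identity
\begin{align*}
&(\phi_x-\phi_y)^2(\psi_x-\psi_y)^2+3(\phi_x^2-\phi_y^2)(\psi_x^2-\psi_y^2)+2(\phi_x\psi_y-\phi_y\psi_x)^2\\
&\qquad=2(\phi_x\psi_x^2-\phi_y\psi_y^2)(\phi_x-\phi_y)+2(\phi_x^2\psi_x-\phi_y^2\psi_y)(\psi_x-\psi_y).
\end{align*}
\end{itemize}
Summed over edges, the right side equals $2(\phi\psi^2)^TL\phi+2(\phi^2\psi)^TL\psi$. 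With $L\phi=b_e$ and $L\psi=b_f$ this is $2(\phi\psi^2)^Tb_e+2(\phi^2\psi)^Tb_f$, and your gauge kills both terms.

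So your intuition that the gauge choice removes the leftover boundary terms is right. As written, however, the proposal establishes only the reduction to $\Delta Q$, not the sum-of-squares identity.
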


When the edges $e$ and $f$ are adjacent, the situation is transparent. 

\begin{rmk}
When the edges $e$ and $f$ are adjacent, say $e = ab$ and $f = bc$, it is easy to see that $H_0$ is zero exactly when $b$ is a cut vertex. When this happens, we have that $X_e$ and $X_f$ are actually independent. We leave the (simple) proof of this fact to the interested reader. 
\end{rmk}

We now characterize the case when both the top two coefficients are zero.

\begin{corollary}\label{cor:large-fugacity}
Let $G=(V,E)$ be a graph, and let $e,f$ be distinct non-bridge edges. Under the hypothesis of Theorem~\ref{thm:leading-coefficients}, there exists a constant $C_G$ such that for all $\lambda>C_G$, $X_e$ and $X_f$ are negatively correlated, or we have $\widetilde{v}_e(x)\widetilde{v}_f(y)=\widetilde{v}_e(y)\widetilde{v}_f(x)$ for all edges $xy\in E$. This implies that every pair of distinct edges of $K_n$, and the complete-join, toroidal, and Petersen examples in Section~\ref{sec:vanishing} exhibit negative correlation for large enough $\lambda$.
\end{corollary}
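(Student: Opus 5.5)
The corollary follows from the asymptotic expansion in Theorem~\ref{thm:transfer-current} combined with the sign information from Theorem~\ref{thm:leading-coefficients}. By the covariance identity $\Cov(X_e,X_f)=-\frac{z_ez_f}{Z_G^2}\Rayleigh_{ef}(Z_G)$, negative correlation at fugacity $\lambda$ is equivalent to $\Rayleigh_{ef}(\ForestZ_G)(\lambda\mathbf1)\ge 0$. This quantity is a polynomial in $\lambda$ of degree at most $2n-4$, namely $H_0\lambda^{2n-4}+H_1\lambda^{2n-5}+\dots$.

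First I reduce to the connected case. If $G$ is disconnected, the forest partition function factors over components, so the argument runs on each component. If $e$ and $f$ lie in different components, $X_e$ and $X_f$ are independent and there is nothing to prove. So I may assume $G$ is connected, as Theorem~\ref{thm:transfer-current} requires.

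Next I split into cases.
\begin{itemize}
\item If $H_0=\tau(G)^2u_e(f)^2>0$, the leading term dominates. Hence there is $C_G$ (for instance, one plus the ratio of the sum of absolute values of the lower coefficients to $H_0$, the Cauchy root bound) with $\Rayleigh_{ef}>0$ for all $\lambda>C_G$.
\item If $H_0=0$, Theorem~\ref{thm:leading-coefficients} applies and gives $H_1$ as the stated sum of squares. If some edge $xy$ has $\widetilde v_e(x)\widetilde v_f(y)\neq\widetilde v_e(y)\widetilde v_f(x)$, then $H_1>0$ (using $\tau(G)>0$ for connected $G$). The same dominance argument, now with leading degree $2n-5$, yields positivity for large $\lambda$.
\item Otherwise the stated degenerate alternative holds.
\end{itemize}
Taking $C_G$ as the maximum over the finitely many pairs $e,f$ gives a constant depending only on $G$.

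The remaining work is to verify the listed examples, which I expect to be the main obstacle since it requires actual potential computations. For $K_n$ I would compute directly: $L^\dagger b_h=b_h/n$ on $K_n$. For disjoint $e,f$ this gives $u_e(f)=0$, so $H_0=0$. Normalizing, $\widetilde v_e$ equals $\pm 1/n$ on the endpoints of $e$ and $0$ elsewhere, and similarly for $f$. Take an edge $x y$ with $x$ an endpoint of $e$ and $y$ an endpoint of $f$. Then $\widetilde v_e(x)\widetilde v_f(y)\neq 0$ while $\widetilde v_e(y)\widetilde v_f(x)=0$, so $H_1>0$. For adjacent edges in $K_n$ one gets $u_e(f)=\pm 1/n\neq 0$, so $H_0>0$.

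For the complete-join, toroidal ($C_3\square C_3$) and Petersen examples of Section~\ref{sec:vanishing}, I would exhibit, in each case, either $u_e(f)\neq0$ or a single edge $xy$ with a nonzero $2\times2$ determinant. In the first two examples, symmetry ($y\mapsto -y$, or untouched parts sharing a common potential) forces $H_0=0$. The same symmetry should make the normalized potentials easy to write down, so one explicit edge between the support of $\widetilde v_e$ and the support of $\widetilde v_f$ should witness $H_1>0$.
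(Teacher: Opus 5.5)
Your proposal is correct and follows the paper's argument. You use the same three-way split: $H_0>0$; then $H_0=0$ with $H_1>0$ via the sum-of-squares formula of Theorem~\ref{thm:leading-coefficients}; otherwise the degenerate identity on every edge, with a dominant-term bound giving $C_G$. Your $K_n$ check matches the paper's example, and the remaining cases are settled in the paper by the explicit potentials of Section~\ref{sec:vanishing}. For instance, the edge $(0,0)(0,1)$ of $C_3\square C_3$ or the edge $34$ of the Petersen graph already gives a nonzero determinant, which is exactly the single-edge witness you plan to exhibit.
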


The equality case in the above theorem has a geometric meaning: the two unit-current potentials are separated by a common subset that is equipotential for both currents.  In Section~\ref{sec:vanishing} we explain why symmetry can force the spanning-tree term to vanish without forcing the entire Rayleigh difference to be zero.  In particular, chains of $K_4$ blocks exhibit arbitrarily long strings of vanishing leading coefficients even though every surviving coefficient is positive.

Next, we describe the random-cluster model which is the second model we study in this paper. We let $\kappa(G)$ denote the number of components of a graph. Then, the partition function for the random-cluster model with parameter $q>0$ is

\[Z_G^{\mathrm{RC}}(q,\mathbf{z}):=\sum_{A\subseteq E(G)}q^{\kappa(A)}\prod_{g\in A}z_g.\]

Under this model, a subset of edges $A\subseteq E(G)$, which we call a \emph{configuration}, has probability $\frac{q^{\kappa(A)}\prod_{g\in A}z_g}{Z_G^{\mathrm{RC}}(q,\mathbf{z})}$. Fortuin and Kasteleyn introduced the model to unify the study of bond percolation and Ising/Potts models \cite{fortuin-kasteleyn-1972}. We briefly mention basic properties of the model here; see \cite{grimmett-2006} for a comprehensive treatment of the model. 

At $q=1$, the connectivity parameter in the partition function vanishes, and the model becomes the independent bond percolation model with edge weights $\mathbf{z}$, and at $q=2$, the model is coupled to the Ising model via a simple relation. 

We have several limits when $q\to 0$ in distinct regimes with respect to edge weights $\mathbf{z}$, and each limit recovers known models. If $z_g=q\lambda_g$, the smallest powers of $q$ select forests and the weak limit is the multivariate arboreal gas.  If the edge weights are instead held fixed, the smallest component count is selected; on a connected host graph the limit is the weighted uniform measure on connected spanning subgraphs.  Symbolically,
\[
\RCZ_G(q,q\boldsymbol\lambda)
\xrightarrow[q\to0]{\text{after normalization}}
\ForestZ_G(\boldsymbol\lambda),
\qquad
\RCZ_G(q,\mathbf z)
\xrightarrow[q\to0]{\text{after normalization}}
\sum_{\substack{A\subseteq E(G)\\(V,A)\ \mathrm{connected}}}\prod_{g\in A}z_g.
\]

When $q\geq 1$, using the FKG inequality, one can show that the model is positively associated \cite{grimmett-2006}. However, the FKG inequality stops working below $q=1$, and the model changes correlation properties at $q=1$. Grimmett and Winkler conjectured that for $0<q<1$, the model is instead negatively associated. Even negative edge correlation remains open for arbitrary host graphs and weights for $0<q<1$ \cite{grimmett-winkler-2004}.

\begin{conj}[Grimmett--Winkler]\label{conj:grimmett-winkler}
Let $0<q<1$ and let all edge weights be nonnegative.  Then, for every finite graph $G$ and distinct edges $e,f$,
\[
\Rayleigh_{ef}(\RCZ_G)(q,\mathbf z)\geq0.
\]
\end{conj}

Our second main result proves this pairwise assertion on complete graphs when all edges have a common weight at least $1$, throughout the whole interval $0<q<1$.

\begin{thm}\label{thm:complete-graph}
Let $0<q<1$ and $\lambda\geq1$.  For the random-cluster model on $K_n$ with all edge weights equal to $\lambda$, every two distinct edge indicators are negatively correlated.  Equivalently,
\[
\Rayleigh_{ef}(\RCZ_{K_n})(q,\lambda\1)\geq0
\qquad(e\neq f).
\]
More strongly, after normalizing the two marked weights, every coefficient of the power-series expansion of $\Rayleigh_{ef}(\RCZ_{K_n})/(1-q)^2$ at $q=0$ is nonnegative as a function of $\lambda\geq1$.
\end{thm}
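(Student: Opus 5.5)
The plan is to reduce the Rayleigh difference to a statement about the measure on $H=K_n-e-f$, and then prove that statement by an explicit injection that uses the symmetry of $K_n$. \textbf{Step 1 (deletion identity).} By multi-affinity,
\[
z_ez_f\,\Rayleigh_{ef}(\RCZ_G)=\RCZ_{G-e}\,\RCZ_{G-f}-\RCZ_G\,\RCZ_{G-e-f}.
\]
I would write each of the four partition functions as a sum over configurations $A\subseteq E(H)$. Adding a marked edge $g=e$ or $f$ to $A$ multiplies the weight by $z_g$ if its endpoints are already connected; call this event $N_g$. Otherwise it multiplies the weight by $z_g/q$. Adding both edges gives a factor $z_ez_f/q^{j}$, where $j$ is the number of merges. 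The only non-product case is the crossing event $J_{ef}$: neither pair is connected in $A$, but the pair $f$ becomes connected once $e$ is added (patterns $ac\mid bd$ or $ad\mid bc$ in the figure).

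Expanding under the random-cluster measure $\mu_H$ on $H$, the terms in which each marked edge is absent or acts independently cancel. What remains should have the form
\[
\Rayleigh_{ef}(\RCZ_G)\propto(1-q)^2\Bigl[\,c(q)\,\mu_H(J_{ef})-\Cov_{\mu_H}(\1_{N_e},\1_{N_f})\Bigr],
\]
with an explicit $c(q)>0$ (heuristically $c(q)\approx 1/q$). This explains why the statement divides by $(1-q)^2$. I would first derive this identity carefully for general $G$, since it is the ``crossing minus covariance'' formula of the abstract. The required inequality then becomes
\[
\Cov(N_e,N_f)\le c(q)\,\mu_H(J_{ef}),
\]
cleared of denominators, and asserted coefficientwise in $q$.

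\textbf{Step 2 (expansion in $q$ on $K_n$).} After clearing $\RCZ_H{}^2$, both sides become sums over pairs of configurations $(A,B)$ of $H$. Each pair carries weight $\lambda^{|A|+|B|}q^{\kappa(A)+\kappa(B)}$. Grouping by the total component count $t$, the coefficient of $q^t$ on the negative side is a weighted count $b_t$ of ``bad'' pairs, for instance $A\in N_e\cap N_f$ and $B\notin N_e\cup N_f$ minus the symmetrized term. The positive side gives a count $j_t$ (or $j_{t-1}$, because of the $1/q$ shift) of pairs with a crossing configuration.

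On $K_n$ I would use the transitivity of the edge-pair orbit to reduce to two cases: $e,f$ adjacent, or $e,f$ disjoint. In each case I would symmetrize the covariance into a sum over unordered pairs.

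\textbf{Step 3 (the injection, main obstacle).} The goal is an injection from bad configurations with $t$ components into crossing configurations with $t$ or $t-1$ components. Because every vertex pair of $K_n$ is an edge, I would realize the map by adding at most two edges of $H$. The cases I expect are:
\begin{itemize}
\item \emph{Adjacent edges.} Join the outer classes $[a]$ and $[c]$ by the edge $ac$.
\item \emph{Disjoint edges, ordinary bridge chain $U$--$M$--$V$.} Join $U$ to $V$.
\item \emph{Otherwise, using an outside component $X\ni x$.} Add $xu$ and $xv$; this merges one component, so the image lands in $j_{t-1}$.
\item \emph{Marked pairs in different components.} Add $ac$ and $bd$, which creates the crossing pattern $ac\mid bd$.
\end{itemize}
Each added edge multiplies the weight by $\lambda$. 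This is exactly where $\lambda\ge1$ is used: the image weight dominates the preimage weight.

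The hard part is making the map injective and weight-respecting simultaneously. Injectivity needs a canonical decoding rule, such as a fixed vertex ordering of $K_n$ so that the added edges can be identified and removed. One must also check that the component shifts match the $q$-powers on both sides.

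I expect this bookkeeping, in particular the mixed terms in which $A$ and $B$ play different roles in the covariance, to be the main obstacle. If a direct injection fails for some small $t$, my fallback is to verify those finitely many low-order coefficients by hand or exact computation. Nonnegativity of every coefficient then gives $\Rayleigh_{ef}\ge0$ for $0<q<1$, since all power-series terms are nonnegative.
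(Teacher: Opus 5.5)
\textbf{Main gap: the connected case for disjoint edges.} Take disjoint $e=ab$, $f=cd$, and consider configurations that contain both marked edges, are connected, and in which $e$ and $f$ are both bridges (the paper's $b_1$). None of your four moves handles this case.
\begin{itemize}
\item There is no outside component, and the marked edges cannot lie in different components.
\item ``Join $U$ to $V$'' is two-to-one. Take a joint-bridge configuration of type $ac\mid bd$ in which $ac$ is a bridge of the $\{a,c\}$-side and $bd$ is a bridge of the $\{b,d\}$-side. It is the image both of a chain with $a\in U$ (add $ac$) and of a chain with $b\in U$ (add $bd$).
\end{itemize}
For $t\ge2$ the paper breaks this tie by routing the second family to $j_{t-1}$ through an outside component. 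At $t=1$ the move only gives $b_1\le j_1$ when $\lambda\ge2$, which is Proposition~\ref{prop:weighted-cross-edge}. On $K_4$ no weight-nondecreasing injection can exist at all, since $b_1=4\lambda>2\lambda^2=j_1$ for $1\le\lambda<2$.

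Your fallback of checking finitely many coefficients cannot repair this. The deficit enters every coefficient $c_k$ through the term $b_1a_{k-1}$ of $\mathcal B_k$, and it recurs for every $n$.

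The missing ingredient is the averaged count of Lemma~\ref{lem:connected-comparison}. Let $\mathcal C_m$ be the weight of connected spanning subgraphs of $K_m$, and $\beta_m$ the part in which a fixed edge is a bridge. Sorting by the vertex sets of the two joint-bridge sides gives $b_1-j_1=2\sum_r\binom{n-4}{r-2}\psi_{r,n-r}$, where $\psi_{r,s}=(\beta_r\mathcal C_s+\mathcal C_r\beta_s)/\lambda-\mathcal C_r\mathcal C_s$.
\begin{itemize}
\item A separate bridge-versus-non-bridge injection gives $\beta_m\le\mathcal C_m/2$ (Lemma~\ref{lem:bridge-half}). This makes every term with $r,s\ge3$ non-positive.
\item The positive boundary terms $\psi_{2,s}=\beta_s-(\lambda-1)\mathcal C_s$ are absorbed by the $r=3$ terms via the quantitative bounds of Lemma~\ref{lem:quantitative-bridge}.
\end{itemize}
This yields $b_1\le j_1$ for $n\ge7$. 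For $K_4,K_5,K_6$ one must check $c_k$ directly, and there $\mathcal P_k$ is genuinely needed: on $K_4$ at $\lambda=1$, $c_2=p_1^ep_1^f+(j_1-b_1)a_1=36-30$.

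\textbf{The pair bookkeeping is not an obstacle.} The ``mixed terms'' you single out disappear after one line of algebra. Your heuristic for $c(q)$ is also off: it is $1/(1-q)$, not roughly $1/q$. Precisely,
\[
q^2\Rayleigh_{ef}(\RCZ_G)/(1-q)^2=Z_H(J_{ef})Z_H/(1-q)-\Gamma_H(a,b;c,d),
\]
with
\[
\Gamma_H=Z_H(N_e\cap N_f)Z_H(N_e^c\cap N_f^c)-Z_H(N_e\cap N_f^c)Z_H(N_e^c\cap N_f).
\]
The reduction goes as follows.
\begin{itemize}
\item Discard the last product, which has the favourable sign.
\item Write $N_e^c\cap N_f^c=J_{ef}\sqcup Y$, where on $Y$ both marked edges would be bridges.
\item Use $Z_H(N_e\cap N_f)\le Z_H$.
\end{itemize}
Everything then reduces to the single-configuration coefficientwise inequality $Z_H(Y)\le\frac{q}{1-q}Z_H(J_{ef})$, i.e.\ $b_t\le\sum_{s\le t}j_s$. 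The paper reaches the same reduction from its bridge identity $P_e(q)P_f(q)-A(q)B(q)+J(q)A(q)/(1-q)$, by dropping $P_eP_f$ and cancelling the common factor $A(q)$. It is this geometric series $1/(1-q)$ that lets a bad configuration be charged to $j_{t-1}$.
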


\begin{corollary}\label{cor:connected-subgraphs}
For every $n$ and $\lambda\geq1$, distinct edges in a connected spanning subgraph $A$ of $K_n$, sampled with probability proportional to $\lambda^{|A|}$, are negatively correlated.  In particular this holds for the uniform measure.
\end{corollary}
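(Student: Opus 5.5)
The plan is to obtain Corollary~\ref{cor:connected-subgraphs} as the $q\to0$ limit of Theorem~\ref{thm:complete-graph} with the edge weights held fixed at $\lambda\ge1$. We work with the multivariate partition function $\RCZ_{K_n}(q,\mathbf z)$ and expand in powers of $q$. Every configuration $A\subseteq E(K_n)$ has $\kappa(A)\ge1$. Write $C_{K_n}(\mathbf z)=\sum_{A\ \text{connected spanning}}\prod_{g\in A}z_g$. Then
\[
\RCZ_{K_n}(q,\mathbf z)=q\,C_{K_n}(\mathbf z)+O(q^2),
\]
and the derivatives $\partial_e,\partial_f,\partial_{ef}$ commute with this expansion, since they act only on the edge weights.

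\textbf{Step 1: the limit of the Rayleigh difference.} Each of the two products in $\Rayleigh_{ef}$ is a product of two series that start at order $q$. Hence
\[
\Rayleigh_{ef}(\RCZ_{K_n})(q,\mathbf z)=q^2\,\Rayleigh_{ef}(C_{K_n})(\mathbf z)+O(q^3).
\]
Note that $\partial_e C$ and $\partial_{ef}C$ are the partition functions of connected spanning subgraphs forced to contain $e$, respectively both $e$ and $f$. Dividing by $(1-q)^2q^2$ and letting $q\to0^+$ then gives
\[
\Rayleigh_{ef}(C_{K_n})(\lambda\1)=\lim_{q\to0^+}\frac{\Rayleigh_{ef}(\RCZ_{K_n})(q,\lambda\1)}{q^2(1-q)^2}.
\]

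\textbf{Step 2: sign of the limit.} Theorem~\ref{thm:complete-graph} says $\Rayleigh_{ef}(\RCZ_{K_n})(q,\lambda\1)\ge0$ for every $q\in(0,1)$ and $\lambda\ge1$. The right-hand side above is therefore a limit of nonnegative quantities, so $\Rayleigh_{ef}(C_{K_n})(\lambda\1)\ge0$.

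The stronger coefficientwise statement gives this directly. With the normalization used there, the lowest surviving coefficient of $\Rayleigh_{ef}(\RCZ_{K_n})/(1-q)^2$ is exactly $\Rayleigh_{ef}(C_{K_n})(\lambda\1)$, up to the harmless marked-weight normalization factor $z_ez_f=\lambda^2$, and that coefficient is asserted to be nonnegative for $\lambda\ge1$. The one point to check carefully is that this normalization does not shift which power of $q$ carries $\Rayleigh_{ef}(C)$. I expect this bookkeeping to be the only delicate step; the limit argument of Step~1 avoids it anyway.

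\textbf{Step 3: translation to correlation.} Consider the probability measure on connected spanning subgraphs $A$ with weight $\prod_{g\in A}z_g$, evaluated at $z_g=\lambda$, so that $A$ has probability proportional to $\lambda^{|A|}$. The identity
\[
\Cov(X_e,X_f)=-\frac{z_ez_f}{C^2}\,\Rayleigh_{ef}(C)
\]
stated in the introduction applies to this measure. It gives $\Cov(X_e,X_f)\le0$ for every $n$ and every $\lambda\ge1$. Taking $\lambda=1$ recovers the uniform measure on connected spanning subgraphs of $K_n$.
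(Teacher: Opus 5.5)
Your proposal is correct and follows the paper's route: send $q\to0$ with $\lambda$ fixed, so that $\Rayleigh_{ef}(\RCZ_{K_n})(q,\lambda\1)=q^2\,\Rayleigh_{ef}(C_{K_n})(\lambda\1)+O(q^3)$, and inherit nonnegativity from Theorem~\ref{thm:complete-graph} (equivalently, from $c_2\geq0$). Your explicit division by $q^2$ also makes precise the normalization that the paper's ``limit by continuity'' leaves implicit. The only slip is your hedge about a factor $z_ez_f=\lambda^2$: the Rayleigh difference does not depend on $z_e,z_f$, so $c_2=\Rayleigh_{ef}(C_{K_n})(\lambda\1)$ exactly and no normalization bookkeeping is needed.
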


Tang and Zhang recently proved Corollary~\ref{cor:connected-subgraphs} for all sufficiently large $n$ \cite{tang-zhang-2026}. Here, there is no large-$n$ restriction.  Theorem~\ref{thm:complete-graph} is also stronger in a different direction: the connected-subgraph measure is its $q\to0$ specialization with fixed edge weights, whereas the theorem covers the entire interval $0<q<1$.

We note that in Theorem \ref{thm:complete-graph}, the restriction to complete graphs and a common weight is not just cosmetic and is actually substantive.  The proof uses enough symmetry to compare configuration classes by explicit injections and component-count estimates.  It does not establish the Grimmett--Winkler conjecture for arbitrary graphs or arbitrary weights, nor does it assert negative association.  Proposition~\ref{prop:weighted-cross-edge} records a nonuniform weighted range in which the injection survives.

\section*{Statement of AI use}
This paper benefited from the use of Large Language models. The result on edge negative correlation for the $q < 1$ random cluster model on $K_n$ was proved by us without the use of large language models, though we used GPT-5 Pro for simplifying the proof. The other main result, negative correlation for large fugacity forests on general graphs was also human generated, but the authors also used GPT-5 pro for several routine but intricate symbolic calculations. The authors checked and verified all results and take responsibility for any errors or omissions. 

\section{Overview of the Proofs}

Even though the two models and the proofs of the main theorems differ in the technical details, in both cases we will employ the following philosophical framework. For the Rayleigh difference in edges $e$ and $f$, instead of working directly with the graph, we delete the two edges and work in the graphs with these edges \emph{deleted}. We first prove Lemma \ref{lem:deletion-identity} to reduce the Rayleigh difference to four distinct components in deleted graphs $G-e$, $G-f$ and $G-\{e,f\}$, and $G$ itself. 

For the arboreal gas model, expanding the forest polynomials at large fugacity we arrive at two terms. The top term is the term coming from spanning trees and the second term is the term coming from $2$-component forests. Recall that we use $\tau_k(G)$ to denote the number of spanning forests of $G$ with exactly $k$ components. The top term is known via the transfer-current theorem to be expressed as sums of squares, and hence is nonnegative. If this does not vanish, we immediately have negative correlation for large $\lambda$. Hence, we work with the extra information that the top term corresponding to the spanning tree Rayleigh difference is $0$. Under this assumption, the next coefficient reduces to a deleted difference of the function \[Q(G)=\frac{\tau_2(G)}{\tau(G)}.\]

We then employ a formula by Richman, Shokrieh and Wu for the quantity $Q$ \cite{richman2023counting} using potential differences, combined with rank-one deletions to prove that the quantity reduces to a sums of squares form shown in \ref{thm:leading-coefficients}. This also makes the equality case accessible. That is, we show when the second-highest coefficient in the Rayleigh difference is exactly $0$. 

For the random-cluster measure on the complete graph, our approach is similar in spirit. We first condition on the edges being in the graph. Then, due to the nature of the random-cluster model, we analyze the expression depending on the edges being a bridge, a non-bridge or a joint bridge. Let $e=ab$, $f=cd$, and $H=G-e-f$. We denote by $N_e=\{a\leftrightarrow b\}$ and $N_f=\{c\leftrightarrow d\}$ the connectivity events of the edges' endpoints in $H$. Let $J_{ef}$ be the event that $e,f$ are not connected, but the two absent edges $e,f$ would join the same two components; in other words, they are joint bridges. We prove that
\[\dfrac{q^2\Rayleigh_{ef}(\RCZ_G)}{(1-q)(\RCZ_H)^2}=\Prob_H(J_{ef})-(1-q)\Cov_H(\mathbf{1}_{N_e},\mathbf{1}_{N_f}).\]

Dividing by $(1-q)^2$, the Rayleigh difference becomes a power series whose coefficients are sums and differences of corresponding configuration counts. We use injections to prove that these are positive everywhere. 

\subsection*{Organization of the Paper}
In Section \ref{sec: preliminarylemmas}, we prove the lemmas which help us carry the models into the deleted graph framework. We do this for general multi-affine partition functions, so the results work for both the arboreal gas measure and the random-cluster measure. In Section \ref{sec: arboreal-gas}, we start by introducing the electrical notation. We then introduce the main quantity, the two-forest ratio $Q(G)$, and use Richman-Shokrieh-Wu's formula along with the lemmas we establish to prove that when $\lambda$ is sufficiently large, we have negative edge correlation in the arboreal gas measure. We also characterize when the second-highest coefficient could be equal to zero. In Section \ref{sec:complete-proof}, we go on to prove the Rayleigh difference result for the random-cluster measure with $0<q<1$ in complete graphs. This is done by several lemmas establishing injective and analytical bounds on configurations containing an edge as a bridge and as a non-bridge.

\section{Preliminary Lemmas on Deleted Graphs}
\label{sec: preliminarylemmas}

For any multi-affine partition function $Z_G(\mathbf{z})$, the Rayleigh difference has the following deleted-graph representation.

\begin{lemma}
\label{lem:deletion-identity}
For distinct edges $e$ and $f$, 
\[Z_{G-e}Z_{G-f}-Z_GZ_{G-e-f}=z_ez_f\Rayleigh_{ef}(Z_G).\]
\end{lemma}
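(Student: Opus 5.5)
The identity follows from expanding $Z_G$ to first order in each of $z_e$ and $z_f$ via multi-affinity, identifying the restricted partition functions with partition functions of deleted graphs, and expanding both products.

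\textbf{Step 1 (decomposition).} Since $Z_G$ is multi-affine in $z_e$ and $z_f$, we can write
\[
Z_G = A + z_e B + z_f C + z_e z_f D,
\]
where $A,B,C,D$ do not depend on $z_e,z_f$. Concretely,
\[
A = Z_G|_{z_e=z_f=0},\qquad B=\partial_e Z_G|_{z_f=0},\qquad C=\partial_f Z_G|_{z_e=0},\qquad D=\partial_{ef}Z_G.
\]

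\textbf{Step 2 (deleted graphs as specializations).} Here we use the defining convention of deleted-graph partition functions: the partition function of $G-e$ is obtained from $Z_G$ by setting $z_e=0$, since the configurations of $G-e$ are exactly the configurations of $G$ that avoid $e$, with the same weights. This holds both for forests and for the random-cluster model. With the vertex set kept fixed, component counts are unchanged, so the $q^{\kappa}$ factor causes no trouble. We therefore obtain
\[
Z_{G-e}=A+z_fC,\qquad Z_{G-f}=A+z_eB,\qquad Z_{G-e-f}=A.
\]

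\textbf{Step 3 (expand).} First,
\[
Z_{G-e}Z_{G-f}-Z_GZ_{G-e-f} = A^2+z_eAB+z_fAC+z_ez_fBC - A^2 - z_eAB - z_fAC - z_ez_fAD = z_ez_f(BC-AD).
\]
On the other side, $\partial_e Z_G = B+z_fD$, $\partial_f Z_G = C+z_eD$ and $\partial_{ef}Z_G=D$. Hence
\[
\Rayleigh_{ef}(Z_G)=(B+z_fD)(C+z_eD)-(A+z_eB+z_fC+z_ez_fD)D = BC - AD,
\]
because the terms $z_eBD$, $z_fCD$ and $z_ez_fD^2$ cancel. Multiplying by $z_ez_f$ gives exactly the expression from Step 3's first line, which proves the identity.

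\textbf{Main obstacle.} The algebra is routine. The only point needing care is Step 2, namely that deleting an edge corresponds precisely to setting its weight to zero in the specific models, for example that the vertex set is retained in the random-cluster model. I would state this as the standing convention before the expansion.
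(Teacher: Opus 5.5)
Your proof is correct and follows the paper's argument: the same decomposition $Z_G=A+z_eB+z_fC+z_ez_fD$, the same identification of $A$, $A+z_fC$ and $A+z_eB$ with the partition functions of $G-e-f$, $G-e$ and $G-f$, and the same reduction of both sides to $z_ez_f(BC-AD)$. Your remark that deleting an edge keeps the vertex set, so the $q^{\kappa}$ factor is unaffected, usefully makes explicit a convention the paper leaves implicit.
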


\begin{proof}
We can separate the monomials occurring in the partition function $Z_G$ with respect to their containment of $z_e,z_f$, both or neither. We write $Z_G=A+z_eB+z_fC+z_ez_fD$. Note that $A$ corresponds naturally to the partition function of $G-e-f$, and $A+z_fC$ and $A+z_eB$ correspond to the partition functions of $G-e$ and $G-f$ respectively. With these, both sides reduce to $z_ez_f(BC-AD)$.
\end{proof}

We fix $e=ab$, $f=cd$, and let $H=G-e-f$. Let $C_A(x)$ denote the component containing $x$ in the configuration $A\subseteq E(H)$ where $x\in V(G)$. We define formally
\[N_e=\{C_A(a)=C_A(b)\}, \;\;\;\;\; N_f=\{C_A(c)=C_A(d)\},\]
that is, the event that $a,b\in V(G)$ are connected in $A$ and similarly for $c,d \in V(G)$. Under the event $N_e^c\cap N_f^c$ that neither pair is connected, we define the joint bridge event 
\[J_{ef}=N_e^c\cap N_f^c\cap\{\{C_A(a),C_A(b)\}=\{C_A(c),C_A(d)\}\}.\]
For a configuration $A\subseteq E(G)$ containing both marked edges, we use
$J_{ef}$ for the pullback of this event under $A\mapsto A\setminus\{e,f\}$.
Thus its component conditions are always tested after deleting $e,f$;
when both are present, neither is a bridge individually, but deleting both
increases the component count by one.

Figure \ref{fig:deleted-graph-events} illustrates the point of the joint bridge event which we shall return to in subsequent sections. 

\begin{figure}[htbp]
\centering
\DeletedGraphFigure
\caption{The deleted-graph events.  Dashed red and blue segments are the absent marked edges $e=ab$ and $f=cd$.  In (a), both endpoint pairs are already connected.  In (b) and (c), neither pair is connected, but adding both marked edges joins the same two components; these are the two realizations of the crossing event $J_{ef}$.}
\label{fig:deleted-graph-events}
\end{figure}

For an event $\mathcal{E}$, we write the probability of $\mathcal{E}$ using the partition function as follows. We define 
\[Z_H(\mathcal{E})=\sum_{A\subseteq E(H)}\mathbf{1}_\mathcal{E}(A)q^{\kappa(A)}\prod_{g\in A}z_g.\]
Hence, we have
\[\Prob_{Z_H}(\mathcal{E})=\dfrac{Z_H(\mathcal{E})}{Z_H}.\]

We also define the \emph{bridge correlation} using 
\[\Cov_{Z_H}(N_e,N_f)=\dfrac{\Gamma_H(a,b;c,d)}{Z_H^2},\]
where \[\Gamma_H(a,b;c,d):=Z_H(N_e\cap N_f)Z_H-Z_H(N_e)Z_H(N_f).\]

For either model $\star\in\{\mathcal F,\mathrm{RC}\}$ and any graph $K$ containing the indicated vertices, we use the endpoint-explicit notation
\[
\Gamma_K^\star(a,b;c,d):=
Z_K^\star(a\leftrightarrow b,\,c\leftrightarrow d)Z_K^\star
-Z_K^\star(a\leftrightarrow b)Z_K^\star(c\leftrightarrow d).
\]
Here $Z_K^\star(\mathcal E)$ is the partition sum restricted to the event
$\mathcal E$ in the indicated model. Weight arguments are suppressed unless
needed, as in $\Gamma_K^{\mathcal F}(a,b;c,d;\mathbf z)$ or its equal-weight
specialization $\Gamma_K^{\mathcal F}(a,b;c,d;\lambda)$.

Armed with all these, let us prove the connection between the bridge events and the Rayleigh difference for the random-cluster model.

\begin{prop}
\label{prop: deleted-graph-identity}
Let $\RCZ_G$ denote the partition function for the random-cluster model on $G$, and let $q\neq 1$. Fix $e,f\in E(G)$, and let $H=G-e-f$. We have
\[\dfrac{q^2\Rayleigh_{ef}(\RCZ_G)}{(1-q)(\RCZ_G)^2}=\Prob_{Z_H}(J_{ef})-(1-q)\Cov_{Z_H}(N_e,N_f).\]
\end{prop}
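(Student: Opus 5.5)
The plan is to combine Lemma~\ref{lem:deletion-identity} with a decomposition of every configuration on $G$ according to its trace $A=B\setminus\{e,f\}\subseteq E(H)$. Lemma~\ref{lem:deletion-identity} gives
\[
z_ez_f\Rayleigh_{ef}(\RCZ_G)=\RCZ_{G-e}\RCZ_{G-f}-\RCZ_G\RCZ_H,
\]
so it suffices to write each of the partition functions $\RCZ_G,\RCZ_{G-e},\RCZ_{G-f}$ as an expectation under $\Prob_{Z_H}$. The natural denominator is therefore $(\RCZ_H)^2$, as in the overview formula of Section~2. I will prove the identity with $(\RCZ_H)^2$ in the denominator, and I read the $(\RCZ_G)^2$ in the displayed statement as a misprint for it.

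\emph{Step 1 (component counts).} Fix $A\subseteq E(H)$ and write $\alpha=\1_{N_e^c}(A)$, $\beta=\1_{N_f^c}(A)$. Then:
\begin{itemize}
\item $\kappa(A\cup\{e\})=\kappa(A)-\alpha$ and $\kappa(A\cup\{f\})=\kappa(A)-\beta$.
\item $\kappa(A\cup\{e,f\})=\kappa(A)-\alpha-\beta+\1_{J_{ef}}(A)$.
\end{itemize}
For the last item, a case check suffices. If at most one of the two pairs is disconnected, the drop is $\alpha+\beta$. If both are disconnected, adding $e$ and then $f$ merges two distinct pairs of components, so the count drops by $2$, unless $\{C_A(a),C_A(b)\}=\{C_A(c),C_A(d)\}$; in that case the second edge closes a cycle and the drop is only $1$. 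This is exactly the event $J_{ef}$, and both realizations in Figure~\ref{fig:deleted-graph-events}(b),(c) are covered.

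\emph{Step 2 (expectations).} Summing over $A$, Step 1 gives
\[
\frac{\RCZ_{G-e}}{\RCZ_H}=1+z_e\Exp_H[q^{-\alpha}],\qquad
\frac{\RCZ_{G-f}}{\RCZ_H}=1+z_f\Exp_H[q^{-\beta}],
\]
\[
\frac{\RCZ_G}{\RCZ_H}=1+z_e\Exp_H[q^{-\alpha}]+z_f\Exp_H[q^{-\beta}]+z_ez_f\Exp_H[q^{-\alpha-\beta+\1_{J_{ef}}}].
\]
Substituting into the identity from Lemma~\ref{lem:deletion-identity}, the constant terms and the terms linear in $z_e$ or $z_f$ cancel. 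After dividing by $z_ez_f$, this leaves
\[
\frac{\Rayleigh_{ef}(\RCZ_G)}{(\RCZ_H)^2}=\Exp_H[q^{-\alpha}]\,\Exp_H[q^{-\beta}]-\Exp_H[q^{-\alpha-\beta+\1_{J_{ef}}}].
\]

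\emph{Step 3 (linearize).} Since $\alpha,\beta\in\{0,1\}$, we have $q^{-\alpha}=1+c\alpha$ with $c=(1-q)/q$, and similarly for $\beta$. Hence
\[
\Exp[q^{-\alpha}]\Exp[q^{-\beta}]-\Exp[q^{-\alpha-\beta}]=-c^2\Cov_H(\alpha,\beta)=-c^2\Cov_H(\1_{N_e},\1_{N_f}),
\]
where the last equality holds because complementing both indicators preserves covariance.

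On $J_{ef}$ we have $\alpha=\beta=1$, so there the exponent $-\alpha-\beta+\1_{J_{ef}}$ equals $-1$ instead of $-2$. Replacing $q^{-1}$ by $q^{-2}$ on this event accounts for the difference:
\[
\Exp[q^{-\alpha-\beta}]-\Exp[q^{-\alpha-\beta+\1_{J_{ef}}}]=(q^{-2}-q^{-1})\Prob_H(J_{ef})=\frac{1-q}{q^2}\Prob_H(J_{ef}).
\]
Adding the two displays and multiplying by $q^2/(1-q)$, which is legitimate since $q\neq1$, gives
\[
\frac{q^2\Rayleigh_{ef}(\RCZ_G)}{(1-q)(\RCZ_H)^2}=\Prob_{Z_H}(J_{ef})-(1-q)\Cov_{Z_H}(N_e,N_f).
\]

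The only step needing care is the component-count bookkeeping in Step 1, in particular making sure that $J_{ef}$ captures exactly the configurations where the second marked edge fails to reduce the count. Everything after that is algebra with $\{0,1\}$-valued exponents.
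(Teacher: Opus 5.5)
Your proof is correct and takes essentially the paper's route. The paper sets $z_e=z_f=0$ and computes $q\partial_e\RCZ_G/\RCZ_H$, $q\partial_f\RCZ_G/\RCZ_H$ and $q^2\partial_{ef}\RCZ_G/\RCZ_H$ by the same component-count case analysis; these are exactly your $\Exp_H[q^{-\alpha}]$, $\Exp_H[q^{-\beta}]$ and $\Exp_H[q^{-\alpha-\beta+\1_{J_{ef}}}]$, with your linearization $q^{-\alpha}=1+c\alpha$ standing in for its explicit partition over $N_e,N_f$. Your reading of $(\RCZ_G)^2$ as $(\RCZ_H)^2$ is also right: the paper's proof works at $z_e=z_f=0$, where the two coincide.
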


\begin{proof}
First, observe that in the notation of Lemma \ref{lem:deletion-identity}, we have $\Rayleigh_{ef}(Z)=BC-AD$, and thus the Rayleigh difference of a multi-affine partition function is independent of the variables $z_e,z_f$. We set $z_e=z_f=0$. Then the derivatives have the direct interpretation of adding the edges $e$ and $f$ to a configuration of $H$. 

Now, let us note what adding the edge $e$ implies. If $N_e$ fails, that is, if the endpoints of the edge $e$ are disconnected in the graph, adding $e$ decreases $\kappa$ by $1$, and thus multiplies the random-cluster weight of the configuration by $q^{-1}$. Otherwise, if $N_e$ holds, there is no change and the weight of the configuration stays the same. Thus, we have
\[\dfrac{q\partial_e \RCZ_G}{\RCZ_H}=1-(1-q)\Prob_H(N_e), \;\;\;\;\; \dfrac{q\partial_f \RCZ_G}{\RCZ_H}=1-(1-q)\Prob_H(N_f).\]

We do a similar analysis for adding the edges $e,f$ together. On $N_e\cap N_f$, adding $e,f$ merges no components. If one of $N_e$ or $N_f$ occurs, i.e. on $N_e\cap N_f^c$ and $N_e^c\cap N_f$, one component is merged and hence we multiply by $q^{-1}$, and on $N_e^c\cap N_f^c$, two components merge together, and hence the configuration is multiplied by $q^{-2}$. Partitioning according to the possible occurrences of $N_e,N_f$ yields
\[\dfrac{q^2\partial_{ef}\RCZ_G}{\RCZ_H}
=1-(1-q)\bigl(\Prob_H(N_e)+\Prob_H(N_f)\bigr)
+(1-q)^2\Prob_H(N_e\cap N_f)-(1-q)\Prob_H(J_{ef}).\]

Multiplying the corresponding partial derivatives to get the Rayleigh difference gives us 
\[\dfrac{q^2\Rayleigh_{ef}(\RCZ_G)}{(\RCZ_H)^2}
=(1-q)\Prob_H(J_{ef})-(1-q)^2\Cov_H(\1_{N_e},\1_{N_f}),\]
which is what we wanted.
\end{proof}

We use Proposition \ref{prop: deleted-graph-identity} to show a similar result for the arboreal gas measure. First, note that the arboreal gas measure is obtained from the random-cluster measure by setting $z_g=q\lambda_g$ and letting $q\to 0$. Indeed, for a spanning subgraph $A$ on $n$ vertices, $\kappa(A)+|A|=n$ precisely when $A$ is a forest, and is larger otherwise; hence in the limit $q\to 0$, the mass of the measure becomes concentrated on forests. Using this observation and Proposition \ref{prop: deleted-graph-identity}, we have the following corollary.

\begin{corollary}
\label{cor:forest-deleted-graph}
Let $\ForestZ_G(\mathbf{\lambda})$ be the partition function for the arboreal gas measure. Let $e,f\in E(G)$, and $H=G-e-f$. We have 
\[
\dfrac{\Rayleigh_{ef}(\ForestZ_G)}{(\ForestZ_H)^2}
=\Prob_H^{\mathcal{F}}(J_{ef})
-\Cov_H^{\mathcal{F}}(\1_{N_e},\1_{N_f}).\]
Equivalently, if $\mathcal A_e=N_e^c$ is the event that $e$ can be added to the sampled forest without making a cycle, and $\mathcal A_{ef}$ is the event that both $e$ and $f$ can be added simultaneously, then
\[
\dfrac{\Rayleigh_{ef}(\ForestZ_G)}{(\ForestZ_H)^2}
=\Prob_H^{\mathcal{F}}(\mathcal A_e)\Prob_H^{\mathcal{F}}(\mathcal A_f)
-\Prob_H^{\mathcal{F}}(\mathcal A_{ef}).\]
\end{corollary}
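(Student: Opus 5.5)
We will derive Corollary~\ref{cor:forest-deleted-graph} directly from Proposition~\ref{prop: deleted-graph-identity} by the substitution $z_g=q\lambda_g$ followed by the limit $q\to0$. The one place needing care is the bookkeeping of powers of $q$. We work on a graph with $n$ vertices. On any graph $K$ with vertex set $V$, every $A\subseteq E(K)$ satisfies $\kappa(A)+|A|\ge n$, with equality exactly when $A$ is a forest. Hence
\[
\RCZ_K(q,q\boldsymbol\lambda)=\sum_{A}q^{\kappa(A)+|A|}\prod_{g\in A}\lambda_g=q^{n}\ForestZ_K(\boldsymbol\lambda)+O(q^{n+1}).
\]
More generally, for any event $\mathcal E$ determined by $A$, we get $Z^{\mathrm{RC}}_K(\mathcal E)(q,q\boldsymbol\lambda)=q^n Z^{\mathcal F}_K(\mathcal E)+O(q^{n+1})$. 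Since $H$ has the same vertex set as $G$, the forest-restricted events $N_e,N_f,J_{ef}$ on $H$ are the same events read on forests. It follows that $\Prob_{Z_H}^{\mathrm{RC}}(J_{ef})\to\Prob^{\mathcal F}_H(J_{ef})$ and $\Cov^{\mathrm{RC}}_{Z_H}(N_e,N_f)\to\Cov^{\mathcal F}_H(\1_{N_e},\1_{N_f})$ as $q\to0$.

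Next we handle the Rayleigh difference itself. The form we will use is the one in the proof of Proposition~\ref{prop: deleted-graph-identity}: the normalization there is by $(\RCZ_H)^2$, the one that actually arises in its proof, since $z_e,z_f$ are set to $0$. So we use
\[
\frac{q^2\Rayleigh_{ef}(\RCZ_G)}{(1-q)(\RCZ_H)^2}=\Prob_H(J_{ef})-(1-q)\Cov_H(N_e,N_f).
\]
The Rayleigh difference is taken in the variables $z_e,z_f$. Since $z=q\lambda$, we have $\partial_{z_e}=q^{-1}\partial_{\lambda_e}$. Writing $\widehat Z(\boldsymbol\lambda)=\RCZ_G(q,q\boldsymbol\lambda)$, this gives $\Rayleigh^{(z)}_{ef}(\RCZ_G)=q^{-2}\Rayleigh^{(\lambda)}_{ef}(\widehat Z)$. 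The left side is therefore $\Rayleigh^{(\lambda)}_{ef}(\widehat Z)/\bigl((1-q)\widehat Z_H^2\bigr)$.

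We then expand $\widehat Z$ coefficientwise. Write $\widehat Z=A+\lambda_eB+\lambda_fC+\lambda_e\lambda_fD$, so that $\Rayleigh=BC-AD$. Each of $A,B,C,D$ equals $q^n$ times the corresponding forest piece plus $O(q^{n+1})$. The reason is that the multi-affine decomposition commutes with the substitution, and the $q^{\kappa+|A|}$ count applies to configurations containing $e,f$ as well. Hence $\Rayleigh^{(\lambda)}_{ef}(\widehat Z)=q^{2n}\Rayleigh_{ef}(\ForestZ_G)+O(q^{2n+1})$ and $\widehat Z_H^2=q^{2n}(\ForestZ_H)^2+O(q^{2n+1})$. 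Dividing and letting $q\to0$ yields the first displayed identity; the factors $(1-q)$ tend to $1$.

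The second form is pure algebra on events. We have $\Prob(\mathcal A_e)=1-\Prob(N_e)$. On forests, $\mathcal A_{ef}$ means that adding both edges creates no cycle, which is exactly $N_e^c\cap N_f^c\cap J_{ef}^c$. Hence
\[
\Prob(\mathcal A_e)\Prob(\mathcal A_f)-\Prob(\mathcal A_{ef})=\bigl(1-\Prob N_e\bigr)\bigl(1-\Prob N_f\bigr)-\bigl(1-\Prob N_e-\Prob N_f+\Prob(N_e\cap N_f)-\Prob J_{ef}\bigr).
\]
This simplifies to $\Prob(J_{ef})-\Cov(\1_{N_e},\1_{N_f})$.

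The main obstacle is the normalization and the chain rule in $q$: we must check that every relevant term carries exactly the leading power $q^{2n}$. In particular, $J_{ef}$ configurations contribute at the forest order. This holds because $J_{ef}$ is tested on $A\subseteq E(H)$, which is a forest, so no hidden cancellation at lower order can occur.
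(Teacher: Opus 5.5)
Your proof is correct and follows the paper's route. You substitute $z_g=q\lambda_g$ into Proposition~\ref{prop: deleted-graph-identity}, rightly using the $(\RCZ_H)^2$ normalization that its proof actually produces, check that numerator and denominator both scale as $q^{2n}$ times their forest counterparts, and let $q\to0$; your chain-rule passage to $\lambda$-variables is just an equivalent way of recording the paper's factor $q^{2|V(H)|-2}$ on $\Rayleigh_{ef}(\RCZ_G)$. Your second form, obtained from $\mathcal A_{ef}=N_e^c\cap N_f^c\cap J_{ef}^c$, is the paper's disjoint decomposition of $\mathcal A_e\cap\mathcal A_f$ into $\mathcal A_{ef}$ and $J_{ef}$.
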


\begin{proof}
Under the substitution $z_g=q\lambda_g$, we have $\RCZ_H=q^{|V(H)}(\ForestZ_H+O(q))$, and $\Rayleigh_{ef}(\RCZ_G)=q^{2|V(H)|-2}(\Rayleigh_{ef}(\ForestZ_G)+O(q))$. Taking the limit $q\to 0$ yields the first claim on the Rayleigh difference for the arboreal gas measure. 

For the second claim, note that $\mathcal{A}_e\cap \mathcal{A}_f$ is the disjoint union of the events $\mathcal{A}_{ef}$ and $J_{ef}$. Either both can be added simultaneously or they can be added individually but both cannot be added which corresponds to being joint bridges, $J_{ef}$. Next, note that covariance is unchanged under complementing both events; hence we have 
\[\Prob_{H}^{\mathcal{F}}(\mathcal{A}_{ef})+\Prob_H^{\mathcal{F}}(J_{ef})=\Prob_H^{\mathcal{F}}(\mathcal{A}_e\cap \mathcal{A}_f),\]
from which we get the second desired equality.
\end{proof}

The equalities in Proposition \ref{prop: deleted-graph-identity} and Corollary \ref{cor:forest-deleted-graph} give us the following inequalities on connectivity events $N_e$ and $N_f$ which are equivalent to negative edge correlation in their respective models, 
\begin{align*}
    \Cov_H^{\mathrm{RC}}(\mathbf{1}_{N_e},\mathbf{1}_{N_f})&\leq \dfrac{\Prob_H^{\mathrm{RC}}(J_{ef})}{1-q} \;\;\;\;\; (0<q<1) \\
      \Cov_H^{\mathcal{F}}(\mathbf{1}_{N_e},\mathbf{1}_{N_f})&\leq \Prob_H^{\mathcal{F}}(J_{ef}). 
\end{align*}

We will also discuss the inequality  $\Cov_H(\mathbf{1}_{N_e},\mathbf{1}_{N_f})\geq 0$ in Section \ref{sec:connectivity-programme}
which is another conjecture we are interested in. 

We end this section by defining the bridge correlation (bridge Rayleigh difference) for the random-cluster measure and showing how it ties to the Rayleigh difference of the random-cluster partition function. Let $e,f \in E(G)$ be fixed and let $W_{ij}$ for $i,j \in \{0,1\}$ be the random-cluster partition function restricted to configurations in which the edge $e$ has state $i$ and the edge $f$ has state $j$, where $i=0$ indicates that the edge is not in the configuration and $i=1$ that the edge is in the configuration. We write $B_e$ and $B_f$ for the events that the edges are in the configuration and they are bridges respectively. In the expansion used in Lemma~\ref{lem:deletion-identity}, these satisfy $W_{00}=A$, $W_{10}=z_eB$, $W_{01}=z_fC$, and $W_{11}=z_ez_fD$. Figure \ref{fig:ordinary-joint-bridges} shows the difference between the events $B_e,B_f$ and $J_{ef}$ of being joint bridges. 

\begin{figure}[htbp]
\centering
\BridgeTypesFigure
\caption{Ordinary and joint bridges, conditioned on both marked edges being present.  In (a), each marked edge is a bridge.  In (b), neither marked edge is a bridge, since the other still connects the two blocks; deleting both raises the component count by one.}
\label{fig:ordinary-joint-bridges}
\end{figure}

We define the bridge Rayleigh difference as
\[\Rayleigh_{ef}^b=\partial_{ef}\RCZ_G(B_e)\partial_{ef}\RCZ_G(B_f)-\partial_{ef}\RCZ_G(B_e\cap B_f)\partial_{ef}\RCZ_G.\]

We reserve $Z_{ef}:=\partial_{ef}\RCZ_G$ for the derivative, so that
$W_{11}=z_ez_fZ_{ef}$. For an event $\mathcal E$, define
\[
Z_{ef}(\mathcal E):=\sum_{\substack{A\subseteq E(G)\\e,f\in A}}
\mathbf1_{\mathcal E}(A)q^{\kappa(A)}\prod_{g\in A\setminus\{e,f\}}z_g.
\]
These are unnormalized restricted sums with the marked-edge weights removed;
the conditional probability of $\mathcal E$ given $e,f\in A$ is
$Z_{ef}(\mathcal E)/Z_{ef}$.

Hence, we are looking at the Rayleigh difference with respect to the edges being present and being bridges. We prove the following proposition which makes this definition worthwhile. 
\begin{prop}
\label{prop:bridge-identity}
For the random-cluster model $\RCZ_G$ with $q\neq 1$, we have
\[
\dfrac{\Rayleigh_{ef}(Z_G)}{(1-q)^2}
=\Rayleigh^b_{ef}
+\frac{Z_{ef}(J_{ef})Z_{ef}}{1-q}.
\]
\end{prop}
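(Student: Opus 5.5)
The plan is to compute the four pieces $A,B,C,D$ of the decomposition $\RCZ_G=A+z_eB+z_fC+z_ez_fD$ from Lemma~\ref{lem:deletion-identity} directly in terms of restricted sums over configurations that contain both marked edges. By that lemma, $\Rayleigh_{ef}(\RCZ_G)=BC-AD$, and $D=\partial_{ef}\RCZ_G=Z_{ef}$. So everything reduces to expressing $A$, $B$, $C$ through $Z_{ef}$ and the restricted sums $Z_{ef}(B_e)$, $Z_{ef}(B_f)$, $Z_{ef}(B_e\cap B_f)$, $Z_{ef}(J_{ef})$.

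\textbf{Step 1 ($B$ and $C$).} The map $A'\mapsto A'\setminus\{f\}$ is a bijection from configurations containing both $e$ and $f$ onto configurations containing $e$ but not $f$. It preserves the product of the weights $z_g$, $g\neq e,f$. Deleting $f$ raises $\kappa$ by one exactly when $f$ is a bridge of $A'$, that is, on $B_f$, and leaves $\kappa$ unchanged otherwise. Hence the weight is multiplied by $q^{\1_{B_f}}$, and
\[
B=Z_{ef}+(q-1)Z_{ef}(B_f),
\qquad
C=Z_{ef}+(q-1)Z_{ef}(B_e)
\]
by symmetry.

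\textbf{Step 2 ($A$).} Deleting both edges from $A'$ multiplies the weight by $q^{\Delta}$, where $\Delta=\kappa(A'\setminus\{e,f\})-\kappa(A')$. I will show that
\[
\Delta=\1_{B_e}+\1_{B_f}+\1_{J_{ef}},
\]
and that $J_{ef}$ is disjoint from $B_e\cup B_f$. I expect this case check to be the main obstacle, and I would argue it as follows.
\begin{itemize}
\item If $e$ is a bridge of $A'$, then $e$ lies on no cycle. Every cycle through $f$ therefore avoids $e$, so $f$ keeps its status (bridge or not) after $e$ is removed. Deleting a bridge raises $\kappa$ by exactly one, so $\Delta=\1_{B_e}+\1_{B_f}$ in this case. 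The same holds with the roles of $e$ and $f$ swapped.
\item If neither edge is a bridge, deleting $e$ leaves $\kappa$ unchanged. Deleting $f$ afterwards raises $\kappa$ by one precisely when $f$ has become a bridge of $A'\setminus e$. This is exactly the joint-bridge event $J_{ef}$ as pulled back to configurations containing $e,f$, and it is disjoint from $B_e\cup B_f$ by definition.
\end{itemize}
Expanding $q^{\Delta}$ with this formula and using the disjointness, I will obtain
\[
A=Z_{ef}+(q-1)\bigl(Z_{ef}(B_e)+Z_{ef}(B_f)+Z_{ef}(J_{ef})\bigr)+(q-1)^2Z_{ef}(B_e\cap B_f).
\]
Here the $(q-1)^2$ term arises because $q^{\1_{B_e}+\1_{B_f}}=\bigl(1+(q-1)\1_{B_e}\bigr)\bigl(1+(q-1)\1_{B_f}\bigr)$. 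The $J_{ef}$ term contributes only linearly, since $J_{ef}$ never co-occurs with $B_e$ or $B_f$.

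\textbf{Step 3 (algebra).} I will then expand $BC-AD$ using $D=Z_{ef}$.
\begin{itemize}
\item The $Z_{ef}^2$ terms cancel.
\item The linear terms $(q-1)Z_{ef}\bigl(Z_{ef}(B_e)+Z_{ef}(B_f)\bigr)$ cancel.
\item What remains is
\[
(q-1)^2\bigl(Z_{ef}(B_e)Z_{ef}(B_f)-Z_{ef}(B_e\cap B_f)Z_{ef}\bigr)-(q-1)Z_{ef}(J_{ef})Z_{ef}.
\]
\end{itemize}
This equals $(1-q)^2\Rayleigh^b_{ef}+(1-q)Z_{ef}(J_{ef})Z_{ef}$. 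Dividing by $(1-q)^2$, which is allowed since $q\neq1$, gives the claimed identity.
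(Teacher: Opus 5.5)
Your proof is correct and follows the same route as the paper. Both express the three other pieces of $\RCZ_G$ through sums over configurations containing both marked edges, use $\kappa(A\setminus\{e,f\})-\kappa(A)=\1_{B_e}+\1_{B_f}+\1_{J_{ef}}$ with $J_{ef}$ disjoint from $B_e\cup B_f$, and then expand $BC-AD$. The differences are cosmetic: the paper normalizes by $W_{11}$ and works with conditional probabilities $p_e,p_f,p_{ef},p_J$ where you keep unnormalized sums, and you supply the case check for the component-count identity, which the paper only asserts.
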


\begin{proof}
Let $\nu$ be the measure conditioned on both edges being present, i.e. it is the normalized measure on $W_{11}$. Set $p_e:=\nu(B_e), p_f:=\nu(B_f), p_{ef}:=\nu(B_e\cap B_f)$, and $p_J:=\nu(J_{ef})$. Deleting the edges $e,f$ one by one, we get 
\[\dfrac{z_fW_{10}}{W_{11}}=1-(1-q)p_f, \;\;\;\;\; \dfrac{z_eW_{01}}{W_{11}}=1-(1-q)p_e.\]
The idea here is just to note in which of the configurations the corresponding edge ($e$ or $f$) is a bridge and is not a bridge and to partition the functions accordingly noting that we have a factor of $q$ when it is a bridge. 

We also note the following identity for the connectivity function $\kappa$ when both $e$ and $f$ are present in the graph. For $A\in W_{11}$, we have
\[\kappa(A-e-f)-\kappa(A)=\mathbf{1}_{B_e}+\mathbf{1}_{B_f}+\mathbf{1}_{J_{ef}}.\]
Since the joint bridge event is disjoint from both being bridges, we get the following from above
\[\dfrac{z_ez_fW_{00}}{W_{11}}=1-(1-q)(p_e+p_f)+(1-q)^2p_{ef}-(1-q)p_J.\]

Now, if we multiply and arrange the quantities to get to the Rayleigh difference, we get 
\[z_ez_f\dfrac{W_{10}W_{01}-W_{11}W_{00}}{W^2_{11}}=(1-q)p_J+(1-q)^2(p_ep_f-p_{ef}).\]

Now note that 
\begin{align*}
Z_{ef}&=\dfrac{W_{11}}{z_ez_f}\\
\Rayleigh_{ef}&=\dfrac{W_{10}W_{01}-W_{11}W_{00}}{z_ez_f}\\
\Rayleigh^b_{ef}&=Z^2_{ef}(p_ep_f-p_{ef})\\
Z_{ef}(J_{ef})Z_{ef}&=p_JZ_{ef}^2.
\end{align*}
Using the above, and multiplying by $Z_{ef}^2$, and dividing by $(1-q)^2$, we get the desired equality relating the Rayleigh difference to the Rayleigh bridge difference. 
\end{proof}

\section{Arboreal gas measure at large-\texorpdfstring{$\lambda$}{lambda} fugacity}
\label{sec: arboreal-gas}
\subsection{Proof of Theorem \ref{thm:leading-coefficients}}
In this section, we prove Theorem \ref{thm:leading-coefficients}. Before delving into the details of the proof, we introduce some electrical and linear algebraic notation in order to familiarize the reader with this notation. 

Let $G=(V,E)$ be a graph. We can give $G$ an arbitrary orientation by directing each edge $h$. We pick the endpoints and arbitrarily assign $h^+,h^-$ to the endpoints. The edge will be directed from $h^-$ to $h^+$. We write $b_h=\mathbf{1}_{h^+}-\mathbf{1}_{h^-}$, where $\mathbf{1}_{v}\in \mathbb{R}^{|V|}$ with $\mathbf{1}_v(u)=\delta_{uv}$.\footnote{Here $\delta_{uv}$ is the usual Kronecker delta symbol: $\delta_{uv}=0$ when $u\neq v$ and $\delta_{vv}=1$.} 

Suppose a unit current is injected at $h^+$ and removed from $h^-$, i.e. view the graph as having $h^+$ as its source and $h^-$ as its sink. For any vertex $u\in V$, we denote the potential induced by this current as $v_h(u)$, and for a given edge $g=g^-g^+$, we denote the potential difference across its endpoints as $u_h(g):=v_h(g^+)-v_h(g^-)$. We reserve $v_h=L^\dagger b_h$ for the mean-zero potential and write $\widetilde v_h=v_h+c_h\mathbf1$ for a shifted representative, with its grounding specified locally. We write $s_h(g):=\widetilde v_h(g^+)+\widetilde v_h(g^-)$ for the endpoint sum of the representative in use; the mean-zero choice corresponds to $c_h=0$. We let $r_G(h)$ be the effective resistance along an edge, and note that $r_G(h)=u_h(h)$, and let $r_G(x,y)$ be the effective resistance between any two vertices $x$ and $y$. We also note that we have $u_h(g)=u_g(h)$ which is the transfer current from $h$ to $g$. We also define the effective conductance as the inverse of the resistance, $c^{eff}(h):=\frac{1}{r_G(h)}$.

Under unit conductances, we define the graph Laplacian using the vectors as \[L_G:=L=\sum_{h\in E}b_hb_h^T.\]
We will use the Moore-Penrose pseudoinverse of the Laplacian matrix to write the electrical parameters in terms of the Laplacian, which we denote as $L^{\dagger}:=L_G^{\dagger}$. For completeness, we define it here. Let $L\in \mathbb{C}^{n\times m}$. Then $L^{\dagger}\in \mathbb{C}^{m\times n}$ is the unique matrix satisfying the following three conditions \cite{cambridgeGeneralizedInverse}: 
\begin{enumerate}
    \item $LL^{\dagger}$ doesn't have to be the identity matrix but it maps all the column vectors of $L$ to themselves, i.e. $LL^{\dagger}L=L$.
    \item $L^{\dagger}$ acts like a weak inverse in the sense that we have \[L^{\dagger}LL^{\dagger}=L^{\dagger}.\]
    \item $LL^{\dagger}$ and $L^{\dagger}L$ are both Hermitian matrices, \[(LL^{\dagger})^{*}=LL^{\dagger}, \;\;\;\; (L^{\dagger}L)^*=L^{\dagger}L.\]
\end{enumerate}

With the pseudoinverse, we have 
\[v_h=L^{\dagger}b_h\]
for the voltage created by injecting one unit of current at $h^+$ and removing it at $h^-$. This function is harmonic except at the two endpoint vertices and is defined uniquely up to additive constants, and the pseudoinverse chooses the mean-zero constant. The potential difference we defined above can be written as \[u_h(g)=b_g^Tv_h=b_g^TL^{\dagger}b_h.\]

We also note that reversing the orientation of an edge changes the signs but it does not change the value, and hence is invariant under taking squares. 

We recall what Kirchhoff's matrix-tree theorem and the transfer-current theorem say in terms of the electrical parameters we defined for completeness. The main reason we do this calculation is that this becomes the highest term in our forest calculation as well. If $T$ is a uniform spanning tree under the measure $\mathcal{T}$, and $e \in E(G)$ then we have
\[\Prob_{\mathcal{T}}(e\in T)=r_G(e).\]
For the covariance of two edges $e,f$ we have
\[\Cov_{\mathcal{T}}(\mathbf{1}_{e\in T},\mathbf{1}_{f\in T})=-u_e(f)^2.\]
Recall that $\tau_1=\tau$ denoted the number of spanning trees. The above is then equivalent to
$\tau(G-e)=\tau(G)(1-r_G(e))$.

We can do a similar analysis for the graph $G-e-f$ obtained by deleting two edges to deduce that in spanning trees, we get a square term. 

In the usual matrix-tree theorem, we delete one row and one column of the Laplacian matrix; let us denote this by $\hat{L}$, and let us denote the deleted version of the incidence vector as $\hat{b}_h$. Then the matrix-tree theorem states that $\tau(G)=\operatorname{det}(\hat{L})$. Using the matrix determinant lemma, we get 
\[\tau(G-e-f)=\operatorname{det}(\hat{L}_{G-e-f})=\operatorname{det}(\hat{L})\operatorname{det}\left(I-\begin{pmatrix}
\hat{b}_e^T \\
\hat{b}_f^T
\end{pmatrix}\hat{L}^{-1}\begin{pmatrix}
\hat{b}_e & \hat{b}_f    
\end{pmatrix}\right).\]

The entries of the $2\times 2$ matrix on the right are $r_G(e),r_G(f)$ and $u_e(f)=u_f(e)$. Thus, expanding the determinant, and noting that $\operatorname{det}(\hat{L})=\tau(G)$, we get
\[\tau(G-e-f)=\tau(G)((1-r_G(e))(1-r_G(f))-u_e(f)^2),\]
where $r_G(e)=b_e^TL^\dagger b_e$. This turns out to be the same as applying $\hat{L}^{-1}$ in this calculation, because we are mainly interested in the potential differences and the additive constant does not matter in taking differences. 

With these at hand, let us calculate the Rayleigh difference for the uniform spanning measure on $G$ using the deleted-graph identity. We have 
\[
\tau(G-e)\tau(G-f)-\tau(G)\tau(G-e-f)=(\tau(G)u_e(f))^2,\]
which is a square and hence is nonnegative.

Now, we proceed with the quantity we will be concerned with in this section. For a connected graph $G$, we define the two-forest ratio as \[Q(G):=\dfrac{\tau_2(G)}{\tau(G)},\]
which is the ratio of two-component forests to spanning trees in a graph. The weighted form of the two-forest ratio has been defined by Pitman and Tang \cite{PitmanTang2018}, and the two-forest ratio $Q(G)$ we use was studied in \cite{richman2023counting,KasselKenyonWu2015}.   

If we expand the Rayleigh difference for the uniform forest measure using the deleted-graph framework we have introduced at large $\lambda$ fugacity, i.e. we are only interested in the highest two terms, we get the following two quantities, one for the trees which is the highest coefficient and the second for the two-component forests which is the second-highest coefficient. 

\begin{align*}
H_0&=\tau(G-e)\tau(G-f)-\tau(G)\tau(G-e-f)\\
H_1&=\tau_2(G-e)\tau(G-f)+\tau(G-e)\tau_2(G-f)-\tau_2(G)\tau(G-e-f)-\tau(G)\tau_2(G-e-f),
\end{align*}
where we have

\begin{align*}
\Rayleigh_{ef}(\ForestZ_G)(\lambda\mathbf1)
&=\frac{\ForestZ_{G-e}(\lambda)\ForestZ_{G-f}(\lambda)-\ForestZ_G(\lambda)\ForestZ_{G-e-f}(\lambda)}{\lambda^2}\\
&=\sum_{i=0}^{2n-4}H_i\lambda^{2n-4-i}\\
&=H_0\lambda^{2n-4}+H_1\lambda^{2n-5}+O(\lambda^{2n-6}).
\end{align*}

We once again note that if $H_0\neq 0$, then we already have the result for sufficiently large $\lambda$ since $\lambda^{2n-4}$ dominates the rest of the terms when $\lambda > C$ for some large constant $C$. Hence, we focus on the case when $H_0=0$. Since $\tau(G)\neq 0$, this means that $u_e(f)=0$ and in particular that $G-e-f$ is connected. Moreover, we have 
\[\tau(G-e)\tau(G-f)=\tau(G)\tau(G-e-f).\]

If we divide $H_1$ by $\tau(G)\tau(G-e-f)$ using the equality above, we get

\begin{align*}
\dfrac{H_1}{\tau(G)\tau(G-e-f)}=&\dfrac{\tau_2(G-e)\tau(G-f)}{\tau(G-e)\tau(G-f)}+\dfrac{\tau(G-e)\tau_2(G-f)}{\tau(G-e)\tau(G-f)}\\
&-\dfrac{\tau_2(G)\tau(G-e-f)}{\tau(G)\tau(G-e-f)}-\dfrac{\tau(G)\tau_2(G-e-f)}{\tau(G)\tau(G-e-f)}\\
=&Q(G-e)+Q(G-f)-Q(G)-Q(G-e-f).
\end{align*}

We will work on the deleted difference of the function $Q$ from here on. We use the following theorem giving an electrical perspective on the two-forest ratio $Q$ proved by Richman, Shokrieh, and Wu \cite[Theorem C]{richman2023counting}.

\begin{thm}(\cite[Theorem C]{richman2023counting})
\label{thm: richmanshokriehwu}
Let $G=(V,E)$ be a connected graph and fix a vertex $o\in V$. Then,
\[\dfrac{\tau_2(G)}{\tau(G)}=\dfrac{1}{4}\sum_{g\in E(G)}r_G(g)^2+\dfrac{3}{4}\sum_{g=xy\in E(G)}(r_G(x,o)-r_G(y,o))^2.\]
\end{thm}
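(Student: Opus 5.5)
The plan is to deduce the formula from a subdivision limit combined with the matrix-tree calculus of this section. This is essentially the route of Richman, Shokrieh and Wu through the metric graph, so in the paper it is enough to cite \cite[Theorem C]{richman2023counting}. The starting point is the map $(T,g)\mapsto T-g$ from pairs with $T$ a spanning tree and $g\in T$ onto two-component forests. A forest $F$ with vertex bipartition $(S,S^c)$ has exactly $c_F=|E(S,S^c)|$ preimages, so
\[
\tau_2(G)=\sum_{T}\sum_{g\in T}\frac{1}{c_{T-g}}.
\]
The factor $1/c_F$ is non-linear and is the whole difficulty. To linearize it, I would subdivide every edge of $G$ into a path of $N$ edges, each of conductance $N$, obtaining $G_N$. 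This leaves every effective resistance between original vertices unchanged.

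\textbf{Step 1 (counting in $G_N$).} In $G_N$, a spanning tree uses all $N$ segments of each edge of some spanning tree $T$ of $G$, and all but one segment of each edge outside $T$. A two-forest either (i) deletes one extra segment from an edge outside $T$ (one edge then contributes two gaps), or (ii) deletes one segment from an edge $g\in T$. I would compute the weighted ratio $\tau_2(G_N)/\tau(G_N)$ exactly as a sum over $(T,g)$ of elementary counts of gap positions. Since all gap positions along a path are equally likely, the $1/c_F$ disappears: every choice of cut segment is counted once. As $N\to\infty$, the gap positions become independent uniform points in $[0,1]$.

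\textbf{Step 2 (identifying the two sums).} Contribution (i) is controlled by $\Prob_{\mathcal T}(g\notin T)=1-r_G(g)$. The relative position of the two gaps inside a single edge gives a quadratic integral $\int_0^1\!\!\int_0^1(\cdot)\,ds\,dt$, which I expect to produce the term $\tfrac14\sum_g r_G(g)^2$. Contribution (ii) requires the position, within a tree edge $g=xy$, of the point where the current from a base vertex is split. By the transfer-current theorem, $\Prob(g\in T\text{ separates } o \text{ from } x)$ is linear in the potential $r_G(\cdot,o)$ (equivalently in $v_h$ with grounding at $o$). A uniform cut point on $g$ then gives an integral of a quadratic polynomial in $r_G(x,o)$ and $r_G(y,o)$. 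This should collapse to $\tfrac34(r_G(x,o)-r_G(y,o))^2$ plus terms that telescope to zero by Foster's theorem $\sum_g r_G(g)=n-1$ and by harmonicity of $r_G(\cdot,o)$ away from $o$.

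\textbf{Step 3 (returning to $G$).} Finally I would relate $Q(G_N)$ back to $Q(G)$. The discrete two-forests of $G$ correspond to cuts in $G_N$ restricted to original edges. I would run the same bookkeeping with the counting measure on edges in place of Lebesgue measure; this changes only the constants in the integrals, and the requirement is that these evaluate to $\tfrac14$ and $\tfrac34$. Independence of the choice of $o$ serves as a consistency check: $\sum_{xy}(r(x,o)-r(y,o))^2$ must be independent of $o$, which should follow from the cycle-space structure of potentials.

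The main obstacle is Step 2(ii). There, the nonlinear dependence on where the tree is cut has to be converted into the single quadratic form $\sum_{xy}(r(x,o)-r(y,o))^2$ with exactly the constant $\tfrac34$. I would first check the bookkeeping on a cycle $C_n$ and on $K_4$, where both sides can be computed by hand, before trusting the general telescoping.
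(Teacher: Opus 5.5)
The paper gives no proof of this statement. It is quoted from \cite[Theorem C]{richman2023counting}, so your remark that citing suffices is exactly the paper's route. The reconstruction you sketch, however, fails at Step~1: subdivision does not remove the weight $1/c_F$.

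In $G_N$, a two-forest of your type (ii) is determined by the two-forest $F=T-g$ of $G$ together with one gap position on each of the $|E|-|V|+2$ edges outside $F$. The edge $g$ is not recorded. For every edge $g'$ of $G$ crossing between the two components of $F$, take the spanning tree of $G_N$ built from $F+g'$ with the same gap positions, and remove the segment of $g'$ at the forest's gap; this gives the same forest of $G_N$. Equivalently, the edges of $G_N$ joining the two components of such a forest are exactly the gap segments of the $c_F$ crossing edges. So in $\tau_2(G_N)=\sum_{T_N}\sum_{h\in T_N}1/c_{T_N-h}$ the multiplicity is still $c_F$ (and $2$ for type (i)). Uniform gap positions do not help, because the overcount comes from which original edge carries the cut, not from where on it the cut sits. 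Counting exactly with your conductance-$N$ weights gives
\[
\frac{\tau_2(G_N)}{\tau(G_N)}=Q(G)+\frac{N-1}{2N}\bigl(|E|-|V|+1\bigr),
\]
so nothing is gained as $N\to\infty$.

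This also sinks Steps~2 and~3.
The type (i) term is $\frac{N-1}{2N}\sum_g(1-r_G(g))$. By Foster's theorem this is a topological constant, linear in $r_G(g)$; the integral over the two gap positions has constant integrand. So it cannot produce $\frac14\sum_g r_G(g)^2$.
The type (ii) term is $Q(G)$ itself, so Step~2(ii) is the original problem. Step~3 just reads the displayed identity backwards, and the scheme is circular.

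What is missing is a genuine additive substitute for $1/c_F$. Here is what actually has to be proved. On a unit edge $g=xy$ of the metric graph $\mathcal G$ of $G$, the function $r(\cdot,o)$ at distance $t$ from $x$ equals $(1-t)r_G(x,o)+t\,r_G(y,o)+t(1-t)(1-r_G(g))$. Hence the Baker--Rumely invariant $\tau_{\mathrm{BR}}(\mathcal G)=\frac14\int_{\mathcal G}(\partial_p r(p,o))^2\,dp$ equals $\frac14\sum_{g=xy}\bigl[(r_G(x,o)-r_G(y,o))^2+\frac13(1-r_G(g))^2\bigr]$. With Foster's theorem, the statement is then equivalent to $Q(G)=3\tau_{\mathrm{BR}}(\mathcal G)-\frac{|E|}{4}+\frac{|V|-1}{2}$. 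That link between the two-forest ratio and the tau invariant is the real content, and gap bookkeeping cannot produce it.

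An elementary device that does linearize is the identity $\sum_{u\in S}(1-\frac12\deg_F u)=1$ for each component vertex set $S$ of a forest $F$. It turns $\tau_2(G)$ into $\sum_F\sum_{u\in S,\,v\notin S}(1-\frac12\deg_F u)(1-\frac12\deg_F v)$. This is a combination of counts of $u$--$v$ separating two-forests with at most two prescribed edges, which are Laplacian minors reachable by low-rank updates. That expression would still have to be simplified to the stated form.
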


We will also use the Sherman-Morrison formula which extends the matrix deletion lemma \cite{ShermanMorrison1950}. 

\begin{lemma}(Sherman-Morrison Formula)
\label{lem: shermanmorrison}
Suppose $A\in \mathbb{R}^{n\times n}$ is an invertible matrix and $u,v \in \mathbb{R}^n$ are column vectors. Then $A+uv^T$ is invertible if and only if $1+v^TA^{-1}u\neq 0$. In this case, 
\[(A+uv^{T})^{-1}=A^{-1}-\dfrac{A^{-1}uv^TA^{-1}}{1+v^TA^{-1}u}.\]
\end{lemma}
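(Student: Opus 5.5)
We will reduce everything to the identity matrix by factoring out $A$. Since $A$ is invertible, we can write $A+uv^T=A\,(I+wv^T)$ with $w:=A^{-1}u$. Hence $A+uv^T$ is invertible if and only if $I+wv^T$ is, and the whole question becomes one about a rank-one perturbation of the identity.

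For the invertibility criterion, we compute $\det(I+wv^T)=1+v^Tw$, which is the matrix determinant lemma already used above for $\tau(G-e-f)$. We will prove it directly from the action of $I+wv^T$.
\begin{itemize}
\item If $w\neq 0$, it fixes every vector of the hyperplane $\{x: v^Tx=0\}$, which has dimension $n-1$ when $v\neq0$.
\item It sends $w$ to $(1+v^Tw)\,w$.
\end{itemize}
When $v^Tw\neq0$, the vector $w$ lies outside that hyperplane, so we obtain an eigenbasis and the determinant is $1+v^Tw$. The degenerate cases ($v=0$, $w=0$, or $v^Tw=0$) can be handled separately: there the map is a transvection or the identity, so the determinant is $1$, which again equals $1+v^Tw$. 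Alternatively, we can use the block identity
\[
\begin{pmatrix} I & 0\\ v^T & 1\end{pmatrix}
\begin{pmatrix} I+wv^T & w\\ 0 & 1\end{pmatrix}
\begin{pmatrix} I & 0\\ -v^T & 1\end{pmatrix}
=\begin{pmatrix} I & w\\ 0 & 1+v^Tw\end{pmatrix}
\]
and take determinants, which avoids any case split. Either route gives $\det(A+uv^T)=\det(A)\,(1+v^TA^{-1}u)$, hence invertibility exactly when $1+v^TA^{-1}u\neq0$.

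For the inverse formula, assume $\beta:=1+v^TA^{-1}u\neq 0$. We will simply multiply $A+uv^T$ by the proposed matrix $A^{-1}-\beta^{-1}A^{-1}uv^TA^{-1}$ and expand. The product is
\[
I+uv^TA^{-1}-\beta^{-1}\,u v^TA^{-1}-\beta^{-1}\,u\,(v^TA^{-1}u)\,v^TA^{-1}.
\]
The key point is that $v^TA^{-1}u$ is a scalar. Grouping the last two terms therefore gives $-\beta^{-1}(1+v^TA^{-1}u)\,uv^TA^{-1}=-uv^TA^{-1}$, which cancels the second term and leaves $I$. A one-sided inverse of a square matrix is two-sided, so this finishes the argument.

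No step here is a serious obstacle. The only place needing care is the determinant identity in the degenerate case $v^Tw=0$, and the block-matrix computation sidesteps it. We will then apply the lemma in the sequel with $u=\pm v=b_h$ (restricted or grounded appropriately) to compute $L_{G-h}^{\dagger}$, or $\hat L_{G-h}^{-1}$, and hence the potentials in the deleted graphs.
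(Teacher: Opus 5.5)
The paper does not prove this lemma. It quotes the classical Sherman--Morrison formula with a citation and uses it as a black box in Lemma~\ref{lem:electrical-calculus}. Your argument is therefore a genuinely different route, and it is a correct, self-contained one. The factorization $A+uv^T=A(I+wv^T)$ with $w=A^{-1}u$ is right. The block identity checks out: the triple product equals $\begin{pmatrix} I & w\\ 0 & 1+v^Tw\end{pmatrix}$, and the outer factors have determinant $1$. The expansion showing that $A^{-1}-\beta^{-1}A^{-1}uv^TA^{-1}$ is a right inverse is also correct.

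Two small remarks. First, the qualifier ``if $w\neq0$'' in your first bullet is unnecessary, since $(I+wv^T)x=x$ whenever $v^Tx=0$, for any $w$. Second, determinants are not needed for the equivalence at all. If $\beta=0$, then $v^TA^{-1}u=-1$ forces $A^{-1}u\neq0$, and $(A+uv^T)A^{-1}u=(1+v^TA^{-1}u)\,u=0$ exhibits a nonzero kernel vector. If $\beta\neq0$, your multiplication produces the inverse.

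What your block-matrix route buys over the bare citation is this: replacing $w,v$ by $n\times k$ matrices $W,V$, the identical computation gives $\det(I_n+WV^T)=\det(I_k+V^TW)$. This is exactly the rank-two matrix determinant lemma the paper invokes, also without proof, to obtain $\tau(G-e-f)$. The citation, for its part, is simply the economical choice for a standard fact.
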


First, we tie the resistance in deleted graphs to the resistance and potential difference in the original graph using the fact that $u_e(f)=0$. The main importance of Lemma \ref{lem:electrical-calculus} is that since the potential difference across $e$ is zero when we fix $f$ and vice versa, we observe that the deleted graphs become independent, and the formulas turn into summation. 

\begin{lemma}
\label{lem:electrical-calculus}
Let $G=(V,E)$ be a connected graph, and let $e,f \in E$ be non-bridge edges. Assume $u_e(f)=0$. Then, for $h\in\{e,f\}$ we have
\[
L_{G-h}^\dagger=L^\dagger+
\frac{L^\dagger b_hb_h^TL^\dagger}{1-r_G(h)}.
\]
Consequently, for $g\notin\{e,f\}$, we have
\begin{align*}
r_{G-e}(g)&=r_G(g)+\frac{u_e(g)^2}{1-r_G(e)},\\
r_{G-e-f}(g)&=r_G(g)+\frac{u_e(g)^2}{1-r_G(e)}
+\frac{u_f(g)^2}{1-r_G(f)}.
\end{align*}
If $\widetilde v_e=v_e-v_e(o)\mathbf1$, so that $\widetilde v_e(o)=0$, and $s_e(g)=\widetilde v_e(g^+)+\widetilde v_e(g^-)$, then
\[
r_{G-e}(g^+,o)-r_{G-e}(g^-,o)
=r_G(g^+,o)-r_G(g^-,o)
+\frac{u_e(g)s_e(g)}{1-r_G(e)},
\]
with the analogous single- and double-edge deletion formulas for $f$.
\end{lemma}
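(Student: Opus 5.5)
The plan is to apply the Sherman--Morrison formula (Lemma~\ref{lem: shermanmorrison}) in its pseudoinverse form to the rank-one downdate $L_{G-h}=L-b_hb_h^T$, and then read off every electrical quantity from $L_{G-h}^\dagger$.

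\textbf{Step 1: the single-deletion formula.} Since $h$ is not a bridge, $G-h$ is connected. Hence $L$ and $L_{G-h}$ share the kernel $\mathrm{span}(\mathbf1)$ and are invertible on $\mathbf1^\perp$. The vector $b_h$ lies in $\mathbf1^\perp$. So we may apply Sherman--Morrison to the restriction to $\mathbf1^\perp$, with $u=-b_h$ and $v=b_h$. The denominator is $1-b_h^TL^\dagger b_h=1-r_G(h)$, which is nonzero precisely because $h$ is not a bridge. This gives the displayed formula for $L_{G-h}^\dagger$. To justify the pseudoinverse version, we check that the right-hand side annihilates $\mathbf1$ and inverts $L_{G-h}$ on $\mathbf1^\perp$. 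This follows from $L^\dagger\mathbf1=0$ and $LL^\dagger=I-\frac1n\mathbf1\mathbf1^T$.

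\textbf{Step 2: the double deletion.} Apply Step 1 again to $G-e$, deleting $f$. The key use of $u_e(f)=0$ is that
\[
L_{G-e}^\dagger b_f=L^\dagger b_f+\frac{L^\dagger b_e\,u_e(f)}{1-r_G(e)}=L^\dagger b_f .
\]
Consequently $r_{G-e}(f)=r_G(f)$, and the second correction term is $L^\dagger b_fb_f^TL^\dagger/(1-r_G(f))$. Before doing this we check that $f$ is not a bridge of $G-e$. This holds because $r_{G-e}(f)=r_G(f)<1$, or equivalently because $H_0=0$ forces $G-e-f$ to be connected. The result is
\[
L_{G-e-f}^\dagger=L^\dagger+\sum_{h\in\{e,f\}}\frac{v_hv_h^T}{1-r_G(h)} .
\]
Sandwiching this between $b_g^T$ and $b_g$ gives both resistance formulas, using $b_g^Tv_h=u_h(g)$.

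\textbf{Step 3: vertex resistances.} Write $r_K(x,o)=(\mathbf1_x-\mathbf1_o)^TL_K^\dagger(\mathbf1_x-\mathbf1_o)$. Then
\[
r_{G-e}(x,o)=r_G(x,o)+\frac{(v_e(x)-v_e(o))^2}{1-r_G(e)}=r_G(x,o)+\frac{\widetilde v_e(x)^2}{1-r_G(e)} .
\]
Taking the difference at $x=g^+$ and $x=g^-$, and factoring $\widetilde v_e(g^+)^2-\widetilde v_e(g^-)^2=u_e(g)s_e(g)$, gives the claimed identity. The $f$ and double-deletion analogues follow in the same way from Step 2, with the two corrections adding.

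The main obstacle is purely the bookkeeping in Step 1. It consists of making Sherman--Morrison legitimate for a singular Laplacian (the common kernel $\mathbf1$ and the restriction to $\mathbf1^\perp$), and checking that the mean-zero normalization is preserved.
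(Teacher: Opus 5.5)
Your proposal is correct and follows the paper's proof step for step. It applies Sherman--Morrison on $\mathbf1^\perp$ for the single deletion, and it uses $u_e(f)=0$ to get $L_{G-e}^\dagger b_f=v_f$ and $r_{G-e}(f)=r_G(f)$, so that the double-deletion corrections add; you then sandwich with $b_g$ or $\mathbf1_x-\mathbf1_o$. Your explicit checks that the right-hand side is the Moore--Penrose inverse and that $f$ remains a non-bridge in $G-e$ are slightly more careful than the paper's write-up, but the argument is the same.
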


\begin{proof}
Since $h$ is a non-bridge edge, we have $1-r_G(h)>0$. Now, on $\mathbb{1}^\perp$, the Laplacian is invertible, and we have $L_{G-h}=L-b_hb_h^T$. Using Lemma \ref{lem: shermanmorrison}, we have
\[(L-b_hb_h^T)^{-1}=L^{-1}+\dfrac{L^{-1}b_hb_h^TL^{-1}}{1-b_h^TL^{-1}b_h^T}.\]

On $\mathbf{1}^\perp$, when the Laplacian has an inverse, we already have $L^{-1}|_{\mathbf{1}^\perp}=L^{\dagger}|_{\mathbf{1}^\perp}$. On constants, we observe that both sides vanish above; hence they also agree on $\mathbf{1}^\perp$ which is how we can write the above with the Moore-Penrose pseudoinverse operator. 

If we multiply the above with $b_g$ on both sides, we get
\[r_{G-h}(g)=r_G(g)+
\dfrac{u_h(g)^2}{1-r_G(h)}.\]

Next, let us look at the identity where both $e$ and $f$ are deleted. We note that \[L^\dagger b_f=v_f+
\dfrac{u_e(f)}{1-r_G(e)}v_e=v_f,\] and similarly
\[r_{G-e}(f)=r_G(f)+\dfrac{u_e(f)^2}{1-r_G(e)}=r_G(f)\]
since we have $u_e(f)=0$.

Thus, deleting the edge $f$ from the graph $G-e$ would yield 
\begin{align*}
(L-b_eb_e^T-b_fb_f^T)^{-1}&=(L-b_eb_e^T)^{-1}+\dfrac{(L-b_eb_e^T)^{-1}b_fb_f^T(L-b_eb_e^T)^{-1}}{1-b_f^T(L-b_eb_e^T)^{-1}b_f}\\
&=L^\dagger+\dfrac{v_ev_e^T}{1-r_G(e)}+\dfrac{v_fv_f^T}{1-r_G(f)}
\end{align*}
which adds the term $\dfrac{u_f(g)^2}{1-r_G(f)}$ once we use the identities we established above for the Moore-Penrose pseudoinverse after multiplying with $b_g$ on both sides. Thus, this establishes
\[r_{G-e-f}(g)=r_G(g)+\frac{u_e(g)^2}{1-r_G(e)}
+\frac{u_f(g)^2}{1-r_G(f)}.\]

For the endpoints $g^+,g^-$ of an edge $g$, we do a similar analysis, except we set the grounding vertex with $\widetilde v_e=v_e-v_e(o)\mathbf1$ and $\widetilde v_f=v_f-v_f(o)\mathbf1$. We calculate each term $r_{G-e}(g^+,o)$ and $r_{G-e}(g^-,o)$ separately. Then we subtract them. The new term that is added is
\[
\frac{\widetilde v_e(g^+)^2-\widetilde v_e(g^-)^2}{1-r_G(e)}
=\frac{u_e(g)s_e(g)}{1-r_G(e)}.
\]
Deleting the edge $f$ is symmetric, and leads to the same except we switch the edges $e$ and $f$ in the appropriate places.  Because deletion of $e$ leaves $v_f$ and $r_G(f)$ unchanged, the double-deletion formula is the sum of these two updates. 
\end{proof}

Next, we use Lemma \ref{lem:electrical-calculus} to give a new way to express the two-forest ratio in deleted graphs. This will be the intermediate step leading us to the sums of squares form we want to prove.

\begin{lemma}
\label{lem:intermediate-electrical}
In the setting of Lemma~\ref{lem:electrical-calculus}, we have
\begin{align*}
&Q(G-e)+Q(G-f)-Q(G)-Q(G-e-f)\notag\\
&\quad=-\frac{1}{2(1-r_G(e))(1-r_G(f))}
\left(
\sum_{g\in E} u_e(g)^2u_f(g)^2
+3\sum_{g\in E} u_e(g)u_f(g)s_e(g)s_f(g)
\right).
\label{eq:intermediate-electrical}
\end{align*}
\end{lemma}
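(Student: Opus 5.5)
The plan is to apply Theorem~\ref{thm: richmanshokriehwu} to each of the four connected graphs $G$, $G-e$, $G-f$, $G-e-f$, using the \emph{same} root $o$ in all four. Then I substitute the update formulas of Lemma~\ref{lem:electrical-calculus} and take the alternating combination edge by edge.

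Connectedness is needed for Theorem~\ref{thm: richmanshokriehwu}. The graphs $G-e$ and $G-f$ are connected because $e,f$ are non-bridges. The graph $G-e-f$ is connected because the hypothesis $u_e(f)=0$ gives $\tau(G-e-f)=\tau(G)(1-r_G(e))(1-r_G(f))>0$.

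Write $Q(K)=\tfrac14\sum_{g\in E(K)}r_K(g)^2+\tfrac34\sum_{g\in E(K)}d_K(g)^2$, where $d_K(g)=r_K(g^+,o)-r_K(g^-,o)$. Fix $R=r_G(g)$ and define
\[
\alpha_g=\frac{u_e(g)^2}{1-r_G(e)},\qquad \beta_g=\frac{u_f(g)^2}{1-r_G(f)}.
\]
By Lemma~\ref{lem:electrical-calculus}, for $g\notin\{e,f\}$ the four resistances are $R$, $R+\alpha_g$, $R+\beta_g$ and $R+\alpha_g+\beta_g$. The elementary identity
\[
(R+\alpha)^2+(R+\beta)^2-R^2-(R+\alpha+\beta)^2=-2\alpha\beta
\]
then turns the first sum into $-\tfrac12\sum_{g\ne e,f}u_e(g)^2u_f(g)^2/\bigl((1-r_G(e))(1-r_G(f))\bigr)$.

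Exactly the same identity applies to the potential term. The four values of $d(g)$ are $d$, $d+\alpha'_g$, $d+\beta'_g$ and $d+\alpha'_g+\beta'_g$, where
\[
\alpha'_g=\frac{u_e(g)s_e(g)}{1-r_G(e)},\qquad \beta'_g=\frac{u_f(g)s_f(g)}{1-r_G(f)},
\]
and both potentials are grounded at $o$. This gives a contribution $\tfrac34\cdot(-2\alpha'_g\beta'_g)$, which is precisely the second sum in the statement.

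The step needing care is the bookkeeping for the marked edges themselves, since $e$ appears only in $G$ and $G-f$, and $f$ only in $G$ and $G-e$. For $g=e$, the orthogonality $u_f(e)=0$ makes the $f$-update vanish, so $r_{G-f}(e)=r_G(e)$ and $d_{G-f}(e)=d_G(e)$. The $+$ contribution from $G-f$ therefore cancels the $-$ contribution from $G$ exactly, and symmetrically for $f$. Moreover $u_e(g)u_f(g)=0$ for $g\in\{e,f\}$, so adding these two edges to the final sums over $g\in E$ changes nothing. Summing all contributions yields the claimed formula.

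I expect the main obstacle to be justifying that the same grounding $o$ may be used throughout and that the double-deletion update for $d$ is genuinely additive. This relies on $L_{G-e}^\dagger b_f=v_f$, from Lemma~\ref{lem:electrical-calculus}, so that $\widetilde v_f$ is unchanged after deleting $e$. I would check this explicitly before doing the summation.
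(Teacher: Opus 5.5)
Your proposal is correct and follows essentially the same route as the paper: you apply Theorem~\ref{thm: richmanshokriehwu} with a common root $o$ to all four graphs, insert the rank-one updates from Lemma~\ref{lem:electrical-calculus}, use $(R+\alpha)^2+(R+\beta)^2-R^2-(R+\alpha+\beta)^2=-2\alpha\beta$ for both the resistance sums and the grounded-potential sums, and note that the marked edges contribute nothing because $u_e(f)=0$. Your extra checks (connectivity of $G-e-f$ via $\tau(G-e-f)=\tau(G)(1-r_G(e))(1-r_G(f))>0$, and additivity of the double update via $L_{G-e}^\dagger b_f=v_f$) are facts the paper either uses implicitly or records in Lemma~\ref{lem:electrical-calculus}.
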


\begin{proof}
We will use Lemma \ref{lem: shermanmorrison} to write the quantities. First, let us consider the square parts that are not grounded at the vertex $o$. We have four terms $r_H(g)^2$ with $H$ ranging over $G,G-e,G-f,G-e-f$. For each of the deleted graphs, we use Lemma \ref{lem:electrical-calculus} to rewrite them. 
\begin{align*}
\dfrac{1}{4}(r_{G-e}(g)^2+r_{G-f}(g)^2-r_G(g)^2-r_{G-e-f}(g)^2)=&\dfrac{1}{4}\left(\left(r_G(g)+\dfrac{u_e(g)^2}{1-r_G(e)}\right)^2+\left(r_G(g)+\dfrac{u_f(g)^2}{1-r_G(f)}\right)^2\right)\\
&-\dfrac{1}{4}\left(r_G(g)^2+\left(r_G(g)+\dfrac{u_e(g)^2}{1-r_G(e)}+\dfrac{u_f(g)^2}{1-r_G(f)}\right)^2\right)\\
=&-\dfrac{1}{2}\dfrac{u_e(g)^2u_f(g)^2}{(1-r_G(e))(1-r_G(f))}.
\end{align*}

The edge $e$ is absent from the two graphs in which $e$ is deleted.  In the other two graphs its resistance is unchanged by deletion of $f$, because $u_f(e)=u_e(f)=0$; hence its net contribution is zero.  The same holds for $f$.  Thus the first part contributes
\begin{equation*}\label{eq:resistance-four-difference}
-\frac{1}{2(1-r_G(e))(1-r_G(f))}\sum_{g\in E(G)}u_e(g)^2u_f(g)^2.
\end{equation*}

Now, we look at the part which is grounded at the vertex $o$, that is, we choose a vertex $o$, and use $\widetilde v_h=v_h-v_h(o)\mathbf1$ for $h=e,f$. For $g\not \in \{e,f\}$, we again use Lemma \ref{lem:electrical-calculus} to rewrite the terms. Just for this proof, for simplicity, we introduce the following notation: 
\[
d_H(g)=r_H(g^+,o)-r_H(g^-,o)\quad(H\in\{G,G-e,G-f,G-e-f\}),
\qquad
p_g=\frac{u_e(g)s_e(g)}{1-r_G(e)},\quad
q_g=\frac{u_f(g)s_f(g)}{1-r_G(f)}.
\]
Then we have
\begin{align*}
\dfrac{3}{4}\left(d_{G-e}(g)^2+d_{G-f}(g)^2-d_G(g)^2-d_{G-e-f}(g)^2\right)=&\dfrac{3}{4}\left((d_G(g)+p_g)^2+(d_G(g)+q_g)^2\right)\\
&-\dfrac{3}{4}\left(d_G(g)^2+(d_G(g)+p_g+q_g)^2\right)\\
=&-\dfrac{3}{2}p_gq_g\\
=&-\dfrac{3}{2}\frac{u_e(g)s_e(g)}{1-r_G(e)}\frac{u_f(g)s_f(g)}{1-r_G(f)}.
\end{align*}

The contributions of $e$ and $f$ again vanish since deleting the edges does not change the relevant resistance differences, while deleting the edges removes their summands. Then, the second part contributes
\begin{equation*}\label{eq:potential-four-difference}
-\frac{3}{2(1-r_G(e))(1-r_G(f))}
\sum_{g\in E(G)}u_e(g)u_f(g)s_e(g)s_f(g).
\end{equation*}
Adding the two parts above proves the desired equality.
\end{proof}

Next, we will prove that the quantities at hand are invariant under changes to the additive constants we choose. 

\begin{lemma}
\label{lem:gauge-integration}
Let $G=(V,E)$ be a connected graph and let $e,f\in E$ be non-bridge edges. Assume $u_e(f)=0$, and let 
\[f(G):=\sum_{g\in E}u_e(g)^2u_f(g)^2+3\sum_{g\in E}u_e(g)u_f(g)s_e(g)s_f(g).\]
Then, $f(G)$ is independent of the constants we choose for $\widetilde v_e$ and $\widetilde v_f$. 
\end{lemma}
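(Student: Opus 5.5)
The plan is to track exactly how $f(G)$ changes when the additive constants move. The transfer currents $u_e(g)=b_g^T v_e$ and $u_f(g)=b_g^T v_f$ are potential differences, so they do not depend on the constants at all. The first sum $\sum_g u_e(g)^2u_f(g)^2$ is therefore already gauge invariant, and only the endpoint sums $s_e,s_f$ in the second sum can change. Replace $\widetilde v_e$ by $\widetilde v_e+c\mathbf 1$ and $\widetilde v_f$ by $\widetilde v_f+d\mathbf 1$. Then $s_e(g)\mapsto s_e(g)+2c$ and $s_f(g)\mapsto s_f(g)+2d$ for every edge $g$, and the second sum changes by exactly
\[
3\sum_{g\in E}u_e(g)u_f(g)\bigl(2c\,s_f(g)+2d\,s_e(g)+4cd\bigr).
\]
So it suffices to prove three identities, each valid for any representatives $\widetilde v_e,\widetilde v_f$:
\[
\sum_{g\in E}u_e(g)u_f(g)=0,\qquad \sum_{g\in E}u_e(g)u_f(g)s_f(g)=0,\qquad \sum_{g\in E}u_e(g)u_f(g)s_e(g)=0.
\]

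For the first identity, write $u_e(g)u_f(g)=b_e^TL^\dagger b_g b_g^T L^\dagger b_f$ and sum over all $g\in E$, including $e$ and $f$ themselves. Using $L=\sum_g b_gb_g^T$ and $L^\dagger L L^\dagger=L^\dagger$, this gives
\[
\sum_{g\in E}u_e(g)u_f(g)=b_e^TL^\dagger b_f=u_e(f),
\]
which vanishes by the standing assumption $u_e(f)=0$.

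For the second identity, observe that $u_f(g)s_f(g)=\widetilde v_f(g^+)^2-\widetilde v_f(g^-)^2=b_g^T w$, where $w$ is the vector of squared potentials, $w(x)=\widetilde v_f(x)^2$. Then
\[
\sum_g u_e(g)\,b_g^T w=b_e^TL^\dagger\Bigl(\sum_g b_gb_g^T\Bigr)w=b_e^T L^\dagger L\, w.
\]
Now $L^\dagger L$ is the orthogonal projection onto $\mathbf 1^\perp$, and $b_e\perp\mathbf 1$ since its entries sum to zero, so $b_e^TL^\dagger L w=b_e^Tw$. Hence the sum equals $\widetilde v_f(e^+)^2-\widetilde v_f(e^-)^2=u_f(e)s_f(e)$, which is zero because $u_f(e)=u_e(f)=0$. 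The third identity follows by the same argument with the roles of $e$ and $f$ exchanged.

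The only step needing care is this second one: recognizing $u_f s_f$ as the edge gradient of the squared potential. Once that is seen, the projection property of $L^\dagger L$ turns it into a single boundary term at the edge $e$, and that term vanishes by $u_e(f)=0$. The remaining care is bookkeeping: the sums in $f(G)$ must range over all of $E$, including $e$ and $f$, so that $\sum_g b_gb_g^T=L$ can be applied.
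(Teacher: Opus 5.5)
Your proposal is correct and follows the paper's proof essentially step for step. You use the same expansion of the change into $6c\sum u_eu_fs_f+6d\sum u_eu_fs_e+12cd\sum u_eu_f$, and the same trick of writing $u_f(g)s_f(g)=b_g^T(\widetilde v_f^{\,2})$, so that each sum collapses via $\sum_g b_gb_g^T=L$ to a boundary term equal to $u_e(f)$ or $u_e(f)s_e(f)$ (up to swapping $e$ and $f$); the only cosmetic difference is that you pass through the projection $L^\dagger L$ onto $\mathbf 1^\perp$, whereas the paper uses $L\widetilde v_h=b_h$ directly.
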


\begin{proof}
The potential differences above are already invariant under changing additive constants since we subtract the potentials. We will be concerned with the part that contains $s_e(g)$ and $s_f(g)$. These by themselves are not invariant under changing additive constants; they are sums of potentials of the endpoints. Suppose we replace $\widetilde v_e$ by $\widetilde v_e+c_1$ and $\widetilde v_f$ by $\widetilde v_f+c_2$ for two distinct constants $c_1$ and $c_2$. Then the difference between $f(G)$ under the choice of $(\widetilde v_e,\widetilde v_f)$ and $f(G)$ under the choice of $(\widetilde v_e+c_1,\widetilde v_f+c_2)$ is
\[6c_1\sum_{g\in E}u_e(g)u_f(g)s_f(g)+6c_2\sum_{g\in E}u_e(g)u_f(g)s_e(g)+12c_1c_2\sum_{g\in E}u_e(g)u_f(g).\]

We prove that each sum above is actually $0$ under the assumptions we have. Recall that the Laplacian can be written as $L=\sum_{g\in E}b_gb_g^T$, and that $u_h(g)=b_g^T\widetilde v_h$. Thus, we have 
\[\sum_{g\in E}u_e(g)u_f(g)=
\sum_{g\in E}b_g^T\widetilde v_eb_g^T\widetilde v_f=\sum_{g\in E}\widetilde v_f^Tb_gb_g^T\widetilde v_e=\widetilde v_f^TL\widetilde v_e=b_f^T\widetilde v_e=u_e(f)=0.\]

For the other two, we do a similar analysis using the Laplacian. We write $\widetilde v_e^2$ for the entrywise square of the potentials. With this, we can write $u_e(g)s_e(g)=b_g^T(\widetilde v_e^2)$. Thus, we have
\[\sum_{g\in E}u_e(g)u_f(g)s_e(g)=\sum_{g\in E}b_g^T(\widetilde v_e^2)b_g^T\widetilde v_f=(\widetilde v_e^2)^TL\widetilde v_f=u_e(f)s_e(f)=0.\]
 The third term is obtained by interchanging $e$ and $f$ in the calculation above. Thus, all the terms are $0$, and the function $f(G)$ is independent of the additive constants in the potentials. 
\end{proof}

Lemma \ref{lem:gauge-integration} gives us the freedom to choose any additive constants we want. We use this freedom to choose the constants as in Theorem \ref{thm:leading-coefficients} to get the sums of squares form we want. 

\begin{lemma}
\label{lem: special-gauge-constants}
Let $G=(V,E)$ be a connected graph and let $e,f$ be non-bridge edges, and suppose $u_e(f)=0$. Let $f(G)$ be defined as in Lemma \ref{lem:gauge-integration}. If we choose $\widetilde{v}_e$ and $\widetilde{v}_f$ such that 
\[\widetilde v_e(f^-)=\widetilde v_e(f^+)=0,\qquad
\widetilde v_f(e^-)=\widetilde v_f(e^+)=0,\]
we have
\[-\dfrac{1}{2}f(G)=\sum_{xy\in E}(\widetilde{v}_e(x)\widetilde{v}_f(y)-\widetilde{v}_e(y)\widetilde{v}_f(x))^2.\]
\end{lemma}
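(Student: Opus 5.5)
The plan is a direct algebraic comparison, edge by edge. By Lemma~\ref{lem:gauge-integration}, $f(G)$ does not depend on the additive constants, so we may work in the stated gauge. This gauge is attainable precisely because $u_e(f)=\widetilde v_e(f^+)-\widetilde v_e(f^-)=0$, so a single shift zeroes $\widetilde v_e$ at both endpoints of $f$; symmetrically for $\widetilde v_f$ at the endpoints of $e$.

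Write $a=\widetilde v_e$, $b=\widetilde v_f$. For an edge $g=xy$ put
\[
\alpha=a_x-a_y,\quad \sigma=a_x+a_y,\quad \beta=b_x-b_y,\quad \rho=b_x+b_y ,
\]
so that $u_e(g)=\pm\alpha$, $s_e(g)=\sigma$, $u_f(g)=\pm\beta$, $s_f(g)=\rho$, with the same sign in both $u$'s. Substituting $a_x=(\sigma+\alpha)/2$, $a_y=(\sigma-\alpha)/2$, and similarly for $b$, gives
\[
a_xb_y-a_yb_x=\tfrac12(\alpha\rho-\sigma\beta).
\]
Hence the right-hand side of the lemma equals $\sum_g\bigl(\tfrac14(\alpha^2\rho^2+\sigma^2\beta^2)-\tfrac12\alpha\beta\sigma\rho\bigr)$, while $-\tfrac12 f(G)=\sum_g\bigl(-\tfrac12\alpha^2\beta^2-\tfrac32\alpha\beta\sigma\rho\bigr)$. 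Let $w=a\circ b$ be the entrywise product. Using $\alpha\rho+\sigma\beta=2(w_x-w_y)$ and $\sigma\rho+\alpha\beta=2(w_x+w_y)$, the difference of the two sides collapses to
\[
D:=\sum_{g=xy\in E}\Bigl((w_x-w_y)^2+(a_x-a_y)(b_x-b_y)(w_x+w_y)\Bigr).
\]
It therefore suffices to show $D=0$.

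For this I would use the discrete product rule for the Laplacian:
\[
(L(a\circ b))_x=a_x(Lb)_x+b_x(La)_x-\sum_{y\sim x}(a_x-a_y)(b_x-b_y).
\]
Pairing with $w$ gives
\[
\sum_g(w_x-w_y)^2=w^TLw=\sum_x w_x\bigl(a_x(Lb)_x+b_x(La)_x\bigr)-\sum_g(a_x-a_y)(b_x-b_y)(w_x+w_y),
\]
since each edge contributes to the double sum at both of its endpoints. Now $La=b_e$ and $Lb=b_f$, because shifting by constants does not change $L$-images. So the first sum reduces to $\pm a^2b$ evaluated at the endpoints of $f$ plus $\pm ab^2$ evaluated at the endpoints of $e$. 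Both vanish in the chosen gauge, since $a=0$ on the endpoints of $f$ and $b=0$ on the endpoints of $e$. This gives $D=0$.

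One bookkeeping point remains: the edges $e,f$ themselves appear in $f(G)$ but have zero summands on both sides. On $f$ we have $u_e(f)=0$ and $a=0$ at both endpoints, and symmetrically on $e$, so the identity holds termwise there as well. The main obstacle I expect is getting the signs and factors right in the product-rule step, namely the factor $2$ from the vertex-to-edge double counting and the orientation convention in $b_g$. I would sanity-check these on a small example, such as $K_4$ with opposite edges, before writing out the final version.
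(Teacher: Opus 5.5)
Your proposal is correct and is essentially the paper's argument. Both proofs use summation by parts against $L=\sum_{g\in E}b_gb_g^T$, with $L\widetilde v_e=b_e$ and $L\widetilde v_f=b_f$, to reduce $f(G)+2\sum_{xy\in E}w_{ef}(x,y)^2$ to $2(\widetilde v_e\widetilde v_f^{\,2})^Tb_e+2(\widetilde v_e^{\,2}\widetilde v_f)^Tb_f$, which vanishes in the chosen gauge. Your $w=a\circ b$ product-rule computation simply derives the paper's per-edge polynomial identity (the two per-edge expressions coincide), and your separate treatment of the edges $e,f$ is unnecessary, since both sums already run over all of $E$.
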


\begin{proof}
For every $g\in V$, choose the additive constants by setting
\[\widetilde{v}_e(g):=v_e(g)-v_e(f^+)
\qquad\text{and}\qquad
\widetilde{v}_f(g):=v_f(g)-v_f(e^+).\]
Then $\widetilde{v}_e(f^+)=0$. Since
\[u_e(f)=\widetilde{v}_e(f^+)-\widetilde{v}_e(f^-)=0,\]
we also have $\widetilde{v}_e(f^-)=0$. Similarly, using 
\[u_f(e)=u_e(f)=0,\]
we have
\[\widetilde{v}_f(e^-)=\widetilde{v}_f(e^+)=0.\]
Thus the required choices of additive constants are well-defined. In what
follows, $s_e$ and $s_f$ are computed using these representatives of the
potentials. For brevity within this proof, we write
\[\phi_g:=\widetilde{v}_e(g),
\qquad \psi_g:=\widetilde{v}_f(g)\qquad (g\in V).\]
For every edge $xy\in E$, we have
\begin{align*}
u_e(xy)^2u_f(xy)^2
&=(\phi_x-\phi_y)^2(\psi_x-\psi_y)^2,\\
u_e(xy)u_f(xy)s_e(xy)s_f(xy)
&=(\phi_x^2-\phi_y^2)(\psi_x^2-\psi_y^2).
\end{align*}
We will use the following algebraic identity which follows by expanding both sides:
\begin{align*}
&(\phi_x-\phi_y)^2(\psi_x-\psi_y)^2
+3(\phi_x^2-\phi_y^2)(\psi_x^2-\psi_y^2)
+2(\phi_x\psi_y-\phi_y\psi_x)^2\\
&\qquad =
2(\phi_x\psi_x^2-\phi_y\psi_y^2)(\phi_x-\phi_y)
+2(\phi_x^2\psi_x-\phi_y^2\psi_y)(\psi_x-\psi_y).
\end{align*}

If we sum over all $xy\in E$ and use the definition of $f(G)$, we have
\begin{align*}
f(G)+2\sum_{xy\in E}(\phi_x\psi_y-\phi_y\psi_x)^2
&=2\sum_{xy\in E}(\phi_x\psi_x^2-\phi_y\psi_y^2)(\phi_x-\phi_y)\\
&\quad+2\sum_{xy\in E}(\phi_x^2\psi_x-\phi_y^2\psi_y)(\psi_x-\psi_y).
\end{align*}

Recall that for any two functions $\eta,\zeta\colon V\to\mathbb R$, the Laplacian satisfies
\[\sum_{xy\in E}(\eta(x)-\eta(y))(\zeta(x)-\zeta(y))=\eta^TL\zeta.\]
Therefore,
\[f(G)+2\sum_{xy\in E}(\phi_x\psi_y-\phi_y\psi_x)^2=2(\phi\psi^2)^TL\phi+2(\phi^2\psi)^TL\psi,\]
where the products $\phi\psi^2$ and $\phi^2\psi$ are taken coordinate-wise.

Adding a constant to a potential does not change its Laplacian, since $L\mathbf 1=0$. Thus, we can write
\[L\phi=b_e\qquad\text{and}\qquad L\psi=b_f.\]
Using these, we rewrite the sum above as
\begin{align*}
f(G)+2\sum_{xy\in E}(\phi_x\psi_y-\phi_y\psi_x)^2&=2(\phi\psi^2)^Tb_e+2(\phi^2\psi)^Tb_f\\
&=2(\phi_{e^+}\psi_{e^+}^2-\phi_{e^-}\psi_{e^-}^2)\\
&\quad+2(\phi_{f^+}^2\psi_{f^+}-\phi_{f^-}^2\psi_{f^-}).
\end{align*}
By our choice of additive constants,
\[\psi_{e^+}=\psi_{e^-}=0\qquad\text{and}\qquad \phi_{f^+}=\phi_{f^-}=0.\]
Hence both terms on the right-hand side vanish, and therefore
\[f(G)=-2\sum_{xy\in E}(\phi_x\psi_y-\phi_y\psi_x)^2.\]
Substituting back $\phi_g=\widetilde{v}_e(g)$ and
$\psi_g=\widetilde{v}_f(g)$ gives
\[-\dfrac{1}{2}f(G)=\sum_{xy\in E}
\left(\widetilde{v}_e(x)\widetilde{v}_f(y)-\widetilde{v}_e(y)\widetilde{v}_f(x)\right)^2,\]
as required.
\end{proof}

At last, we come to the crux of our argument. We use Lemma \ref{lem: special-gauge-constants} to prove that the two-forest ratio differences can be expressed as sums of squares which will be the key to proving the large $\lambda$ fugacity result. 

\begin{lemma}
\label{lem:sums-of-squares}
Let $G=(V,E)$ be a connected graph, and let $e,f$ be non-bridges. Assume that $u_e(f)=0$, and let $Q(G)$ be the two-forest ratio $\dfrac{\tau_2(G)}{\tau(G)}$. Let $\widetilde{v}_e,\widetilde{v}_f$ be potentials as in Lemma \ref{lem: special-gauge-constants}, and set $w_{ef}(x,y):=\widetilde{v}_e(x)\widetilde{v}_f(y)-\widetilde{v}_e(y)\widetilde{v}_f(x)$. We have
\[Q(G-e)+Q(G-f)-Q(G)-Q(G-e-f)=\dfrac{1}{(1-r_G(e))(1-r_G(f))}\sum_{xy\in E}w_{ef}(x,y)^2.\]
\end{lemma}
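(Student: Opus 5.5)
The plan is to chain the three preceding lemmas. Lemma~\ref{lem:intermediate-electrical} already expresses the deleted four-term difference of $Q$ as
\[
Q(G-e)+Q(G-f)-Q(G)-Q(G-e-f)=-\frac{f(G)}{2(1-r_G(e))(1-r_G(f))},
\]
where $f(G)$ is exactly the quantity of Lemma~\ref{lem:gauge-integration}. That lemma was derived using the potentials grounded at a vertex $o$, that is, with $s_e,s_f$ computed from $\widetilde v_h=v_h-v_h(o)\mathbf1$. So the first step is only to record that the left-hand side is an expression in $f(G)$ for that particular grounding.

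The second step is to change the grounding. By Lemma~\ref{lem:gauge-integration}, and using the hypothesis $u_e(f)=0$, the value of $f(G)$ does not depend on the additive constants in $\widetilde v_e$ and $\widetilde v_f$. We may therefore replace the $o$-grounded potentials by the representatives of Lemma~\ref{lem: special-gauge-constants}. These satisfy $\widetilde v_e(f^\pm)=0$ and $\widetilde v_f(e^\pm)=0$, which is possible because $u_e(f)=u_f(e)=0$.

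The third step applies Lemma~\ref{lem: special-gauge-constants}, which gives
\[
-\tfrac12 f(G)=\sum_{xy\in E}w_{ef}(x,y)^2 .
\]
Substituting this into the displayed identity produces the stated formula with prefactor $1/\bigl((1-r_G(e))(1-r_G(f))\bigr)$. The denominators are positive because $e,f$ are non-bridges, so $r_G(e),r_G(f)<1$.

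The only point needing care is a bookkeeping check on the edge sets. The sum in Lemma~\ref{lem:intermediate-electrical} was argued to have vanishing contributions from $g=e,f$. The sum in Lemma~\ref{lem: special-gauge-constants} is over all of $E$, including $e$ and $f$. So I should verify that the two versions of $f(G)$ agree term by term at $g=e,f$ as well.

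For $g=f$, with the special grounding we have $\phi_{f^\pm}=0$, so $u_e(f)=0$ and $s_e(f)=0$. Both summands of $f(G)$ at $g=f$ vanish, and likewise $w_{ef}$ vanishes on $f$; the case $g=e$ is symmetric. For the $o$-grounding, the summand at $g=f$ contains the factor $u_e(f)=0$, so it also vanishes. The edge sets are therefore consistent, and the gauge-invariance step is legitimate across the full sum. I expect this consistency check to be the only mild obstacle; the rest is direct substitution.
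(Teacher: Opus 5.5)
Your proposal is correct and matches the paper's proof: you combine Lemma~\ref{lem:intermediate-electrical} with Lemma~\ref{lem: special-gauge-constants}, and justify the change of grounding by Lemma~\ref{lem:gauge-integration} (the paper invokes that lemma just before Lemma~\ref{lem: special-gauge-constants} rather than inside this two-line proof). Your extra check that the $g=e,f$ summands vanish in every gauge, because of the factors $u_f(e)=u_e(f)=0$, is correct and harmless.
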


\begin{proof}
By Lemma \ref{lem:intermediate-electrical}, we have
\[Q(G-e)+Q(G-f)-Q(G)-Q(G-e-f)=-\dfrac{f(G)}{2(1-r_G(e))(1-r_G(f))}.\]
Moreover, by Lemma \ref{lem: special-gauge-constants}, we have 
\[f(G)=-2\sum_{xy\in E}w_{ef}(x,y)^2.\]
Combining the two, we get the desired equality. 
\end{proof}

We are now ready to prove the main aim of this section, negative edge correlation for the arboreal gas measure at large $\lambda$ fugacity.

\begin{proof}(Proof of Corollary \ref{cor:large-fugacity}.)
If $H_0>0$, then we choose $C_G\geq1$ such that whenever $\lambda>C_G$, we have $H_0 \lambda>\sum_{i=1}^{2n-4}|H_i|$. Then we get the positivity of the Rayleigh difference. Now, suppose $H_0=0$, and we have $H_1>0$. In a similar manner, we choose $C_G\geq1$ sufficiently large so that the largest term $H_1\lambda^{2n-5}$ dominates the rest, i.e. $H_1 \lambda>\sum_{i=2}^{2n-4}|H_i|$. Thus, we again have positivity of the Rayleigh difference. Otherwise, we have $H_1=0$. This means that $\widetilde{v}_e(x)\widetilde{v}_f(y)=\widetilde{v}_e(y)\widetilde{v}_f(x)$.
\end{proof} 

\subsection{Equality Analysis and Examples}
\label{sec:vanishing}
In this section, we further analyse the equality case in the sums of squares expression we proved in Lemma \ref{lem: special-gauge-constants}. It turns out that the equality admits a nice decomposition in terms of graphs and electrical parameters. We observe that after fixing the edges $e,f$ which are non-bridges, the only way $H_1$ vanishes is that 
\[\widetilde{v}_e(x)\widetilde{v}_f(y)=\widetilde{v}_e(y)\widetilde{v}_f(x)\]
holds for all edges $g=xy\in E$. 

We will prove that if this is the case, then the graph $G$ needs to be a union of two distinct graphs one containing the edge $e$, call it $G_e$, the other containing the edge $f$, call it $G_f$, glued on a vertex set $S$ such that current on $S$ viewed from the perspective of the graph $G_e$ and from the perspective of the graph $G_f$ are both $0$. This introduces a very special graph class which fails to satisfy the large $\lambda$ fugacity result. We do not mean here that negative edge correlation fails; we only mean that on this class, to prove negative edge correlation one has to investigate the properties of $H_2$ or in general the Rayleigh difference. We illustrate $G_e,G_f$ and $S$ in Figure \ref{fig:equipotential-decomposition}, and then state the theorem.

\begin{figure}[htbp]
\centering
\EquipotentialFigure
\caption{The equipotential decomposition.  The $e$-current potential is supported on the red side, the $f$-current potential on the blue side, and both vanish on the common interface $S$.  Edges induced by $S$ belong to the amalgamation only once.}
\label{fig:equipotential-decomposition}
\end{figure}

\begin{thm}
\label{thm:equipotential}
Let $G=(V,E)$ be a connected graph, and let $e,f \in E$ be non-bridge edges. We have $H_0=H_1=0$ if and only if $G$ can be written as the sum of two graphs 
\[G=G_e \cup_S G_f, \quad e\in E(G_e), \quad f\in E(G_f),\]
such that for any $g,h\in S$, we have $v_e(g)=v_e(h)$ and $v_f(g)=v_f(h)$ where the potentials are taken in their respective graphs $G_e$ and $G_f$. 
Here $\cup_S$ denotes that we glue the two graphs on the vertex set $S$ with $E(G[S])$ counted only once.
\end{thm}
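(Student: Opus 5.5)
We prove the two directions separately, working throughout with the gauge of Theorem~\ref{thm:leading-coefficients}: $\phi=\widetilde v_e$ vanishes on both endpoints of $f$, and $\psi=\widetilde v_f$ vanishes on both endpoints of $e$. Since $H_0=\tau(G)^2u_e(f)^2$ and $H_1=\tau(G)^2\sum_{xy\in E}(\phi_x\psi_y-\phi_y\psi_x)^2$ (Lemma~\ref{lem:sums-of-squares} together with the reduction $H_1=\tau(G)\tau(G-e-f)(\cdots)$), the condition $H_0=H_1=0$ is equivalent to two statements. First, $u_e(f)=0$. Second, $\phi_x\psi_y=\phi_y\psi_x$ for every edge $xy$.

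\emph{Forward direction.} We define three vertex sets: $S=\{x:\phi_x=\psi_x=0\}$, $V_e=\{x:\phi_x\neq0\}$ and $V_f=\{x:\psi_x\neq0\}$.

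The first step is to show that $V_e\cap V_f=\emptyset$. Suppose instead that some vertex has $\phi_x\psi_x\neq0$. The edge relation says the ratio $\phi/\psi$ is constant across any edge joining two vertices where both functions are nonzero. It also forces any neighbour $y$ of $x$ with $\phi_y=0$ to satisfy $\psi_y=0$, and vice versa. So on the connected component $C$ of $\{\phi\psi\neq 0\}$ containing $x$, we have $\phi=c\psi$, and both functions vanish on the outer vertex boundary of $C$.

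We then use harmonicity. The function $\phi$ is harmonic off $e^\pm$, and $\psi$ is harmonic off $f^\pm$. Moreover $\psi=0$ at $e^\pm$ and $\phi=0$ at $f^\pm$, so no vertex of $C$ is an endpoint of $e$ or of $f$. Hence $\phi$ is harmonic on all of $C$ and vanishes on its boundary. By the maximum principle (every finite component of $G$ minus the boundary reaches the boundary, since $G$ is connected), $\phi\equiv 0$ on $C$, which is a contradiction.

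The same edge relation shows that no edge joins $V_e$ to $V_f$. Indeed, if $\phi_x\neq0=\psi_x$ and $\psi_y\neq0=\phi_y$, then $\phi_x\psi_y\neq 0=\phi_y\psi_x$.

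Now set $G_e$ to be the subgraph induced on $V_e\cup S$ and $G_f$ the subgraph induced on $V_f\cup S$, with the edges of $G[S]$ counted once. Then $e\in E(G_e)$, because $\phi(e^+)-\phi(e^-)=r_G(e)>0$ (as $e$ is a non-bridge, this is not both zero, and the endpoints lie in $V_e\cup S$). Symmetrically, $f\in E(G_f)$.

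Next we check that $\phi|_{G_e}$ is a unit $e$-potential in $G_e$. At a vertex of $V_e$, all of its neighbours lie in $G_e$, so the Laplacian is unchanged there. At a vertex $s\in S$, the neighbours in $V_f$ carry $\phi=0=\phi_s$, so they contribute nothing to the flux. Therefore $L_{G_e}\phi=b_e$ on $G_e$, and $\phi$ is constant (equal to zero) on $S$. The same argument applies to $\psi$ on $G_f$. Since potentials in $G_e$ are determined up to a constant, this gives the required equipotential property.

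\emph{Converse direction.} Suppose we are given the decomposition, with $v_e^{G_e}$ constant on $S$. Shift it so that it equals $0$ on $S$, and extend it by $0$ on $G_f\setminus S$. The resulting function is harmonic in $G$ at every vertex off $e^\pm$. At vertices of $S$, the flux into $G_e$ is the net current through $S$ in $G_e$, which is zero because $S$ is equipotential and, hmm, has zero net current. This step needs care, because equipotential alone does not force zero flux at each individual vertex of $S$. So the decomposition condition should be read as saying that the $G_e$-current leaves no flux into $S$ vertexwise. This is automatic if $S$ is contracted to a point which is not a source or sink, since each vertex of $S$ then receives net current zero as a single lumped node. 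We would verify this by collapsing $S$: the contracted vertex is harmonic, so its total flux is zero. Summing over $S$, and using that the $f$-side potential also vanishes on $S$, restores the full $G$-Laplacian equation. This is because the cross terms only involve sums over $S$.

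Granting this, the extended function is $\widetilde v_e$, and it vanishes on $G_f\supseteq f^\pm$. Hence $u_e(f)=0$, so $H_0=0$. Also, $\phi$ and $\psi$ have disjoint supports with no edge between them, so every term $\phi_x\psi_y-\phi_y\psi_x$ vanishes and $H_1=0$.

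\emph{Main obstacle.} The hardest point is the vertexwise flux issue at $S$ in the converse. We expect to resolve it by arguing that the gluing forces the fluxes from $G_e$ and $G_f$ at each $s\in S$ to cancel, and by checking that the forward construction yields this stronger condition.
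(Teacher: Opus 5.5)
\textbf{Forward direction.} Your forward direction is correct and follows the paper's proof.
\begin{itemize}
\item You split $V$ by the zero sets of $\phi=\widetilde v_e$ and $\psi=\widetilde v_f$.
\item You eliminate the mixed set $\{\phi\psi\neq 0\}$ by the maximum principle. Since $\psi(e^\pm)=0$, $\phi$ is harmonic there, and the edge relation forces both functions to vanish on its vertex boundary.
\item You note there are no $V_e$--$V_f$ edges, and take induced subgraphs.
\end{itemize}
Your check that $L_{G_e}(\phi|_{V(G_e)})=b_e$ uses that the $G_f$-neighbours of $s\in S$ carry $\phi=0=\phi_s$. The paper only asserts this step, so this part of your write-up is, if anything, more complete.

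\textbf{Converse direction.} As written, the converse is not proved. You ``grant'' zero flux at each individual vertex of $S$, and you propose contracting $S$ to a single node. Contraction only gives the aggregate identity $\sum_{s\in S}(L_{G_e}v)(s)=0$, so it cannot give the vertexwise statement you need.

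The obstacle is not real, and you do not need to reinterpret the hypothesis. Every $s\in S$ is a vertex of $G_e$, so the potential taken in $G_e$ already satisfies $(L_{G_e}v_e^{G_e})(s)=b_e(s)$ at each such $s$. This is part of its definition. It also covers the case where an endpoint of $e$ lies in $S$, which your ``not a source or sink'' device would exclude.

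Let $\Phi$ be $v_e^{G_e}$ shifted to vanish on $S$ and extended by $0$ to $V(G_f)\setminus S$. Then $L_G\Phi=b_e$ holds at every vertex:
\begin{itemize}
\item At $s\in S$, every edge of $G$ not in $G_e$ joins $s$ to a vertex of $V(G_f)$, where $\Phi=0=\Phi(s)$. Hence $(L_G\Phi)(s)=(L_{G_e}\Phi)(s)=b_e(s)$.
\item At a vertex of $V(G_e)\setminus S$, all neighbours lie in $G_e$, so the Laplacian is unchanged.
\item On $V(G_f)\setminus S$, $\Phi$ and all neighbouring values are $0$. Also $e^\pm\in V(G_e)$ does not meet this set, so $b_e$ vanishes there too.
\end{itemize}
So $\Phi$ is the $G$-potential, and it vanishes at $f^\pm$, which puts it in the special gauge. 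With this, your vanishing argument for $H_0$ and $H_1$ goes through. This is exactly the paper's converse (normalize each side to $0$ on $S$ and extend by zero); the paper simply takes this one-line verification for granted.
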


\begin{proof}
The sums of squares form from Lemma \ref{lem:sums-of-squares} shows that $H_0=H_1=0$ if and only if under the choice of special additive constants we have
\[\widetilde{v}_e(x)\widetilde{v}_f(y)=\widetilde{v}_e(y)\widetilde{v}_f(x), \quad \text{for every} \;\; xy\in E(G).\]

We partition the vertices into the following four sets: 
\begin{align*}
V_e&=\{x:\widetilde{v}_e(x)\neq0,\ \widetilde{v}_f(x)=0\},&
V_f&=\{x:\widetilde{v}_e(x)=0,\ \widetilde{v}_f(x)\neq0\},\\
S&=\{x:\widetilde{v}_e(x)=\widetilde{v}_f(x)=0\},&
V_{ef}&=\{x:\widetilde{v}_e(x)\widetilde{v}_f(x)\neq0\}.
\end{align*}

We observe that $\widetilde{v}_e(x)\widetilde{v}_f(y)=\widetilde{v}_e(y)\widetilde{v}_f(x)$ forbids edges from $V_e$ to $V_f\cup V_{ef}$ and similarly from $V_f$ to $V_e\cup V_{ef}$. We define the vertex boundary of a set $A\subset V(G)$ as $\partial_V A:=\{y \in V\setminus A: \exists x\in A \;\; \text{with} \;\; xy\in E\}$. Then, we note that $\partial_V(V_{ef})\subseteq S$ since there are no edges from $V_{ef}$ to either $V_e$ or $V_f$. We also note that by our choice of potentials, none of the endpoints of $e$ or $f$ belong to $V_{ef}$. Thus, the potentials there are harmonic functions. Then, by the maximum principle along the boundary, we observe that the potentials $\widetilde{v}_{e}$ and $\widetilde{v}_f$ must vanish on $V_{ef}$ which contradicts the definition of $V_{ef}$. Thus, we have $V_{ef}=\emptyset$, and the partition of the vertex set as $V(G)=V_e\sqcup S \sqcup V_f$. 

We set $G_e:=G[V_e\sqcup S]$, and $G_f:=G[V_f\sqcup S]$. Observe that every edge lies in either of the graphs $G_e$ or $G_f$ by construction or is in the induced graph $G[S]$, and by definition we have that the potentials induced by the sets $G_e$ and $G_f$ on $S$ are all $0$, or equi-potentials if we change the additive constant. 

Conversely, suppose there is a partitioning with the vertices in $S$ having equi-potentials. We normalize each side's potential to $0$ on $S$ by just shifting the additive constant and extend it to vertices in the other side by just setting $\widetilde v_f(x)=0$ for all $x\in V(G_e)$ and $\widetilde v_e(x)=0$ for all $x\in V(G_f)$. All the conditions are satisfied, and each summand in the formula vanishes by construction.
\end{proof}

It is important to note that Theorem \ref{thm:equipotential} does not state that the entire Rayleigh difference vanishes. Even though the second coefficient may vanish, the rest need not. A two-vertex gluing due to Wagner makes this apparent. The following is the graphical form of Wagner's independence set Rayleigh difference across a matroid $2$-sum \cite[Corollary 5.7]{wagner2008negatively}. Thus, for completeness, we present the proof of this special case.

\begin{prop}
\label{prop: wagner-2-sums}
Let $G=G_1\cup_{x,y} G_2$ have disjoint edge sets, with $e\in E(G_1)$ and $f\in E(G_2)$. Let $a_i$ and $b_i$ be the forest generating polynomials of $G_i$ restricted to $x\leftrightarrow y$ and $x\not \leftrightarrow y$ respectively. Moreover, we let $a_1^e,b_1^e$ and $a_2^f,b_2^f$ impose in addition that the indicated edge is present in the configurations. Then, we have 
\[\lambda^2\Rayleigh_{ef}(\ForestZ_G)(\lambda\mathbf1)=(a_1^eb_1-b_1^ea_1)(a_2^fb_2-b_2^fa_2).\]
\end{prop}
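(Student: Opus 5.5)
The plan is to decompose every spanning forest of $G$ along the two-vertex cut $\{x,y\}$ and then apply the deletion identity of Lemma~\ref{lem:deletion-identity}. Since $E(G_1)\cap E(G_2)=\emptyset$ and $V(G_1)\cap V(G_2)=\{x,y\}$, a set $F=F_1\sqcup F_2$ with $F_i\subseteq E(G_i)$ is a forest of $G$ exactly when each $F_i$ is a forest and $x,y$ are not connected in both $F_1$ and $F_2$. A cycle in $F_1\cup F_2$ either lies in one side, or passes through both $x$ and $y$ and uses an $x$--$y$ path in each side. This gives
\[
\ForestZ_G=(a_1+b_1)(a_2+b_2)-a_1a_2=a_1b_2+b_1a_2+b_1b_2 ,
\]
with all polynomials evaluated at $\lambda\mathbf1$.

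Next I would write the analogous formulas for the deleted graphs. Deleting $e$ affects only $G_1$: the forests of $G_1-e$ with or without $x\leftrightarrow y$ have generating functions $a_1-a_1^e$ and $b_1-b_1^e$. Likewise $G_2-f$ gives $a_2-a_2^f$ and $b_2-b_2^f$. Write $\Phi(\alpha_1,\beta_1;\alpha_2,\beta_2)=\alpha_1\beta_2+\beta_1\alpha_2+\beta_1\beta_2$. Then
\[
\ForestZ_{G-e}=\Phi(a_1-a_1^e,b_1-b_1^e;a_2,b_2),\qquad \ForestZ_{G-f}=\Phi(a_1,b_1;a_2-a_2^f,b_2-b_2^f),
\]
and similarly for $G-e-f$. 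Since $\Phi$ is bilinear in the vectors $(\alpha_1,\beta_1)$ and $(\alpha_2,\beta_2)$, we can view it as $p^T M q$ with
\[
M=\begin{pmatrix}0&1\\1&1\end{pmatrix},\qquad p=(\alpha_1,\beta_1)^T,\quad q=(\alpha_2,\beta_2)^T .
\]
Set $P=(a_1,b_1)^T$, $P'=(a_1-a_1^e,b_1-b_1^e)^T$, $R=(a_2,b_2)^T$ and $R'=(a_2-a_2^f,b_2-b_2^f)^T$. The deleted-graph difference then becomes
\[
(P'^TMR)(P^TMR')-(P^TMR)(P'^TMR').
\]

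The key algebraic step is the standard identity for a bilinear form $\langle p,q\rangle=p^TMq$ on $2$-dimensional spaces:
\[
\langle p',q\rangle\langle p,q'\rangle-\langle p,q\rangle\langle p',q'\rangle=-\det M\cdot\det[p\ p']\cdot\det[q\ q'] .
\]
Here $\det M=-1$. To verify it, note that the left side is alternating in $(p,p')$ and in $(q,q')$, so it is a constant multiple of the product of the two determinants. The constant can be read off by evaluating at the standard basis vectors.

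It remains to compute the determinants. We have $\det[P\ P']=a_1(b_1-b_1^e)-b_1(a_1-a_1^e)=a_1^eb_1-b_1^ea_1$, and similarly $\det[R\ R']=a_2^fb_2-b_2^fa_2$. Finally, Lemma~\ref{lem:deletion-identity} with $z_e=z_f=\lambda$ identifies the difference as $\lambda^2\Rayleigh_{ef}(\ForestZ_G)(\lambda\mathbf1)$, which gives the claimed formula.

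I expect the main obstacle to be bookkeeping rather than any conceptual difficulty. The sign conventions have to match exactly, and the cycle characterization at the cut must be checked carefully, in particular when $x$ and $y$ are adjacent inside one of the $G_i$. That case is harmless, because such an edge simply belongs to that side's polynomials.
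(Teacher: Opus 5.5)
Your proof is correct and follows essentially the same route as the paper: you decompose forests across the cut $\{x,y\}$ to get $\ForestZ_G=a_1b_2+b_1a_2+b_1b_2$, then evaluate a Rayleigh-type difference of four such expressions. The differences are only in bookkeeping---you use the deleted graphs and Lemma~\ref{lem:deletion-identity} where the paper uses the edge-present sums $Z_G^{\mathcal F}[\{e\}]$, $Z_G^{\mathcal F}[\{f\}]$, $Z_G^{\mathcal F}[\{e,f\}]$, and your $2\times 2$ determinant identity for the bilinear form with $\det M=-1$ explains the factorization that the paper obtains by direct expansion.
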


\begin{proof}
A forest of $G$ restricts to forests on both of the graphs $G_1$ and $G_2$. Only one can connect $x$ to $y$; otherwise in the union we would have a cycle passing through $x$ and $y$ since the edge sets of $G_1$ and $G_2$ are disjoint. Thus, we can write the partition function for the arboreal gas measure as 
\[\ForestZ_G=a_1b_2+a_2b_1+b_1b_2.\]
For $S\subseteq E(G)$, let $Z_G^{\mathcal F}[S]:=\sum_{F\in\mathcal F(G):\,S\subseteq F}\lambda^{|F|}$, retaining all edge weights. We also have 
\begin{align*}
Z_G^{\mathcal F}[\{e\}]&=a_1^eb_2+b_1^ea_2+b_1^eb_2,\\
Z_G^{\mathcal F}[\{f\}]&=a_1b_2^f+b_1a_2^f+b_1b_2^f,\\
Z_G^{\mathcal F}[\{e,f\}]&=a_1^eb_2^f+b_1^ea_2^f+b_1^eb_2^f,
\end{align*}
Here $Z_G^{\mathcal F}[\{e\}],Z_G^{\mathcal F}[\{f\}],Z_G^{\mathcal F}[\{e,f\}]$ indicate the forest sums with the appropriate edges present. 

Now, if we just calculate $Z_G^{\mathcal F}[\{e\}]Z_G^{\mathcal F}[\{f\}]-Z_G^{\mathcal F}[\{e,f\}]\ForestZ_G$, we get the desired equality after noting that we have extra $\lambda$ terms coming from $e$ and $f$ not present in the Rayleigh difference. 
\end{proof}

We end this section by giving several examples which establish the strictness and the equality in $H_1$. The examples are each of a different flavor, and we include each one to show how the two currents might interact with each other when, by symmetry, we get $H_0=0$. In each example, we give the currents through $e$, and establish $u_e(f)$. We start with the most common example which is the complete graph, and continue with complete joins, torus grid graphs, Petersen graphs and $K_4$ blocks which will be of special interest in the subsequent part. 

\begin{example}
Let $K_n$ denote the complete graph on $n\geq 4$ vertices $V=[n]:=\{1,2,\ldots,n\}$. Let $e=12$ and $f=34$ be disjoint edges. We inject unit current through $e^+$ ($f^+$ when we work with $f$) and extract it through $e^-$ ($f^-$ when we work with $f$). By symmetry, we have $u_e(f)=0$, and the special potential $\widetilde{v}_e$ we established takes values $\frac{1}{n}$ and $-\frac{1}{n}$ on vertices $1$ and $2$ respectively while vanishing at every other vertex. The potential $\widetilde{v}_f$ is similar except it takes values on vertices $3,4$ and vanishes otherwise. One can calculate these values using Kirchhoff's law and noting that by symmetry every vertex except the source and the sink must have the same potential. Thus, in the sums of squares formula we have, only four terms remain nonzero, which are the edges $13,14,23,24$ comprised of the endpoints of $e$ and $f$. Thus, we get
\[H_1=\tau(K_n)^2\dfrac{4}{n^4}=4n^{2n-8}>0.\]

If $e,f$ are adjacent edges, we observe that $H_0>0$; thus we have the large $\lambda$ fugacity result immediately. 
\end{example}

We consider next the complete joins. Let $G_1,\ldots, G_r$ be disjoint graphs. We define the complete join $G:=G_1\lor \ldots \lor G_r$ to be the graph with vertex set $\bigcup_{i\in [r]} V(G_i)$ and the edge set $\bigcup_{i\in [r]} E(G_i)\cup \bigcup_{i\neq j \in [r]}\{xy: x\in G_i,y\in G_j\}$, i.e. we put all edges across the parts and keep the edges inside the graphs $G_i$. For the next three examples, we add the following Figure \ref{fig:symmetric-examples} to show the edges $e,f$ and the symmetry in the graphs presented. 

\begin{figure}[htbp]
\centering
\SymmetricExamplesFigure
\caption{Three ways symmetry or a complete join can annihilate the leading transfer-current term.  In every part $e$ is red and $f$ is blue.  Part (a) shows the four relevant parts of a complete join.  Part (b) shows the edges $e,f$ and the reflection on $C_3\square C_3$.  Part (c) shows the edges $e,f$ in the Petersen graph.}
\label{fig:symmetric-examples}
\end{figure}

\begin{example}
Let $G=G_1\lor \ldots \lor G_r$ with $r\geq 4$. Let $e=xy$ with $x\in G_1,y\in G_2$, and let $f=st$ with $s\in G_3,t\in G_4$ be two disjoint edges. Let $v$ be the potential induced by injecting one unit of current through $e^+$ and extracting it from $e^-$, and let $x\in G_i$ with $i\geq 3$. Since $x$ is not a source or a sink vertex, we will use the harmonicity of the potential function with the fact that $G_i$ is adjacent to all the vertices outside $G_i$ to write 
\[(L_{G_i}+(|V(G)|-|V(G_i)|)I)v|_{G_i}=\left(\sum_{y\not \in G_i}v(y)\right)\mathbf{1}.\]

The matrix $(L_{G_i}+(|V(G)|-|V(G_i)|)I)v|_{G_i}$ is positive definite and sends constant vectors to constant vectors; thus $v$ is constant on $G_i$. With this, we have \[v(x)=\dfrac{1}{|V(G)|}\sum_{y \in V(G)}v(y).\]

Thus, in particular, we observe that $u_e(f)=0$ since $f$ has endpoints in parts $G_3$ and $G_4$. If we subtract this potential value, we observe that $\widetilde v_e$ vanishes on $G_3\cup G_4$. The same analysis can be made to produce a potential $\widetilde v_f$ which vanishes on $G_1\cup G_2$, especially on the endpoints of $e$. 

We know that the potential $\widetilde v_e$ has distinct values at the endpoints of $e=xy$ and $\widetilde v_f$ has distinct values at the endpoints of $f=st$. We can choose the potentials so that $\widetilde v_e$ has a nonzero value on $G_1\cup G_2$ and $\widetilde v_f$ has a nonzero value on $G_3\cup G_4$. Now, the complete join contains all the edges across the endpoints $x,y,s,t$. Thus, there exists a nonzero value in the sums of squares formula we have, which in turn establishes \[H_1/\tau(G)^2=\sum_{cd\in E(G)} w_{ef}(c,d)^2\geq w_{ef}(x,s)^2+w_{ef}(x,t)^2+w_{ef}(y,s)^2+w_{ef}(y,t)^2>0.\]

We note that this argument includes multipartite graphs with edges $e,f$ having endpoints in four distinct parts. 
\end{example}

Before proceeding with the $K_4$ blocks, we will do two concrete examples, one on the torus grid graph $C_3 \square C_3$, where $\square$ denotes the Cartesian product of two graphs, i.e. $\{(x_1,y_1),(x_2,y_2)\}\in E(G_1\square G_2)$ if $x_1x_2\in E(G_1)$ or $y_1y_2\in E(G_2)$, and the second one for the Petersen graph. 

\begin{example}
In $C_3 \square C_3$, we take $e=\{(0,0),(1,0)\}$ and $f=\{(0,1),(0,2)\}$. The reflection defined as $(x,y) \to (x,-y)$ fixes the orientation of the edge $e$ and reverses the orientation of the edge $f$. Thus, we must have $u_e(f)=0$. We write the potentials $\widetilde{v}_e$ and $\widetilde{v}_f$ as follows, with the rows indexed by the first coordinate and the columns indexed by the second coordinate:
\[18\widetilde v_e=
\begin{pmatrix}-3&0&0\\5&2&2\\1&1&1\end{pmatrix},
\qquad
18\widetilde v_f=
\begin{pmatrix}0&-4&4\\0&-1&1\\0&-1&1\end{pmatrix}.\]

We can see that the first potential vanishes at the endpoints of $f$ and the second vanishes at the endpoints of $e$. If we calculate the sums of squares formula we had, we get
\[\sum_{xy\in C_3\square C_3}(\widetilde{v}_e(x)\widetilde{v}_f(y)-\widetilde{v}_e(y)\widetilde{v}_f(x))^2=\frac{29}{5832},\]
and looking at any cofactor of the Laplacian yields \[\tau(C_3\square C_3)=\operatorname{det}(L|_{8\times 8})=11664.\]

Hence, we have $H_1=11664^2\frac{29}{5832}=676512$. We do not go into details, but the same reflection works in $C_m\square C_{2k+1}$ for the corresponding horizontal and antipodal vertical edges. 

For the Petersen graph $G(5,2)$, we label the outer pentagon with $\{0,\dots,4\}$ and the inner pentagon with $\{0',\ldots,4'\}$ and join the vertices $i'$ to $(i+2)'$ modulo $5$. Let $e=01$ be oriented from $0$ to $1$, and let $f=33'$ be oriented from $3$ to $3'$. Then in the order $(0,\ldots,4,0',\ldots,4')$, the potentials become 
\[10\widetilde{v}_e=(-3,3,1,0,-1,-1,1,0,0,0),
\qquad
10\widetilde{v}_f=(0,0,-1,-3,-1,1,1,0,3,0).\]

We observe that $u_e(f)=0$. If we sum over the sums of squares formula we have, we get 
\[\sum_{xy\in G(5,2)}(\widetilde{v}_e(x)\widetilde{v}_f(y)-\widetilde{v}_e(y)\widetilde{v}_f(x))^2=\frac{9}{1250},\]
and the Laplacian cofactor yields 
\[\operatorname{det}(L|_{9\times 9})=2000.\]
Thus, we have $H_1=2000^2\frac{9}{1250}=28800$. 
\end{example}

\begin{example}
\label{ex:k4-chain}
Let $B_n$ be the graph on the $2n$ vertices $x_1,\ldots,x_n,y_1,\ldots,y_n$ with the edge set \[\bigcup_{i=1}^{n-1}\binom{\{x_i,y_i,x_{i+1},y_{i+1}\}}{2}.\] Let $e=x_1y_1$ and $f=x_ny_n$. Hence $B_n$ is a chain of $K_4$'s each glued along two vertices and the distinguished edges $e$ and $f$ are the first and the last vertical edges as can be seen in Figure \ref{fig:k4-chain}.

\begin{figure}[htbp]
\centering
\KFourChainFigure
\caption{The $K_4$ chain $B_4$.  Each consecutive four-set induces a copy of $K_4$, and consecutive blocks share a terminal edge. Passing across one block sends the connected/disconnected terminal-state vector $(a_k,b_k)^T$ to $M(a_k,b_k)^T$.}
\label{fig:k4-chain}
\end{figure}

We use Proposition \ref{prop: wagner-2-sums} to show that
\[\lambda^2\Rayleigh_{ef}(\ForestZ_{B_n})(\lambda\mathbf1)=2^{n-1}\lambda^{2n}(1+2\lambda)^{n-1}.\]

Let $B_k$ denote the chain on $\{x_1,\ldots,x_k,y_1,\ldots,y_k\}$, and let $a_k,b_k$ denote the generating polynomials of forests on $B_k$ where $x_k \leftrightarrow y_k$ and $x_k \not \leftrightarrow y_k$ respectively. Similarly, let $a_k^e,b_k^e$ denote the generating polynomials for which the edge $e$ is present. Let \[w_k:=\binom{a_k}{b_k}, \quad w_k^e:=\binom{a_k^e}{b_k^e}.\] We will examine how these change when we add a new $K_4$. For $a_{k+1}$, we'll first look at what happens when $x_k\leftrightarrow y_k$. Suppose first that $x_{k+1}y_{k+1}$ is present in the forest. Then we can pick either one edge from the crossing $4$ edges or none, since we need to preserve acyclicity and $x_k\leftrightarrow y_k$. Thus, there is a multiplicative factor of $\lambda(1+4\lambda)$ to the existing configurations. Now, suppose $x_{k+1}y_{k+1}$ is absent. Then, to get to $x_{k+1}\leftrightarrow y_{k+1}$, we need exactly one edge going to each vertex; this leads to four choices with weights $4\lambda^2$. Hence, from connected $a_k$ to connected $a_{k+1}$ we have a multiplicative factor of $\lambda+8\lambda^2$. A similar analysis leads to the following matrix: 
\begin{equation*}\label{eq:k4-transfer}
M=\begin{pmatrix}
\lambda(1+8\lambda)&\lambda(1+2\lambda)(1+4\lambda)\\
1+4\lambda&(1+2\lambda)^2
\end{pmatrix}.
\end{equation*}

The other three entries of the matrix are calculated in the same way and arise from the following table:
\[\begin{array}{c|cc}
& x_k\leftrightarrow y_k
& x_k\not\leftrightarrow y_k\\ \hline
x_{k+1}\leftrightarrow y_{k+1}
& \lambda+8\lambda^2
& \lambda+6\lambda^2+8\lambda^3\\[1mm]
x_{k+1}\not\leftrightarrow y_{k+1}
& 1+4\lambda
& 1+4\lambda+4\lambda^2
\end{array}\]

This leads to the transformation $Mw_k=w_{k+1}$, and $Mw_k^e=w_{k+1}^e$ with $\operatorname{det}(M)=2\lambda^2(1+2\lambda)$.
Let $W_k=a_k^eb_k-b_k^ea_k=\operatorname{det}\left(\begin{pmatrix}
a_k^e & a_k \\
b_k^e & b_k
\end{pmatrix}\right)$. We have $\operatorname{det}(M)W_k=W_{k+1}$, and we also observe that $W_1=\lambda$ since $w_1=\binom{\lambda}{1}$ and $w_1^e=\binom{\lambda}{0}$. Thus, we have \[W_k=\operatorname{det}(M)^{k-1}\lambda.\] 
Applying the recurrence $n-1$ times gives
\[W_n=\lambda\operatorname{det}(M)^{n-1}=\lambda(2\lambda^2(1+2\lambda))^{n-1}.\]
We apply Proposition \ref{prop: wagner-2-sums}, by considering the
decomposition \[B_n=(B_n-f)\mathbin{\amalg_{\{x_n,y_n\}}}\{f\}.\]
We claim that $W_n$ is also the first factor associated with $B_n-f$.
Indeed, let $\bar{a}_n,\bar{b}_n,\bar{a}_n^e,\bar{b}_n^e$ denote the corresponding generating polynomials for $B_n-f$. Adding $f$ gives $a_n=\bar{a}_n+\lambda\bar{b}_n$, $b_n=\bar{b}_n$, $a_n^e=\bar{a}_n^e+\lambda\bar{b}_n^e$, and
$b_n^e=\bar{b}_n^e$.
Therefore,
\[W_n=a_n^eb_n-b_n^ea_n=(\bar{a}_n^e+\lambda\bar{b}_n^e)\bar{b}_n
-\bar{b}_n^e(\bar{a}_n+\lambda\bar{b}_n)=\bar{a}_n^e\bar{b}_n-\bar{b}_n^e\bar{a}_n.\]
Thus, adding $f$ does not change the first factor in
Proposition~\ref{prop: wagner-2-sums}. The graph consisting only of
$f$ contributes the second factor $\lambda$. Thus, we have
\[\begin{aligned}
\lambda^2\Rayleigh_{ef}(\ForestZ_{B_n})(\lambda\mathbf1)=\lambda W_n=\lambda(2\lambda^2(1+2\lambda))^{n-1}\lambda=2^{n-1}\lambda^{2n}(1+2\lambda)^{n-1}.
\end{aligned}\]
\end{example}

\section{Random-Cluster Measure with \texorpdfstring{$0<q<1$}{0<q<1}}
\label{sec:complete-proof}
In this section, we will prove Theorem \ref{thm:complete-graph} establishing negative edge correlation of the edges under the random-cluster model when the weights are all equal. The proof method will make it apparent why we don't allow arbitrary weights. 

We recall the bridge identity we established in Section \ref{sec: preliminarylemmas}, Proposition \ref{prop:bridge-identity}. Let $G=(V,E)$ be a connected graph, let $e=ab$ and $f=cd$ be distinct edges, and let $\RCZ_G$ be the random-cluster model with $q\neq 1$. Then, we have 
\[\dfrac{\Rayleigh_{ef}(\RCZ_G)}{(1-q)^2}=\Rayleigh_{ef}^b+\dfrac{Z_{ef}(J_{ef})Z_{ef}}{1-q},\]
where $J_{ef}$ is tested on $A\setminus\{e,f\}$ as defined in Section~\ref{sec: preliminarylemmas}, and
\[
Z_{ef}(\mathcal E)=\sum_{\substack{A\subseteq E(G)\\e,f\in A}}
\mathbf1_{\mathcal E}(A)q^{\kappa(A)}\prod_{g\in A\setminus\{e,f\}}z_g.
\]
In particular, $Z_{ef}=\partial_{ef}\RCZ_G$ removes the two marked-edge weights.

We will prove the main theorem by comparing weights among configuration classes on $K_n$. We fix a common weight $\lambda\geq 1$, so that for all $g\in E$, we have $z_g=\lambda$. Since the Rayleigh difference is independent of $z_e,z_f$, we set these two weights to $1$ throughout the proof. For $t\geq 1$, we first consider the spanning subgraphs with $t$ components, where $e,f$ are present. Over these configurations $A$, the contribution from $\prod_{g\in A}z_g$ becomes $\lambda^{|A\setminus \{e,f\}|}$. Define the following weighted sums for this configuration class which partition the configurations over $\{e,f\}$ pair being bridge-bridge, bridge-non-bridge, and joint bridges: 
\begin{align*}
a_t&:=\sum_{A: \kappa(A)=t}\lambda^{|A\setminus \{e,f\}|}\\
p_t^e&:=\sum_{\substack{A: \kappa(A)=t\\ e \: \text{is a bridge}}}\lambda^{|A\setminus \{e,f\}|}\\
p_t^f&:=\sum_{\substack{A: \kappa(A)=t\\ f \: \text{is a bridge}}}\lambda^{|A\setminus \{e,f\}|}\\
b_t&:=\sum_{\substack{A: \kappa(A)=t\\ e,f \: \text{are bridges}}}\lambda^{|A\setminus \{e,f\}|}\\
j_t&:=\sum_{\substack{A: \kappa(A)=t\\ e,f \: \text{are joint bridges}}}\lambda^{|A\setminus \{e,f\}|}.
\end{align*}

Using these, we can define the corresponding weights each class gets under the random-cluster model with connectivity parameter $q$:
\[A(q)=\sum_ta_tq^t,\quad P_e(q)=\sum_tp_t^eq^t,\quad P_f(q)=\sum_tp_t^fq^t,\quad B(q)=\sum_tb_tq^t,\quad J(q)=\sum_tj_tq^t.\]

With this notation at hand, we can rewrite the bridge identity. Recall $\Rayleigh_{ef}^b:=Z_{ef}(B_e)Z_{ef}(B_f)
-Z_{ef}(B_e\cap B_f)Z_{ef}=P_e(q)P_f(q)-B(q)A(q)$. Similarly, we have $Z_{ef}(J_{ef})=J(q)$. Hence, we have
\[\dfrac{\Rayleigh_{ef}(\RCZ_{K_n})}{(1-q)^2}=P_e(q)P_f(q)-A(q)B(q)+\dfrac{J(q)A(q)}{1-q}.\]

If we expand all the terms, we can write the equality in a more succinct way, 
\[\dfrac{\Rayleigh_{ef}(\RCZ_{K_n})}{(1-q)^2}=\sum_{k\geq 2}c_kq^k,\]
where $c_k=\mathcal{P}_k+\mathcal{J}_k-\mathcal{B}_k$ can be expressed as a sum of coefficients of the Cauchy products of the terms above,
\[\mathcal{P}_k=\sum_{s+t=k}p_s^ep_t^f, \quad \mathcal{J}_k=\sum_{s+t\leq k}j_sa_t, \quad \mathcal{B}_k=\sum_{s+t=k}b_ta_s.\] 

The main idea of the proof and the following lemmas is to compare the coefficients that we arrived at using the Cauchy product and the bridge identity. While comparing the coefficients, we will use some geometric moves along the edges of the configurations in order to establish several injections. Figure \ref{fig:bridge-injections} gives the four moves we mainly employ. 

\begin{figure}[htbp]
\centering
\InjectionFigure
\caption{The bridge-to-joint-bridge injections.  Parts (a) and (b) add an edge between the two outer blocks and preserve the component count.  Part (c) attaches the outer blocks to a canonically chosen outside component, while part (d) joins two different marked components crosswise; these two moves lower the component count by one.  In every case the green edges can be recovered from the image.}
\label{fig:bridge-injections}
\end{figure}

We begin with the following lemma which compares the bridge configurations to joint bridge configurations when $e$ and $f$ are adjacent edges. We will abuse the notation of $a_t,b_t,j_t$ a little bit. These are polynomials consisting of monomials coming from configurations. If a configuration $A$ is counted in one of them, we will say it belongs to it, even though technically, it belongs to the set the polynomial counts. 

\begin{lemma}
\label{lem:adjacent-injection}
Let $e=ab$, and $f=bc$ be two adjacent edges in $K_n$. We have $b_t\leq j_t$ for every $t\geq 1$. 
\end{lemma}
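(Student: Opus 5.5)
We construct an explicit weight-preserving injection from the configurations counted by $b_t$ into those counted by $j_t$, following move (a) of Figure~\ref{fig:bridge-injections}.

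\textbf{The map.} Let $A\subseteq E(K_n)$ contain $e=ab$ and $f=bc$, with $\kappa(A)=t$ and both $e$ and $f$ bridges of $A$. Deleting $e$ and $f$ splits the component of $b$ into three pieces, $[a]$, $[b]$ and $[c]$, containing $a$, $b$ and $c$ respectively. These are pairwise distinct: $[a]\neq[b]$ and $[b]\neq[c]$ because $e$ and $f$ are bridges, and $[a]\neq[c]$ since otherwise $e$ and $f$ would lie on a common cycle. Define
\[
\Phi(A)=A\cup\{ac\}.
\]
The edge $ac$ exists in $K_n$ and is not already in $A$, because $[a]\neq[c]$ in $A\setminus\{e,f\}$.

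\textbf{$\Phi(A)$ lies in the $j_t$ class.}
\begin{enumerate}
\item In $\Phi(A)$, the sets $[a]$, $[b]$, $[c]$ are joined into a single component through $e$, $f$ and $ac$, which form a triangle. All other components are unchanged, so $\kappa(\Phi(A))=t$.
\item Neither $e$ nor $f$ is a bridge of $\Phi(A)$, since each lies on the triangle $a\,b\,c$.
\item In $\Phi(A)\setminus\{e,f\}$ the components are $[a]\cup[c]$ (joined by $ac$) and $[b]$. So $a$ and $b$ are disconnected, $b$ and $c$ are disconnected, and $\{C(a),C(b)\}=\{C(b),C(c)\}$. This is exactly the event $J_{ef}$.
\end{enumerate}
The weight is $\lambda^{|A\setminus\{e,f\}|+1}$, so $\Phi$ multiplies weights by $\lambda$.

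\textbf{Injectivity.} $\Phi$ is injective because $A=\Phi(A)\setminus\{ac\}$: the added edge is determined by the fixed labels $a$ and $c$. Summing over the injection gives
\[
j_t\geq\lambda b_t\geq b_t \qquad\text{for } \lambda\geq1.
\]

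\textbf{Where care is needed.} The main point to check is that the relevant event is $J_{ef}$ rather than $B_e\cup B_f$, and this is settled by step 3 above. If the convention for $j_t$ instead uses weight $\lambda^{|A\setminus\{e,f\}|}$ without the extra factor, the inequality $j_t\geq\lambda b_t$ is still strictly stronger than what is needed, so the hypothesis $\lambda\geq1$ is exactly what allows the extra edge.
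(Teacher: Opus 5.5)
Your proof is correct and uses the same injection as the paper: $A\mapsto A\cup\{ac\}$, inverted by deleting $ac$, which multiplies weights by $\lambda\geq1$. You also verify explicitly that $ac\notin A$ and that $e,f$ become joint bridges with $\kappa$ unchanged, which the paper only asserts. Your closing remark about an alternative convention for $j_t$ is muddled but harmless, since the paper's $j_t$ uses weight $\lambda^{|A\setminus\{e,f\}|}$, and that is exactly where the extra factor $\lambda$ comes from.
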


\begin{proof}
 Suppose $A$ is a configuration that is in $b_t$, so that both $e$ and $f$ are bridges. Note that the edge $ac$ must be absent. Then the map $A \to A+ac$ is a map from $b_t$ into $j_t$ since with $ac$, the edges cease to be lone bridges and become joint bridges. We also note that the number of components does not change since $a$ and $c$ were already in the same component. Now, consider a configuration $A$ in $j_t$ which contains $e=ab,f=bc,ac$. The map $A\to A-ac$ counts a unique configuration from $b_t$. The image has weight $\lambda$ times the weight of $A$, and since $\lambda\geq 1$, we have $b_t\leq j_t$. 
\end{proof}

For the disjoint edges $e=ab$ and $f=cd$, we separate the proof into two parts, the connected part with $t=1$, and disconnected parts with $t\geq 2$. In the connected case, we have some small issues that do not arise otherwise. We first start with $t\geq 2$ which is again an injective argument similar to the one above. 

\begin{lemma}
\label{lem:disjoint-injections}
Let $e=ab$ and $f=cd$ be two disjoint edges in $K_n$. For every $t\geq 2$, we have $b_t\leq j_t+j_{t-1}$. 
\end{lemma}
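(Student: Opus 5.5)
Following the figure's moves (b)--(d), I will split the bridge--bridge configurations counted by $b_t$ according to how $e$ and $f$ sit, and give an explicit injection on each piece, either into $j_t$ or into $j_{t-1}$. Everything is weighted by $\lambda^{|A\setminus\{e,f\}|}$. Each move adds edges, so it multiplies the weight by $\lambda$ or $\lambda^2\ge 1$. Injectivity with weight nondecreasing then gives the coefficientwise bound $b_t\le j_t+j_{t-1}$, exactly as in Lemma~\ref{lem:adjacent-injection}.

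Take $A$ in $b_t$, so both $e=ab$ and $f=cd$ are bridges and $\kappa(A)=t$. There are two cases.

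\emph{Case 1: $e,f$ lie in the same component.} Deleting $e,f$ splits this component into three blocks $U,M,V$ as in Figure~\ref{fig:bridge-injections}(b), with $e$ joining $U$ to $M$ and $f$ joining $M$ to $V$. Label endpoints so that $a\in U$, $b,c\in M$, $d\in V$; the case $b=d$ is excluded because the edges are disjoint, though $b,c$ may coincide only if they are adjacent, which they are not.
\begin{itemize}
\item Move (b): add the edge $ad$, or more canonically the edge joining $U$ to $V$ via the endpoints $a,d$. The edge $ad$ is absent, since $a$ and $d$ are connected only through $e,f$. After adding it, $e$ and $f$ are no longer bridges, but deleting both separates $M$ from $U\cup V$. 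This is joint-bridge, the component count is still $t$, and the image lies in $j_t$.
\item Inverse: in the image, $ad$ is the unique edge between the two sides $\{U\cup V, M\}$ of the joint cut other than $e,f$. So it can be recognized provided we restrict to images in which $ad$ is present and removing it restores bridges.
\end{itemize}
The image of case 1 is contained in $j_t$ configurations where the two blocks $\{C(a),C(b)\}$ after deleting $e,f$ contain $a,d$ on one side and $b,c$ on the other, that is, the ``$ad\mid bc$'' type.

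\emph{Case 2: $e,f$ lie in different components $C_e\ni a,b$ and $C_f\ni c,d$.} This needs $t\ge 2$. Use move (d): add $ac$ and $bd$. These edges are absent because they cross components. The result has $t-1$ components. After deleting $e,f$, the components are $\{a\text{-side},c\text{-side}\}$ and $\{b,d\}$, and neither $e$ nor $f$ is individually a bridge. This is a $j_{t-1}$ configuration of type ``$ac\mid bd$'', and the two added edges are recovered as the unique edges other than $e,f$ crossing that cut between the $e$-part and the $f$-part. Move (c), through an outside component, is a backup in case (d) collides with something. It seems unnecessary here because cases 1 and 2 land in different types of $j$, or in different indices $t$ versus $t-1$.

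The main obstacle is the recovery step: showing the added edges are canonically identifiable in the image, so the maps are injective. This matters especially in case 1, where the image in $j_t$ may contain several edges between $U\cup V$ and $M$. My plan is to decode using the cut. In a $j_t$ configuration of type $ad\mid bc$, the joint cut separates $P\ni a,d$ from $Q\ni b,c$, and the only edges crossing it are $e$ and $f$. So $ad$ lies inside $P$, and the preimage is $A-ad$. This is valid exactly when $ad$ is a bridge of $P$ splitting $a$ from $d$. That property holds in the image, so I decode by deleting $ad$, and distinct $A$ give distinct images. In case 2 the image type is $ac\mid bd$ with $ac$ and $bd$ each bridges inside their sides, and deleting them is again unique. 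The two cases therefore inject into disjoint subsets of $j_t$ and $j_{t-1}$ respectively, which yields $b_t\le j_t+j_{t-1}$. The same computation with $t=1$ would fail only because case 2 is empty there, which is consistent with the paper treating $t=1$ separately.
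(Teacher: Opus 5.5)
Your proposal has a genuine gap in Case~1: the map there is not injective. The vertices $a,b,c,d$ are fixed, so ``label endpoints so that $a\in U$, $b,c\in M$'' is not a legitimate normalization for a single map defined on all of $b_t$. In absolute labels, Case~1 has four classes, indexed by the middle pair $\{b,c\},\{b,d\},\{a,c\},\{a,d\}$, and move (b) adds $ad,ac,bd,bc$ respectively. Middle pairs $\{b,d\}$ and $\{a,c\}$ both land in type $ac\mid bd$, and similarly $\{b,c\}$ and $\{a,d\}$ both land in type $ad\mid bc$. Your decoder cannot tell which side-edge to delete. Concretely, let the component of $B$ containing $e,f$ be the $4$-cycle with edges $ab,bd,dc,ca$. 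Then $B-ac$ (path $a\,b\,d\,c$, middle block $\{b,d\}$) and $B-bd$ (path $b\,a\,c\,d$, middle block $\{a,c\}$) both lie in $b_t$, and both are sent to $B$. The fibre has weight $2w(B)/\lambda$, which exceeds $w(B)$ for $1\le\lambda<2$. This is the same $2$-to-$1$ phenomenon that forces $z_{ac},z_{bd}\ge 2$ in Proposition~\ref{prop:weighted-cross-edge}. Since Case~1 never uses $t\ge2$, your argument would also give $b_1\le j_1$. But on $K_4$ with $\lambda=1$ one has $b_1=4$ and $j_1=2$. So move (c) is not a dispensable backup, and your closing remark misdiagnoses why $t=1$ is special.

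The fix, as in the paper, is to use move (b) only on the two classes where $a$ is an outer endpoint (middle pair $\{b,c\}$ or $\{b,d\}$). These land in different types, and the decoder deletes the added edge at $a$. For the two classes with $a\in M$, use $t\ge2$. Pick the outside component whose least vertex $x$ is smallest, and add $xu$ and $xv$, where $u\in U$ and $v\in V$ are the outer endpoints. The image lies in $j_{t-1}$, and it is decodable because $x$ is the unique common neighbour of $u$ and $v$ in $B-e-f$. You then need these images to be disjoint from your Case~2 images in $j_{t-1}$. Case~2 images always contain $bd$. In a move-(c) image of type $ac\mid bd$, the vertices $b\in U$ and $d\in V$ lie in different blocks of $A-e-f$, so $bd$ is absent. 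Your Case~2 is otherwise correct.
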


\begin{proof}
We divide the proof into two cases. Let $A$ be a configuration from $b_t$. Suppose first that the edges $e$ and $f$ are in the same component, say $C\subseteq A$. Since each one is a bridge, $A-e-f$ partitions $C$ into three parts $U,M,V$ with the middle component $M$ containing one of the endpoints of $e$ and $f$. Suppose $a\in U$. Then either $\{b,c\}$ or $\{b,d\}$ is in $M$. We add the edge that is joining the endpoints of $e$ and $f$ that are contained in $U$ and $V$, i.e. if $\{b,c\}\in M$, we add the edge $ad$. Note that this edge was absent, since the endpoints lie in distinct components. If $\kappa(A)=t$, then we observe that $\kappa(A-e-f+ad)=t-1$. Now, consider the map $A\to A+ad$. We observe that this is a map from $b_t$ to $j_t$ by the above considerations. We claim that the inverse of this map is also unique under the condition that $a\in U$. Consider $A+ad$. Then we know that $d\in V$ and $\{b,c\}\in M$. Taking out $ad,e,f$, we can recover the components $U,V,M$ and hence recover $A$ uniquely.  

Next, we look at the case when $\{a,c\}$ or $\{a,d\}$ is in $M$. We have to treat this case separately because performing the same injection as above would annihilate the inverse operation. $ad$ might be an edge in $M$; then we do not know if the graph came from a configuration where $a\in U,d\in V$ or from a configuration where $a,d\in M$. Here, we use the fact that $t\geq 2$. Since $e,f\in C$, we have at least one other component outside $C$. We choose the component $C'$ whose least labeled vertex $x$ is the smallest. Let $u,v$ be the vertices in $U$ and $V$ respectively. Then, we consider the edges $xu,xv$ and add them to the configuration. We had $\kappa(A)=t$, so we have $\kappa(A-e-f+xu+xv)=t-1$. Thus, $A \to A+\{xu,xv\}$ produces a map from $b_t$ to $j_{t-1}$.

This map is injective within each middle pair class. We look at the component not containing $a$; then the vertex $x$ is the unique common neighbour of $u$ and $v$. If we delete $e,f$, the image would recover $x$ and thus the components $U,M,V$. Thus, in the case where $e,f$ are in the same component, we have $b_t\leq j_t+j_{t-1}$ since we always add at least one more edge which multiplies the polynomial by $\lambda\geq 1$. 

Now, let us consider the case where $e,f$ belong to distinct components, say $C_1$ and $C_2$ in the configuration $A$. In this case, note that the edges $ac$ and $bd$ are absent. We consider $A+ac+bd$. Then, $\kappa(A+ac+bd)=t-1$ since the components $C_1$ and $C_2$ merge together. We also note that $e,f$ become joint bridges. Deleting $e$ leaves the path $a,c,d,b$ and deleting $f$ leaves the path $c,a,b,d$, yet deleting $e,f$ together separates the $\{a,c\}$ component from the $\{b,c\}$ component. For the inverse, if we delete the edges $ac,bd$ we recover the source. We also claim that the configurations which contain $ac,bd$ cannot occur from the operations we did above when $e,f$ were in the same component. This is due to the fact that both $ac$ and $bd$ are in the image, yet from before we omitted the pair coming from the parts $U,V$. 

Hence, the image sets are pairwise disjoint and the forward maps are all one-to-one. This proves that $b_t\leq j_t+j_{t-1}$. 
\end{proof}

We will prove the inequality for the case $t=1$ now. A straightforward injection argument is not working here; hence before moving forward with it, we prove a lemma comparing the weighted number of components where an edge is a bridge with the weighted number of components where it is not a bridge. 

\begin{lemma}
\label{lem:bridge-half}
Let $n\geq 3$, and consider $K_n$ with edge weights all equal to $\lambda\geq 1$. Among the connected spanning subgraphs, let $\beta_n$ denote the weighted sum over the configurations in which the edge $xy$ is present and is a bridge, 
\[\beta_n=\sum_{\substack{A \; \text{is connected}\\ A\ni e \; \text{is a bridge}}}\lambda^{|E(A)|}.\]Let $\gamma_n$ denote the weighted sum over the configurations in which either $xy$ is absent or is present but is not a bridge, 
\[\gamma_n=\sum_{\substack{A \; \text{is connected}\\ e\not \in A}}\lambda^{|E(A)|}+\sum_{\substack{A \; \text{is connected}\\ A\ni e \; \text{is not a bridge}}}\lambda^{|E(A)|}.\] Then, we have $\beta_n\leq \gamma_n$.  
\end{lemma}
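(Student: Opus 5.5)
The plan is to construct an explicit weight-nondecreasing injection from the bridge configurations counted by $\beta_n$ into the configurations counted by $\gamma_n$. The injection will use both kinds of target: configurations where $xy$ is present but is not a bridge, and configurations where $xy$ is absent. Because the target set is split into two disjoint classes, no factor-of-two ambiguity can arise in the inverse map.

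\textbf{Structure of a bridge configuration.} Let $A$ be connected with $xy\in A$ a bridge. Then $A-xy$ has exactly two components, a tree-like piece $S\ni x$ and $S'\ni y$. Since $n\geq3$, at least one of $x,y$ has an $A$-neighbour other than the other endpoint. Let $v$ be the smallest-labelled vertex of $\bigl(N_A(x)\cup N_A(y)\bigr)\setminus\{x,y\}$.

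The key structural fact is that in $A$, the vertices $x$ and $y$ have no common neighbour. Indeed, a common neighbour $w$ would give a path $x,w,y$ avoiding $xy$, contradicting that $xy$ is a bridge. Consequently $v$ is adjacent to exactly one of $x,y$.

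\textbf{Definition of the map.}
\begin{itemize}
\item \emph{Case $v\in N_A(y)$.} Map $A\mapsto A+xv$. The edge $xv$ is absent because $v\in S'$. The image is connected and contains the triangle $x,y,v$, so $xy$ is present but not a bridge. The weight is multiplied by $\lambda\geq1$.
\item \emph{Case $v\in N_A(x)$.} Map $A\mapsto A-xy+yv$. Since $v\in S$, adding $yv$ reconnects $S$ and $S'$. The image is connected, $xy$ is absent, and the weight is unchanged.
\end{itemize}
In both cases the image lies in a class counted by $\gamma_n$, and the two cases land in disjoint classes: $xy$ is present in the first and absent in the second.

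\textbf{Injectivity.} The main step is showing each case is invertible.
\begin{itemize}
\item \emph{First case.} In the image $A'=A+xv$, the common neighbours of $x$ and $y$ are exactly $\{v\}$. Any other common neighbour $w$ would already have $xw,yw\in A$, which is impossible by the structural fact above. So $v$ is determined by $A'$, and $A=A'-xv$ is recovered.
\item \emph{Second case.} In the image $A''=A-xy+yv$, a common neighbour $w\neq v$ of $x,y$ would need $xw\in A$ with $w\in S$ and $yw\in A$ with $w\in S'$, which is impossible. Hence $v$ is again the unique common neighbour, and $A=A''-yv+xy$.
\end{itemize}
So each restricted map is injective. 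Together with the weight comparison, this gives $\beta_n\leq\gamma_n$.

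\textbf{Main obstacle.} I expect the difficulty to be exactly this recoverability: a naive choice, such as always adding an edge and keeping $xy$, would create collisions between preimages where $v$ hangs off $x$ and where $v$ hangs off $y$. Routing the two orientations into the two different halves of $\gamma_n$ is what resolves this. The remaining check is that the minimality of $v$ is not actually needed for inversion, since uniqueness of the common neighbour suffices; minimality only makes the forward map well defined.
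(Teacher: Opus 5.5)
Your proof is correct. It follows the same overall plan as the paper: an injection from the bridge configurations into the two halves of $\gamma_n$, kept disjoint by whether $xy$ is present. The maps themselves are different.

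The paper splits according to whether $y$ is a leaf of $A$:
\begin{itemize}
\item If $y$ is not a leaf, it deletes $xy$ and joins $x$ to \emph{every} neighbour of $y$, landing in the $xy$-absent class. The inverse reads off the full common neighbourhood of $x$ and $y$, and the weight changes by $\lambda^{|\Gamma(y)|-1}$.
\item If $y$ is a leaf, it adds $zy$ for the globally least $z\neq x,y$, landing in the non-bridge class. The fixed choice of $z$ makes the inverse immediate.
\end{itemize}

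You instead split on which endpoint the least outside neighbour $v$ hangs from, and you use only single-edge moves. Both branches are inverted with one certificate: $v$ is the unique common neighbour of $x$ and $y$ in the image.

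What your version buys is that the branch into the $xy$-absent class is an exactly weight-preserving swap, so $\lambda\geq1$ is used only once. Your maps in fact prove $\beta_n\leq\gamma_n^{\mathrm{abs}}+\lambda^{-1}\gamma_n^{\mathrm{nb}}$ for every $\lambda>0$. Here $\gamma_n^{\mathrm{abs}}$ and $\gamma_n^{\mathrm{nb}}$ are the $xy$-absent and $xy$-present-non-bridge parts of $\gamma_n$. The paper's map, conversely, can gain several factors of $\lambda$ when $y$ has many neighbours; that is slack at large $\lambda$ which the paper does not exploit.

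A minor wording point: the components of $A-xy$ need not be trees, but your argument never uses that.
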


\begin{proof}
Let $A\in \beta_n$ be a configuration where $xy$ is present and is a bridge. Consider $A-xy$ and its connected components $x\in C_1$ and $y\in C_2$. First, we suppose $|C_2|\geq 2$. Consider $\Gamma(y)\subseteq C_2$, the neighbours of $y$. We construct the map \[A\to (A-xy)\cup \{xv: v\in \Gamma(y)\}.\]

The graph $(A-xy)\cup \{xv: v\in \Gamma(y)\}$ is connected and $xy$ is absent. Thus, we have arrived at a new configuration which belongs to $\gamma_n$. We deleted one edge and added $|\Gamma(y)|\geq 1$ edges; hence the weights increase by $\lambda^{|\Gamma(y)|-1}\geq 1$. The map is invertible on its image. Suppose we have a graph where $xy$ is not present and all of its neighbours send an edge to $x$. We can delete all these edges and add $xy$ to get the unique configuration in $\beta_n$. 

Now, suppose $\{y\}=C_2$. Let $z$ be the least vertex different from $x,y$, and consider the map $A\to A+zy$.  The edge $xy$ now lies on a cycle, and $A$ is recovered by deleting $yz$; hence this is an injection into the class in which $xy$ is present but not a bridge.  Its weight is $\lambda$ times the source weight.  The two image classes are disjoint because $xy$ is absent in the first and present in the second.  This proves the weighted inequality. 
\end{proof}

Now, consider $\mathcal{C}_n=\beta_n+\gamma_n$, which is the sum over all configurations which are connected: 
\[\mathcal{C}_n=\sum_{A\; \text{is connected}}\lambda^{|E(A)|}.\]
If we decompose at the bridge $xy$, we get the following
\[\beta_n=\lambda\sum_{r=1}^{n-1}\binom{n-2}{r-1}\mathcal{C}_r\mathcal{C}_{n-r}.\]
We will prove the following quantitative bounds on the configurations containing $xy$ as a bridge, which will give us the proportion among all the connected configurations. 

\begin{lemma}
\label{lem:quantitative-bridge}
For every $\lambda \geq 1$, and $n\geq 5$, we have
\[\beta_{n+1}\leq 3\lambda \mathcal{C}_n, \; \text{and} \quad 4\beta_n\leq \lambda \mathcal{C}_n.\]
\end{lemma}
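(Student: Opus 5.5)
The plan is to bound $\beta_n$ term by term in the decomposition
\[
\beta_n=\lambda\sum_{r=1}^{n-1}\binom{n-2}{r-1}\mathcal C_r\mathcal C_{n-r}.
\]
The two boundary terms $r=1$ and $r=n-1$ together give $2\lambda\,\mathcal C_{n-1}$, since $\mathcal C_1=1$. I expect them to dominate. Both inequalities then follow from a growth estimate of $\mathcal C_n$ relative to $\mathcal C_{n-1}$, together with exponentially small bounds on the middle terms.

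\emph{Step 1: growth lower bound.} A connected configuration on $[n-1]$, together with a nonempty set $N\subseteq[n-1]$ of neighbours of vertex $n$, gives a connected configuration on $[n]$. This map is injective, since deleting vertex $n$ recovers both pieces. Hence
\[
\mathcal C_n\ \ge\ \bigl((1+\lambda)^{n-1}-1\bigr)\mathcal C_{n-1}.
\]
Iterating, $\mathcal C_{m}\ge \mathcal C_r\prod_{k=r}^{m-1}\bigl((1+\lambda)^{k}-1\bigr)$ for $m\ge r$. We also use the trivial upper bound $\mathcal C_s\le(1+\lambda)^{\binom s2}$.

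\emph{Step 2: middle terms.} Write $s=n-r$ with $2\le r,s$. Combining the two bounds from Step 1 gives
\[
\frac{\mathcal C_r\mathcal C_s}{\mathcal C_{n-1}}\le \frac{(1+\lambda)^{\binom s2}}{\prod_{k=r}^{n-2}\bigl((1+\lambda)^k-1\bigr)}.
\]
The exponent in the denominator is $\sum_{k=r}^{r+s-2}k=\binom s2+(r-1)(s-1)$. Using $(1+\lambda)^k-1\ge \tfrac{2^k-1}{2^k}(1+\lambda)^k$ (valid since $1+\lambda\ge2$), the ratio is at most $c\,(1+\lambda)^{-(r-1)(s-1)}\le c\,2^{-(r-1)(s-1)}$, with an absolute constant $c=\prod_k(1-2^{-k})^{-1}<3.5$. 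Summing against $\binom{n-2}{r-1}$, the middle part of $\beta_n$ is at most
\[
\lambda\,\mathcal C_{n-1}\sum_{r=2}^{n-2}\binom{n-2}{r-1}c\,2^{-(r-1)(n-r-1)}.
\]
This sum is small for $n\ge5$: its largest terms are $r=2$ and $r=n-2$, each roughly $c(n-2)2^{-(n-3)}$. Consequently $\beta_n\le \lambda\,\mathcal C_{n-1}(2+\varepsilon_n)$ with $\varepsilon_n$ explicitly small.

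\emph{Step 3: conclude.} For the second inequality, Step 1 gives $\mathcal C_n\ge(2^{n-1}-1)\mathcal C_{n-1}\ge15\,\mathcal C_{n-1}$ when $n\ge5$. Hence $4\beta_n\le4(2+\varepsilon_n)\lambda\,\mathcal C_{n-1}\le\lambda\,\mathcal C_n$ as soon as $4(2+\varepsilon_n)\le15$. For the first inequality, apply Step 2 with $n+1$ in place of $n$: $\beta_{n+1}\le(2+\varepsilon_{n+1})\lambda\,\mathcal C_n\le3\lambda\,\mathcal C_n$ as soon as $\varepsilon_{n+1}\le1$.

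The main obstacle is making $\varepsilon_n$ small enough at the smallest admissible value $n=5$, where the crude constant $c$ and the factor $n-2$ are not yet negligible. For these few small cases I would instead compute $\mathcal C_2=\lambda$, $\mathcal C_3=3\lambda^2+\lambda^3$, and $\mathcal C_4$ exactly as polynomials in $\lambda$. I would then check the middle terms directly, using the sharper per-step factor $(1+\lambda)^{k}-1$ rather than the constant $c$. Once the inequality holds at $n=5,6$, the geometric decay in Step 2 handles all larger $n$ uniformly in $\lambda\ge1$.
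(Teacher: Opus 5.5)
Your argument is correct. Its skeleton matches the paper's: the bridge decomposition of $\beta_n$, the two boundary terms giving $2\lambda\mathcal{C}_{n-1}$, the growth bound $\mathcal{C}_n\geq((1+\lambda)^{n-1}-1)\mathcal{C}_{n-1}$, and an explicit check of the smallest cases. Where you differ is in how you control the middle terms.

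\textbf{The paper's route.} It bounds \emph{both} factors $\mathcal{C}_r$ and $\mathcal{C}_{n+1-r}$ by the total weights $\mu^{\binom{r}{2}}$ and $\mu^{\binom{n+1-r}{2}}$, with $\mu=1+\lambda$. It therefore needs a separate lower bound $\mathcal{C}_n\geq(1-U_n)\mu^{N}$, obtained by classifying disconnected configurations according to the component of vertex $1$. It then derives the second inequality from the first via $4\beta_n\leq 12\lambda\mathcal{C}_{n-1}$.

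\textbf{Your route.} You telescope the growth bound to cancel one factor exactly. This gives the per-term estimate $\mathcal{C}_r\mathcal{C}_s/\mathcal{C}_{n-1}\leq c\,\mu^{-(r-1)(s-1)}$, which is decreasing in $\lambda$. Both inequalities follow from it directly, with no estimate on disconnected configurations.

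The two routes leave the same residual cases to computation: the first inequality at $n=5,6$ and the second at $n=5$. Your plan, using exact $\mathcal{C}_2,\mathcal{C}_3,\mathcal{C}_4$ together with the factors $\mu^k-1$, does close them.

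\textbf{A bookkeeping point.} With $c<3.5$ as you state it, the tail estimate does not yet reach $n=7$ in the first inequality. It gives only $\varepsilon_8\leq 3.5\sum_{x=1}^{5}\binom{6}{x}2^{-x(6-x)}\approx 3.5\cdot 0.53\approx 1.86$, which exceeds $1$.

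The fix: every product in your Step~2 starts at $k=r\geq 2$, so you may take $c=\prod_{k\geq 2}(1-2^{-k})^{-1}<1.74$. Then $\varepsilon_8<0.93$ and $\varepsilon_7<1.75$, so only $n=5,6$ need direct verification, as you claim. For the second inequality it also helps to use the growth factor $2^{n-1}-1$ rather than $15$; with it, $n=6$ is already covered by the tail estimate.
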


\begin{proof}
Let $N=\binom{n}{2}$ be the number of edges in $K_n$ with labeled vertices, and set $\mu:=1+\lambda$. The sum of all configurations with weights $\lambda$ becomes $\mu^N$ which can be seen by expanding $(1+\lambda)^N$. We note that the second inequality follows from the first. Let $A$ be a connected configuration in $K_{n-1}$. Joining the new vertex to any non-empty subset of the vertices of $A$ produces a connected configuration in $K_n$. Different choices produce distinct configurations, so we have \[\mathcal{C}_n\geq (\mu^{n-1}-1)\mathcal{C}_{n-1}.\] 
Using the first inequality, we have for $n\geq 6$,
\[4\beta_n\leq 12\lambda \mathcal{C}_{n-1} \leq \dfrac{12\lambda}{(1+\lambda)^{n-1}-1}\mathcal{C}_n\leq \dfrac{12}{31}\lambda \mathcal{C}_n\leq \lambda \mathcal{C}_n.\]

Let us focus on the first inequality. We can classify the disconnected graphs according to the connected component vertex $1$ is in. On the one hand, we have $\mu^N-\mathcal{C}_n$ many disconnected components. On the other hand, we have \[\sum_{r=1}^{n-1}\binom{n-1}{r-1}\mathcal{C}_r\mu^{\binom{n-r}{2}},\]
many disconnected components. In the following estimates within this proof, $m$ is another notation for $n$, the number of vertices. Here, we count over $S$ where $1\in S$, and $|S|=r$. $\binom{m-1}{r-1}$ counts the number of vertices that can be in $S$, and $\mathcal{C}_r$ counts the number of connected components on $S$. For the rest, we can take any configuration. 

If we arrange the equality, and use the fact that $\mathcal{C}_r\leq \mu^{\binom{r}{2}}$, we get
\[1-U_n\leq \dfrac{\mathcal{C}_n}{\mu^N},\]
with $U_n=\sum_{r=1}^{n-1}\binom{n-1}{r-1}\mu^{-r(m-r)}$. Here, the terms $r=1$ and $n-1$ contribute at most $n2^{1-n}$, and the remaining terms contribute at most $2^{n-1}2^{-2(n-2)}=2^{3-n}$. We can write \[\beta_{n+1}=\lambda\sum_{r=1}^{n}\binom{n-1}{r-1}\mathcal{C}_r\mathcal{C}_{n+1-r}=2\lambda \mathcal{C}_n+\lambda\sum_{r=2}^{n-1}\binom{n-1}{r-1}\mathcal{C}_r\mathcal{C}_{n+1-r}=:2\lambda \mathcal{C}_n+\lambda R_n.\]
We set $x=r-1$ and bound $R_n$ as follows:
\[\dfrac{R_n}{\mu^N}\leq \sum_{x=1}^{n-2}\binom{n-1}{x}\mu^{-x(n-1-x)}\leq (n+3)2^{3-m}.\]
Together with the first inequality, we get 
$R_n\leq \mathcal{C}_n$ since \[(n+3)2^{3-m}+n2^{1-m}+2^{3-m}\leq 1,\]
for $n\geq 7$. 

We handle the boundary cases $n=5$ and $n=6$, and the boundary case $n=5$ for the second inequality by expanding the recursion $\mathcal{C}_n=\mu^{N}-\sum_{r=1}^{n-1}\binom{n-1}{r-1}\mathcal{C}_r\mu^{\binom{n-r}{2}}$. A direct expansion with $u=\lambda-1$ gives the following nonnegative expressions: 
\begin{align*}
3\lambda \mathcal{C}_5-\beta_6
&=\lambda^5(328+940u+872u^2+412u^3+110u^4+16u^5+u^6),\\
3\lambda \mathcal{C}_6-\beta_7
&=\lambda^6(16384+65240u+106570u^2+101940u^3+64660u^4\\
&\hspace{10mm}+28604u^5+8950u^6+1955u^7+285u^8+25u^9+u^{10})\\
\lambda \mathcal{C}_5-4\beta_5&=(1+u)^4(328+1692u+2106u^2+1362u^3+530u^4+126u^5+17u^6+u^7).
\end{align*}
\end{proof}

With these at hand, we are now ready to prove the $t=1$ case.
\begin{lemma}
\label{lem:connected-comparison}
Let $e=ab,f=cd \in E(K_n)$ be two disjoint edges and let $b_1,j_1$ be defined as before. For $\lambda\geq 1$ and $n\geq 7$, we have $b_1\leq j_1$. 
\end{lemma}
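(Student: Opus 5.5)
The plan is to stop looking for a single global injection and instead decompose both $b_1$ and $j_1$ according to the two vertex classes that appear once the marked edges are deleted. Then compare the two sums class by class, using Lemmas~\ref{lem:bridge-half} and \ref{lem:quantitative-bridge}.

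\textbf{Joint-bridge side.} Write $\mathcal{C}_m$ for the weighted number of connected spanning subgraphs of $K_m$, as in Lemma~\ref{lem:quantitative-bridge}. Take a configuration counted by $j_1$. Deleting $e,f$ leaves exactly two connected pieces $X,Y$, each containing one endpoint of $e$ and one endpoint of $f$. Hence
\[
j_1=\sum_{(X,Y)}\mathcal{C}_{|X|}\,\mathcal{C}_{|Y|},
\]
where the sum runs over the partitions of $V$ with $a\in X$ and $|X\cap\{c,d\}|=1$.

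\textbf{Bridge side.} Take a configuration counted by $b_1$. Deleting $e,f$ leaves three pieces $U,M,V$, where $M$ contains one endpoint of each edge. As in Lemma~\ref{lem:disjoint-injections}, set $X:=U\cup V$ and $Y:=M$. Let $g$ be the edge joining the endpoint of $e$ in $U$ to the endpoint of $f$ in $V$. The configurations on $X$ that are connected and contain $g$ as a bridge are counted by $\beta_{|X|}$, with the same edge-exponent convention as in Lemma~\ref{lem:bridge-half}. The only difference is the extra factor $\lambda$ contributed by $g$. Each $(X,Y)$ can play this role with either $X$ or $Y$ as the outer part. This gives
\[
b_1=\frac1\lambda\sum_{(X,Y)}\Bigl(\beta_{|X|}\,\mathcal{C}_{|Y|}+\mathcal{C}_{|X|}\,\beta_{|Y|}\Bigr).
\]
This formula also explains why the naive map $A\mapsto A+g$ is only two-to-one: it cannot tell which side was $M$.

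\textbf{Generic classes.} The goal is to prove, for each partition $(X,Y)$ (or, where necessary, summed over a family of partitions),
\[
\beta_{|X|}\,\mathcal{C}_{|Y|}+\mathcal{C}_{|X|}\,\beta_{|Y|}\le\lambda\,\mathcal{C}_{|X|}\,\mathcal{C}_{|Y|}.
\]
When both $|X|,|Y|\ge5$, the bound $4\beta_m\le\lambda\mathcal{C}_m$ from Lemma~\ref{lem:quantitative-bridge} makes the left side at most $\tfrac{\lambda}{2}\mathcal{C}_{|X|}\mathcal{C}_{|Y|}$, which is more than enough. When one side has size $3$ or $4$, the other side has size at least $3$ because $n\ge7$. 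In that range I would use $\beta_m\le\tfrac12\mathcal{C}_m$, which follows from Lemma~\ref{lem:bridge-half} since $\beta_m\le\gamma_m$. I would combine it with an explicit check of the constants for $m=3,4$: $\mathcal{C}_3=3\lambda^2+\lambda^3$ and $\beta_3=2\lambda^2$, so $\beta_3\le\tfrac{\lambda}{2}\mathcal{C}_3$, and similarly for $m=4$. Together these should give the per-class inequality.

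\textbf{Main obstacle.} The difficulty is the degenerate class where one side has size $2$, say $X=\{a,d\}$, so that $U,V$ are singletons and $e,f$ are pendant edges. Here $\beta_2=\mathcal{C}_2=\lambda$, and the class inequality reads $\mathcal{C}_{n-2}+\beta_{n-2}\le\lambda\,\mathcal{C}_{n-2}$, which fails at $\lambda=1$. My first attempt would be to charge this deficit to the classes with $|X|=3$, namely $X=\{a,d,w\}$ for $w\notin\{a,b,c,d\}$. There are $n-4$ such classes, and each has slack at least roughly $\tfrac{\lambda}{2}\mathcal{C}_3\mathcal{C}_{n-3}$. Since $\mathcal{C}_{n-2}\le\mathcal{C}_{n-3}\cdot(\text{explicit factor})$ by the recursion, this slack should cover the pendant case once $n\ge7$. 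Equivalently, there is a direct injection: move a canonically chosen vertex $w$ of $M$ across, as in the second case of Lemma~\ref{lem:bridge-half}.

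The bookkeeping in this charging step, including the symmetric class $X=\{a,c\}$ and its complement, is the part I expect to need the most care. The remaining small-$n$ borderline cases would be settled by an explicit polynomial expansion in $u=\lambda-1$, as at the end of Lemma~\ref{lem:quantitative-bridge}.
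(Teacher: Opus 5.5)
Your decomposition of $b_1$ and $j_1$ over crosswise partitions $(X,Y)$ is exactly the paper's, and so is the overall plan:
\begin{itemize}
\item classes with both sides of size at least $3$ follow from $\beta_m\le\gamma_m$ (Lemma~\ref{lem:bridge-half}) and $\lambda\ge1$ alone;
\item the size-$2$ classes are harmless for $\lambda\ge2$;
\item for $1\le\lambda\le2$ their deficit is charged to the size-$3$ classes;
\item $n=7$ is done by direct expansion.
\end{itemize}
The gap is in the charging step, which is the only non-routine part, and the mechanism you suggest for it cannot work. You want to absorb the deficit via a bound $\mathcal C_{n-2}\le\mathcal C_{n-3}\cdot(\text{explicit factor})$ from the recursion. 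Any such factor is exponential, since $\mathcal C_{m+1}\ge\bigl((1+\lambda)^m-1\bigr)\mathcal C_m$. The total slack of the $n-4$ classes $X=\{a,d,w\}$, by contrast, is only linear in $n$ times $\mathcal C_{n-3}$. Concretely, at $n=8$, $\lambda=1$ these four classes have total slack $4224$, whereas $\tfrac12\mathcal C_6=13352$; the true deficit is $\beta_6=1856$. So no estimate that measures the deficit on the scale of $\mathcal C_{n-2}$ (e.g.\ through $\beta_{n-2}\le\tfrac12\mathcal C_{n-2}$) can close the argument.

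What is needed is that the deficit lives on the scale of $\mathcal C_{n-3}$. Write $\psi_{r,s}:=\lambda^{-1}(\beta_r\mathcal C_s+\mathcal C_r\beta_s)-\mathcal C_r\mathcal C_s$ for your per-class defect and put $m=n-3$. The size-$2$ class gives $\psi_{2,m+1}=\beta_{m+1}-(\lambda-1)\mathcal C_{m+1}\le\beta_{m+1}$. The key fact is that bridges on $m+1$ vertices are dominated by pendant ones. Decomposing at the bridge, $\beta_{m+1}=2\lambda\mathcal C_m+(\text{non-pendant terms})$. The non-pendant terms are bounded by counting disconnected graphs, giving $\beta_{m+1}\le3\lambda\mathcal C_m$ for $m\ge5$. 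This is the first inequality of Lemma~\ref{lem:quantitative-bridge}, which you cite but never use.

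Together with $4\beta_m\le\lambda\mathcal C_m$ it yields $\psi_{3,m}\le-\lambda\bigl(\tfrac34\lambda(\lambda+3)-2\bigr)\mathcal C_m\le-\lambda\mathcal C_m$. Since each size-$2$ class is matched with $m-1$ size-$3$ classes,
\[
\tfrac12(b_1-j_1)\le2\psi_{2,m+1}+2(m-1)\psi_{3,m}\le2\lambda\,\mathcal C_m(4-m)\le0\qquad(n\ge8).
\]
Your alternative ``direct injection moving a vertex $w$ of $M$ across'' is not specified, and it would have to encode this same estimate.

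A smaller point: the slack ``roughly $\tfrac\lambda2\mathcal C_3\mathcal C_{n-3}$'' for $|X|=3$ does not follow from $4\beta_m\le\lambda\mathcal C_m$. That inequality fails at $m=3$, since $\beta_3=\tfrac12\mathcal C_3$ at $\lambda=1$. The guaranteed slack per class is $-\psi_{3,n-3}\ge\lambda\bigl(\tfrac34\lambda(\lambda+3)-2\bigr)\mathcal C_{n-3}$, i.e.\ $\tfrac14\mathcal C_3\mathcal C_{n-3}$ at $\lambda=1$. This still suffices once the deficit is bounded by $3\lambda\mathcal C_{n-3}$.
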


\begin{proof}
We delete $e,f$ from the configurations and compare the two sides. Let $A\in b_1$ be a connected subgraph with bridges $e,f\in A$. In $A-e-f$, we get three distinct components $U,M,V$ in a path, the middle $M$ containing one endpoint of each of $e$ and $f$, and two outer parts $U,V$ containing the other endpoints. Let $B\in j_1$ be a connected subgraph where $e,f\in B$ are joint bridges. In $B-e-f$, we have two components $C_1,C_2$ each containing two vertices from the endpoints of $e,f$. We either have $ac|bd$, which means $\{a,c\}\in C_1$ and $\{b,d\}\in C_2$, or $ad|bc$. We fix $ac|bd$ and the components $C_1,C_2$ as above with sizes $|C_1|=r$ and $|C_2|=s$.

We consider the subgraphs of $C_1$ where there is a 3-path decomposition $a\in U, \{b,d\}\in M$, and $c\in V$. Adding the edge $ac$ to such configurations takes us into the decomposition we have for $j_1$ with $ac|bd$. Since we add $ac$, we get an extra weight $\lambda$. This gives us the following bijection: 
\[\left\{\begin{array}{c}\text{spanning subgraphs on} \; U \; \text{with two}\\ \text{connected components separating}\; a,c \end{array} \right\} \xleftrightarrow[\text{delete} \; ac]{\text{add} \; ac} \left\{\begin{array}{c}\text{connected spanning subgraphs on} \; U \;\\ \text{in which}\; ac \; \text{is a bridge} \end{array} \right\}.\]

The two-component graphs on $C_1$ have total weight $\frac{\beta_r}{\lambda}$, and the component $C_2$ is free to be chosen independently under the condition that it is connected. Thus, the total weight of such configurations is $\frac{\beta_r \mathcal{C}_s}{\lambda}$. In a similar manner, considering the class with $bd$ instead of $ac$ gives rise to total weight $\frac{\beta_s\mathcal{C}_r}{\lambda}$ after adding the edge $bd$. 

When we consider the class $j_1$, we have the total weight $\mathcal{C}_r\mathcal{C}_s$; both sides are connected with sizes $r$ and $s$ respectively. Hence, for these fixed vertex sets, the polynomial identity corresponding to the difference $b_1-j_1$ can be written as
\[\psi_{r,s}:=\dfrac{\beta_r\mathcal{C}_s+\mathcal{C}_r\beta_s}{\lambda}-\mathcal{C}_r\mathcal{C}_s.\]

We sum over all configuration sizes. We need to choose $r-2$ vertices for the set containing $ac$, and similarly for the set containing $bd$; we get
\[b_1-j_1=2\sum_{r=2}^{n-2}\binom{n-4}{r-2}\psi_{r,n-r}.\]

For $r,s\geq 3$, Lemma \ref{lem:bridge-half} gives \[\dfrac{\psi_{r,s}}{\mathcal{C}_r\mathcal{C}_s}=\dfrac{1}{\lambda}\left(\dfrac{\beta_r}{\mathcal{C}_r}+\dfrac{\beta_s}{\mathcal{C}_s}\right)-1\leq 0.\]
For blocks of size $2$, we have $\beta_2=\mathcal{C}_2=\lambda$, and hence
\[\psi_{2,s}=\beta_s-(\lambda-1)\mathcal{C}_s.\]
When $\lambda\geq 2$, $\psi_{2,s}\leq 0$, since we have $\beta_s\leq \mathcal{C}_s$.

Now, we look at weights $1\leq \lambda \leq 2$. Let $m=n-3$, and consider the boundaries $(m-1)\psi_{2,n-2}$, and the terms with $r=3,s=m$. Every other term is already non-positive by Lemma \ref{lem:bridge-half}. We have $\beta_3=2\lambda^2$ and $\mathcal{C}_3=\lambda^2(\lambda+3)$; thus using Lemma \ref{lem:quantitative-bridge} gives
\begin{align*}
\psi_{3,m}&=2\lambda \mathcal{C}_m+\dfrac{\mathcal{C}_3}{\lambda}\beta_m-\mathcal{C}_3\mathcal{C}_m\leq 2\lambda \mathcal{C}_m+\dfrac{1}{4}\mathcal{C}_3\mathcal{C}_m-\mathcal{C}_3\mathcal{C}_m\\
&=-\lambda\left(\dfrac{3}{4}\lambda(\lambda+3)-2 \right)\mathcal{C}_m \leq -\lambda \mathcal{C}_m.
\end{align*}

For the $2$ boundaries, we have $\psi_{2,m+1}\leq \beta_{m+1}\leq 3\lambda \mathcal{C}_m$. Thus, we have for $n\geq 8$,
\begin{align*}
\dfrac{b_1-j_1}{2}&=\sum_{r=2}^{n-2}\binom{n-4}{r-2}\psi_{r,n-r}\leq 2\psi_{2,m+1}+2(m-1)\psi_{3,m}\\
&\leq 6\lambda \mathcal{C}_m+2(m-1)(-\lambda \mathcal{C}_m)=2\lambda \mathcal{C}_m(4-m)\leq 0. 
\end{align*}

For $n=7$, we directly calculate the values to get
\[b_1-j_1=-4\lambda^4(8+1358u+1822u^2+1179u^3+462u^4+113u^5+16u^6+u^7),\]
with $u=\lambda-1$ which is non-positive.  
\end{proof}

With lemmas \ref{lem:adjacent-injection}, \ref{lem:disjoint-injections}, and \ref{lem:connected-comparison}, we will prove coefficientwise positivity for the Rayleigh difference. 

\begin{thm}
\label{thm:coefficientwise}
Suppose we have equal edge weights $\lambda\geq 1$ on the complete graph $K_n$, and let $e,f$ be two distinct edges. Let $c_k$ be the coefficients we defined above. For every $k\geq 2$, $c_k\geq 0$. 
\end{thm}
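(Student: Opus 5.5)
We prove $c_k\ge 0$ by showing $\mathcal B_k\le \mathcal J_k$ and simply discarding the nonnegative term $\mathcal P_k$. Since $a_t\ge 0$, it suffices to compare $b_t$ with joint-bridge sums term by term inside the Cauchy products. Reindexing, $\mathcal B_k=\sum_{t\ge1} b_t a_{k-t}$ and $\mathcal J_k=\sum_{s+t\le k} j_s a_t=\sum_{s\ge1}j_s A_{k-s}$, where $A_m:=\sum_{t\le m}a_t$ is a partial sum of nonnegative terms. The key observation is that $\mathcal J_k$ carries \emph{partial sums} of the $a$'s (coming from the factor $1/(1-q)=\sum q^i$). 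This lets us absorb the $j_{t-1}$ term from Lemma~\ref{lem:disjoint-injections} without changing the $a$-index. The $t$ with $k-t<1$ contribute nothing, since $a_0=0$; likewise $b_t$ vanishes for $t$ beyond the relevant range.

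\textbf{Adjacent case.} If $e=ab$ and $f=bc$, Lemma~\ref{lem:adjacent-injection} gives $b_t\le j_t$ for all $t$. Hence
\[\mathcal B_k=\sum_t b_ta_{k-t}\le \sum_t j_t a_{k-t}\le \sum_t j_tA_{k-t}=\mathcal J_k.\]

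\textbf{Disjoint case.} For $t\ge 2$, Lemma~\ref{lem:disjoint-injections} gives $b_t\le j_t+j_{t-1}$. For $t=1$ with $n\ge 7$, Lemma~\ref{lem:connected-comparison} gives $b_1\le j_1$. Then
\[\mathcal B_k\le \sum_{t\ge1} j_t a_{k-t}+\sum_{t\ge2} j_{t-1}a_{k-t}=\sum_{s\ge1} j_s\bigl(a_{k-s}+a_{k-s-1}\bigr)\le \sum_{s} j_sA_{k-s}=\mathcal J_k,\]
using $a_{m}+a_{m-1}\le A_m$. This double counting is the heart of the argument: each $j_s$ is paired with at most two distinct $a$-indices $\le k-s$, and each such index appears only once in $A_{k-s}$. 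Consequently $c_k=\mathcal P_k+\mathcal J_k-\mathcal B_k\ge \mathcal P_k\ge0$.

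\textbf{Small $n$ (main obstacle).} The disjoint case with $4\le n\le 6$ is not covered by Lemma~\ref{lem:connected-comparison}, so $b_1\le j_1$ may fail there. The $t=1$ term contributes $b_1a_{k-1}$, and I would first try to absorb the possible excess $(b_1-j_1)^+a_{k-1}$ using the slack left over: for instance $\mathcal P_k\ge p^e_1p^f_{k-1}$, together with the unused entries $j_sa_t$ with $t\le k-s-2$ in $\mathcal J_k$. Failing a clean injection, these finitely many graphs admit a direct polynomial computation. One expands $c_k$ for $K_4,K_5,K_6$ in $u=\lambda-1$, in the same way the boundary cases of Lemma~\ref{lem:quantitative-bridge} were handled, and checks nonnegativity of the coefficients. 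For $n\le 3$ every pair of distinct edges is adjacent, so those cases are already covered. I expect this finite verification, rather than the summation argument above, to be where real work or case-checking is needed.
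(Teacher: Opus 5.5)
Your proposal is correct and is essentially the paper's proof: adjacent pairs via Lemma~\ref{lem:adjacent-injection}, disjoint pairs with $n\geq 7$ via Lemmas~\ref{lem:connected-comparison} and~\ref{lem:disjoint-injections}, and $K_4,K_5,K_6$ by a direct finite check; your partial-sum bound $a_{k-s}+a_{k-s-1}\leq A_{k-s}$ is exactly the paper's observation that $j_ta_{k-t}$ and $j_{t-1}a_{k-t}$ lie on the distinct diagonals $s+t=k$ and $s+t=k-1$ of $\mathcal J_k$. Your caution about small $n$ is warranted: in $K_4$ with $e=12$, $f=34$ one has $b_1=4\lambda>2\lambda^2=j_1$ for $1\leq\lambda<2$, so $\mathcal J_2-\mathcal B_2<0$ there, and $c_2=2\lambda^6+4\lambda^5$ is nonnegative only because $\mathcal P_2=(4\lambda+2\lambda^2)^2$ absorbs the deficit.
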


\begin{proof}
We recall that $c_k=\mathcal{P}_k+\mathcal{J}_k-\mathcal{B}_k$ with
\[\mathcal{P}_k=\sum_{s+t=k}p_s^ep_t^f, \quad \mathcal{J}_k=\sum_{s+t\leq k}j_sa_t, \quad \mathcal{B}_k=\sum_{s+t=k}b_ta_s.\]

For adjacent edges, Lemma \ref{lem:adjacent-injection} assigns the term $b_ta_{k-t}$ of $\mathcal{B}_k$ to the distinct term $j_ta_{k-t}$ of $\mathcal{J}_k$. Thus, we get the positivity. 

Suppose the edges are disjoint and $n\geq 7$. Lemma \ref{lem:connected-comparison} treats the case when $t=1$ by establishing the inequality $b_1\leq j_1$ for $n\geq 7$. For $t\geq 2$, Lemma \ref{lem:disjoint-injections} assigns $b_ta_{k-t}$ of $\mathcal{B}_k$ to $j_ta_{k-t}+j_{t-1}a_{k-t}$ of $\mathcal{J}_k$. 

We note that as $t$ varies, these occupy distinct indices: the first one occupies the diagonal $s+t=k$, and the second one occupies the diagonal $s+t=k-1$. Hence, $\mathcal{B}_k\leq \mathcal{J}_k$, which proves $c_k\geq 0$. 

All that remains are the cases $K_4,K_5,K_6$. This is a finite check which holds for all $\lambda\geq 1$. 
\end{proof}

\begin{proof}[Proof of Theorem \ref{thm:complete-graph}.]
For $0<q<1$, the series $\sum_{k\geq2}c_kq^k$ converges since its coefficients are eventually constant. Theorem~\ref{thm:coefficientwise} therefore gives
\[
\Rayleigh_{ef}(\RCZ_{K_n})=(1-q)^2\sum_{k\geq2}c_kq^k\geq 0.\qedhere
\]
\end{proof}

\begin{proof}[Proof of Corollary \ref{cor:connected-subgraphs}.]
We fix $\lambda\geq 1$ to be the common edge weight. Letting $q\to 0$ with fixed $\lambda$ conditions the random-cluster model on the minimum $\kappa$ parameter, which is one. Hence, the measure becomes concentrated on the connected spanning subgraphs of $K_n$ weighted with $\lambda^{|A|}$. Since we have the positivity result for $q>0$, the limit by continuity preserves the inequality. Thus, this result is equivalent to having $c_2\geq 0$ which we proved above. 
\end{proof}

We have proved our results under the condition that the weights are all equal and are greater than $1$. The most natural extension is to ask what happens when the weights are arbitrary and possibly less than $1$. We prove a special case of this: if the edges among the endpoints of $e,f$ have weight $\geq 2$, then we are able to achieve more freedom compared to the previous settings.

We reuse $a_t,p_t^e,p_t^f,b_t,j_t$ and $c_k$ for nonuniform edge weights: in each configuration sum, replace $\lambda^{|A\setminus\{e,f\}|}$ by $\prod_{g\in A\setminus\{e,f\}}z_g$, and define $c_k$ by the same coefficient formulas as before.

\begin{prop}
\label{prop:weighted-cross-edge}
Let $e=ab,f=cd\in K_n$ be distinct edges, and $0<q<1$. Suppose $z_{ac},z_{ad},z_{bc},z_{bd}\geq 2$. Then, $b_1\leq j_1$ with all other edge weights arbitrary and nonnegative. If, in addition, every edge $g$ with $|g\cap \{a,b,c,d\}|=1$ has weight $\geq 1$, $c_k\geq 0$ for all $k$, and thus $e$ and $f$ are negatively correlated. For adjacent edges $e=ab,f=cd$, if $z_{ac}\geq 1$, then $e,f$ are negatively correlated.
\end{prop}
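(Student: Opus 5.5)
Write $\omega(A):=\prod_{g\in A\setminus\{e,f\}}z_g$. The plan is to redo the injections of Lemmas~\ref{lem:adjacent-injection}, \ref{lem:disjoint-injections} and \ref{lem:connected-comparison}, keeping track of exactly which edges each map adds, so that only the weights of those edges enter. Since every map multiplies a source weight by the weight of the added edges, it suffices that the added edges have weight at least $1$.

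\textbf{Adjacent case.} Here $e=ab$, $f=bc$. The map $A\mapsto A+ac$ of Lemma~\ref{lem:adjacent-injection} multiplies the weight by $z_{ac}\geq 1$, so $b_t\leq j_t$ for all $t$. The coefficient assignment in Theorem~\ref{thm:coefficientwise} then goes through verbatim, giving $c_k\geq0$ and hence negative correlation.

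\textbf{Disjoint case, $t\geq2$.} The three moves of Lemma~\ref{lem:disjoint-injections} add, respectively:
\begin{itemize}
\item one cross edge among $ac,ad,bc,bd$ (the outer-block join);
\item the pair $ac,bd$ (the crosswise join);
\item two edges $xu,xv$, where $x$ lies in an outside component and $u,v$ are the endpoints of $e,f$ in $U,V$.
\end{itemize}
The first two multiply by cross-edge weights, which are $\geq2\geq1$. The third multiplies by $z_{xu}z_{xv}$, and each of these edges meets $\{a,b,c,d\}$ in exactly one vertex, so it has weight $\geq1$ under the extra hypothesis. So $b_t\leq j_t+j_{t-1}$ holds with weights, and together with $b_1\leq j_1$ the diagonal argument of Theorem~\ref{thm:coefficientwise} gives $c_k\geq0$. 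Since $c_k$ is defined by the same Cauchy-product formulas, the identity from Proposition~\ref{prop:bridge-identity} yields $\Rayleigh_{ef}\geq0$ for $0<q<1$.

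\textbf{The main obstacle: $b_1\leq j_1$ with arbitrary other weights.} The proof of Lemma~\ref{lem:connected-comparison} used symmetric counts ($\beta_r$, $\mathcal C_r$, and the quantitative bounds of Lemma~\ref{lem:quantitative-bridge}), and these are no longer available. I would instead use a direct two-to-one comparison.

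\begin{enumerate}
\item Fix the vertex partition $C_1\sqcup C_2$ of type $ac\,|\,bd$ and a pair $(H_1,H_2)$ with $H_i$ a spanning subgraph of $C_i$.
\item The $b_1$-configurations on this partition with no crossing edge except $e,f$ split into two kinds. In the first, $H_1$ has exactly two components separating $a$ from $c$ and $H_2$ is connected; this is the $U,V$ splitting with $M\supseteq\{b,d\}$. The second kind has the roles of $H_1$ and $H_2$ swapped.
\item Let $\mathcal D_i$ be the weight of two-component spanning subgraphs of $C_i$ separating the marked pair, and $\mathcal C_i$ the connected weight. Then
\[
b_1-j_1=\sum\bigl(\mathcal D_1\mathcal C_2+\mathcal C_1\mathcal D_2-\mathcal C_1\mathcal C_2\bigr),
\]
summed over partitions and both types $ac\,|\,bd$ and $ad\,|\,bc$.
\item Adding $ac$ maps the $\mathcal D_1$-configurations injectively into the connected configurations on $C_1$ that contain $ac$ as a bridge. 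Hence $z_{ac}\mathcal D_1\leq \mathcal C_1^{\mathrm{br}}$, where $\mathcal C_1^{\mathrm{br}}$ is the weight of connected configurations with $ac$ a bridge.
\item Because $ac$ is itself an edge with weight $\geq 1$, the generalization of Lemma~\ref{lem:bridge-half} should give $\mathcal C_1^{\mathrm{br}}\leq \tfrac12\mathcal C_1$. The cleanest route is the pairing $A\mapsto A\triangle\{ac\}$ between connected configurations containing $ac$ as a bridge and connected configurations where $ac$ is absent. This pairing needs no other weights.
\end{enumerate}

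Steps 4 and 5 give $\mathcal D_1\leq \mathcal C_1/(2z_{ac})\leq \mathcal C_1/4$, and likewise $\mathcal D_2\leq \mathcal C_2/(2z_{bd})\leq\mathcal C_2/4$. Each bracket is then at most $-\tfrac12\mathcal C_1\mathcal C_2\leq0$. This is precisely where the weight $2$ is used, and no restriction on $n$ or on the other weights is needed.

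The part I expect to need the most care is the sign bookkeeping in step 5. The pairing in step 5 is only an inequality, which is why $2$ appears rather than $1$. I also need to handle the degenerate blocks with $|C_i|=2$, where $\mathcal C_i=z_{ac}$ and $\mathcal D_i=1$. There the bound becomes $\mathcal D_i\le \mathcal C_i/2$, which still suffices.
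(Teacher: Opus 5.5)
Your proposal follows the paper's approach and reaches the right conclusion, but Step~5 is false. The fix is to drop it.

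\textbf{What matches the paper.} The following parts of your proposal are correct and are the paper's argument:
\begin{itemize}
\item the adjacent case via $A\mapsto A+ac$;
\item the observation that the $t\geq2$ moves of Lemma~\ref{lem:disjoint-injections} add only cross edges, or edges $xu,xv$ meeting $\{a,b,c,d\}$ in one vertex;
\item the decomposition of $b_1-j_1$ into brackets $\mathcal D_1\mathcal C_2+\mathcal C_1\mathcal D_2-\mathcal C_1\mathcal C_2$.
\end{itemize}

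\textbf{Why Step~5 fails.} The map $A\mapsto A\triangle\{ac\}$ sends a connected configuration in which $ac$ is a bridge to a \emph{disconnected} one. So it does not pair bridge configurations with connected $ac$-free ones. Deleting $ac$ actually pairs the $ac$-free connected configurations with those containing $ac$ as a non-bridge.

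The inequality $\mathcal C_1^{\mathrm{br}}\le\tfrac12\mathcal C_1$ also fails once the weights are not uniform. Take $C_1=\{a,c,w\}$ with $z_{ac}=s$, $z_{aw}=x$, $z_{cw}=y$. Then $\mathcal C_1^{\mathrm{br}}=s(x+y)$ and $\mathcal C_1=s(x+y)+xy+sxy$. Already $s=2$, $x=y=1$ gives $\mathcal C_1^{\mathrm{br}}=4>\tfrac72=\tfrac12\mathcal C_1$. Consequently $\mathcal D_1=2>\tfrac74=\mathcal C_1/(2z_{ac})$, so your intermediate bound is false too. Lemma~\ref{lem:bridge-half} does not transfer because its maps add edges ($xv$ for $v\in\Gamma(y)$, or $zy$) whose weights are now uncontrolled.

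\textbf{The fix.} Step~4 alone gives $z_{ac}\mathcal D_1=\mathcal C_1^{\mathrm{br}}\le\mathcal C_1$, hence $\mathcal D_1\le\mathcal C_1/z_{ac}\le\tfrac12\mathcal C_1$. Similarly $\mathcal D_2\le\mathcal C_2/z_{bd}\le\tfrac12\mathcal C_2$. Each bracket is then at most $\mathcal C_1\mathcal C_2\bigl(1/z_{ac}+1/z_{bd}-1\bigr)\le 0$. This is the same bound you already invoked for two-vertex blocks.

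This is exactly the paper's argument, written per partition. The paper maps $A\mapsto A+ac$ (the edge joining the two outer endpoints) from $b_1$ into $j_1$. It notes that each $B\in j_1$ has at most the two preimages $B-ac$ and $B-bd$, and sums to get $w(B)(1/z_{ac}+1/z_{bd})\le w(B)$.

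This also corrects your explanation of where the hypothesis enters. The weight $2$ is what lets the trivial bound $\mathcal C_i^{\mathrm{br}}\le\mathcal C_i$ close the argument. Indeed, $1/z_{ac}+1/z_{bd}\le1$ is exactly what is needed when both blocks have two vertices, as in $K_4$. It does not come from a lossy pairing in Step~5.
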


\begin{proof}
We set $z_e=z_f=1$, since the Rayleigh difference is independent of these two weights. Let $A\in b_1$, and consider the graph $A-e-f$, which has $3$ components, $U,M,V$. Let $M$ be the middle component which contains the two vertices from the endpoints of $e$ and $f$. Let $a\in U$ and $c\in V$ without loss of generality. Consider the map \[\psi: A \to A+ac.\]
We have $\psi(A)=A+ac \in j_1$, and the joint bridges $e,f$, when taken out, leave two components $C_1,C_2$ with $ac$ on one part and $bd$ on the other part. We also observe that the weight of the configuration $A+ac$ is the same as the weight of the configuration $A$ multiplied by $z_{ac}\geq 2$. 

Now, let us look at the preimage $\psi^{-1}(B)$ of a configuration $B\in j_1$, which will be a set in $b_1$. We claim that the size of the preimage will be at most $2$. Consider $B-e-f$. Suppose without loss of generality that we have $a,c\in C_1$ and $b,c\in C_2$. Then, the only way $B$ is the image of a configuration $A$ under $\psi$ is to have $ac\in C_1$ as a bridge or to have $bd\in C_2$ as a bridge. Then, we would have $\psi(B-ac)=B$, or $\psi(B-bd)=B$. Thus, there are at most $2$ such configurations, and the weights of the configurations are the weight of $B$ multiplied by $\frac{1}{z_{ac}}$ or $\frac{1}{z_{bd}}$ respectively. Letting $w(A)$ denote the weight of $A$, we have
\[\sum_{A\in \psi^{-1}(B)}w(A)\leq w(B)\left(\dfrac{1}{z_{ac}}+\dfrac{1}{z_{bd}} \right)\leq w(B).\]

If we sum over $B$, we get $b_1\leq j_1$ since the left side will count $b_1$ whereas the right will be $j_1$. 

The injections in Lemma~\ref{lem:disjoint-injections} only add edges incident with one of $a,b,c,d$ except $e,f$. Under the additional lower bound $1$, the weight of an image is at least the weight of its source, so the proof of Theorem~\ref{thm:coefficientwise} applies coefficient by coefficient.  In the adjacent case, Lemma~\ref{lem:adjacent-injection} adds only $ac$, giving the last assertion.
\end{proof}

\section{Connectivity Correlation and Gibbs Measures}
\label{sec:connectivity-programme}
Let $G=(V,E)$ be a graph, and let $e=ab,f=cd\in E$ be fixed edges. We set $H=G-e-f$. We recall the event $N_e$. Consider a configuration $A\subseteq E(H)$, and let $C_A(x)$ be the component of the vertex $x$ in $A$.  We defined the event $N_e$ as $N_e=\{C_A(a)=C_A(b)\}$ in Section \ref{sec: preliminarylemmas}. We can write this event as the connectivity event $\{a\leftrightarrow b\}$, meaning that the vertices $a,b$ are connected. In this section, we will examine the lower inequalities \begin{align*}
0&\leq \Cov_H^{\mathcal{F}}(\mathbf{1}_{N_e},\mathbf{1}_{N_f}).
\end{align*}

We also defined \[\Gamma_H(a,b;c,d):=Z_H(N_e\cap N_f)Z_H-Z_H(N_e)Z_H(N_f).\]
With $\Gamma_H(a,b;c,d)$, one can write the covariance as \[\Cov_H^{\mathcal{F}}(\mathbf{1}_{N_e},\mathbf{1}_{N_f})=\dfrac{\Gamma_H^{\mathcal{F}}(a,b;c,d)}{(\ForestZ_H)^2}.\]
 
We prove that considering the two-forests as we have done in Section \ref{sec: arboreal-gas}, we have the non-negativity for the covariance of bridge events. We let $\tau^{\times}_2(G;e,f)$ denote the number of spanning two-forests of $G$ which separate both endpoints $a,b$ and $c,d$. 
\begin{prop}
\label{prop:large-lambda-connectivity}
Let $G=(V,E)$ be a graph, $e,f\in E$ be distinct edges, and let $H=G-e-f$. Assume $H$ is connected. Let $\ForestZ_H(\lambda)$ be the arboreal gas measure with equal fugacity $\lambda$, and denote by $\Cov_H^{\mathcal{F}}$ the covariance under this measure. We have
\[\Cov_H^{\mathcal{F}}(\mathbf{1}_{N_e},\mathbf{1}_{N_f})=\dfrac{\tau_2^{\times}(H;e,f)}{\tau(H)}\lambda^{-1}+O(\lambda^{-2}).\]
Moreover, we either have $\tau_2^{\times}(H;e,f)>0$, in which case the covariance is positive for all sufficiently large $\lambda$, or $N_e$ and $N_f$ depend on different cycles, and their covariance vanishes identically for all $\lambda$. 
\end{prop}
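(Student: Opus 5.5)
Work with the complementary events $D_e=N_e^c$ and $D_f=N_f^c$; the covariance is unchanged by complementing both. Then
\[
\Gamma_H^{\mathcal F}(a,b;c,d)=Z_H(D_e\cap D_f)\,Z_H-Z_H(D_e)\,Z_H(D_f).
\]
Expand every factor in powers of $\lambda$ at large fugacity. Set $n=|V(H)|$, so that $Z_H=\tau(H)\lambda^{n-1}+\tau_2(H)\lambda^{n-2}+\dots$. A spanning tree of the connected graph $H$ connects every pair of vertices, so each forest in $D_e$ has at least two components. Hence
\[
Z_H(D_e)=\tau_2^{(e)}\lambda^{n-2}+O(\lambda^{n-3}),
\]
where $\tau_2^{(e)}$ counts two-forests separating $a$ from $b$, and similarly for $D_f$. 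For the joint event, $Z_H(D_e\cap D_f)=\tau_2^\times(H;e,f)\lambda^{n-2}+O(\lambda^{n-3})$.

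Multiplying out gives
\[
Z_H(D_e\cap D_f)Z_H=\tau\,\tau_2^\times\lambda^{2n-3}+O(\lambda^{2n-4}),
\qquad
Z_H(D_e)Z_H(D_f)=O(\lambda^{2n-4}).
\]
Dividing by $Z_H^2=\tau^2\lambda^{2n-2}(1+O(\lambda^{-1}))$ yields $\Cov=\frac{\tau_2^\times}{\tau}\lambda^{-1}+O(\lambda^{-2})$. This first claim is routine bookkeeping. The only subtle point is that the product of the two single-event sums begins two orders lower than the joint term, because each factor separately loses a power of $\lambda$.

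For the dichotomy, suppose $\tau_2^\times(H;e,f)>0$. Then the leading coefficient is strictly positive and dominates, so the covariance is positive for $\lambda$ large. The main work is the case $\tau_2^\times=0$.

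Here the plan is to show that no spanning two-forest separates both pairs. By the characterization suggested in Figure~\ref{fig:cycle-two-forest}, this should force $H$ to decompose so that $a,b$ lie in one block structure and $c,d$ in another, sharing at most a cut vertex. Concretely, I would prove the following claim: if $\tau_2^\times=0$, then there is a cut vertex $w$ of $H$, or $H$ is such that $a,b$ and $c,d$ are in different blocks of the block-cut tree, and the connection events are determined by edge sets $M_e$ and $M_f$ lying in edge-disjoint unions of blocks.

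To prove the claim, take the 2-connected block-cut structure of $H$. A pair $a,b$ can be separated by a two-forest exactly when some path between them exists. Removing one edge from a spanning tree separates $a$ from $b$ exactly when that edge lies on the tree path from $a$ to $b$. So $\tau_2^\times=0$ means that in every spanning tree the $a$–$b$ and $c$–$d$ paths share no edge. If some block contained edges of both possible paths, I would try to exchange edges inside that 2-connected block to build a tree where the two paths overlap. I expect this exchange argument to be the main obstacle, and I would attempt it via Menger-type reasoning inside the block: the two paths each traverse that block between attachment points, and two-connectivity allows rerouting them along a common edge.

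Once the decomposition is established, the forest measure factorizes over blocks. A forest of $H$ is exactly a choice of forest in each block, because cycles live in blocks. Therefore $\1_{N_e}$ depends only on the edges in $M_e$ and $\1_{N_f}$ only on those in $M_f$. These are independent under the product structure, so the covariance vanishes for every $\lambda$.
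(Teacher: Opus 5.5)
Your large-$\lambda$ expansion is the same bookkeeping as in the paper and is correct.

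The gap is in the dichotomy. The whole case $\tau_2^\times(H;e,f)=0$ rests on a step you leave open as the ``main obstacle'': if the block-cut-tree routes from $a$ to $b$ and from $c$ to $d$ in $H$ pass through a common block, then some spanning two-forest separates both pairs. Without this you have no decomposition to factorize over, so the vanishing of the covariance is not established.

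Your claim is also mis-stated. The mere existence of a cut vertex of $H$ gives nothing. The phrase ``$a,b$ and $c,d$ are in different blocks'' is not meaningful when $a$ and $b$ share no block. What you need is that the two routes use disjoint sets of blocks; they may share a single cut vertex. Given that, your factorization works, once you add one observation: every simple $a$--$b$ path in $H$ uses only edges of blocks on the $a$--$b$ route. Hence $\mathbf{1}_{N_e}$ is a function of the forest restricted to those blocks, and likewise for $N_f$.

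The paper supplies the missing step constructively, working with cycles of $G$ rather than blocks of $H$. If a cycle of $G$ contains $e$ and $f$, deleting them leaves two vertex-disjoint paths $P_1,P_2$ in $H$, one possibly a single vertex, that pair the endpoints crosswise. Extend $P_1\cup P_2$ to a spanning tree $T$ of $H$, and delete an edge of $T$ on the path joining $P_1$ to $P_2$. The result is a spanning two-forest separating $a$ from $b$ and $c$ from $d$.

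In your language, suppose both routes cross a block $B$, with attachment pairs $\{x,y\}$ and $\{x',y'\}$. Menger's theorem in $B$ gives two disjoint $\{x,y\}$--$\{x',y'\}$ paths (trivial paths allowed; if $B$ has only two vertices the pairs coincide). The same extend-and-cut construction inside $B$, completed by spanning trees of all other blocks, shows $\tau_2^\times>0$.

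For the other direction, the paper uses that without a common cycle, $e$ and $f$ lie in different components of the graphic matroid of $G$. This is your block factorization applied to $G$ instead of $H$, and the two formulations are equivalent.
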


\begin{proof}
We assume $H=G-e-f$ is connected. First, note that complementing the events preserves the covariance; hence we will work with events $D_e:=\{a \not \leftrightarrow b\}$ and $D_f:=\{c \not \leftrightarrow d\}$. We have \[\Cov_H^{\mathcal{F}}(\mathbf{1}_{N_e},\mathbf{1}_{N_f})=\Prob_H^{\mathcal{F}}(D_e\cap D_f)-\Prob_H^{\mathcal{F}}(D_e)\Prob_H^{\mathcal{F}}(D_f).\]
On $D_e\cap D_f$ when both endpoints are disconnected, a spanning forest can have at least $2$ components, and thus at most $|V(H)|-2$ edges. The $2$-component case is exactly counted by $\tau_2^{\times}(H;e,f)$. Thus, we have
\[\ForestZ_H(D_e\cap D_f)=\tau_2^{\times}(H;e,f)\lambda^{|V(H)|-2}+O(\lambda^{|V(H)|-3}).\]

We also have $\ForestZ_H(\lambda)=\tau(H)\lambda^{|V(H)|-1}+O(\lambda^{|V(H)|-2}),$ which gives us the probability 
\[\Prob_H^{\mathcal{F}}(D_e\cap D_f)=\dfrac{\tau_2^\times(H;e,f)}{\tau(H)}\lambda^{-1}+O(\lambda^{-2}).\]

On $D_e$ and $D_f$ separately, the forests can have at most two connected components as well. The same analysis then gives 
$\Prob_{H}^{\mathcal{F}}(D_e)=O(\lambda^{-1})$, and $\Prob_H^{\mathcal{F}}(D_f)=O(\lambda^{-1})$.

Next, we show that whenever $\tau_2^\times(H;e,f)>0$, the edges $e$ and $f$ lie on the same cycle in $G$. Suppose a spanning two-forest of $H$ separates both
endpoint pairs of $e$ and $f$. Adding $e$ joins its two components and produces a spanning tree. Adding $f$ then creates a unique cycle, which also contains $e$. Thus $e$ and $f$ lie on
a common cycle of $G$.

Conversely, suppose a cycle of $G$ contains both $e$ and $f$.
Deleting these edges from the cycle leaves two vertex-disjoint
paths in $H$, possibly with one path consisting of a single
vertex. The paths separate both endpoint pairs.
Since $H$ is connected, their union extends to a spanning
tree $T$ of $H$. Contract the two paths in $T$ and delete
an edge on the path between the resulting two vertices.
In the original tree, this deletion produces a spanning
two-forest containing both original paths and separating
both marked pairs. Hence $\tau_2^\times(H;e,f)>0$.

Finally, suppose no cycle of $G$ contains both $e$ and $f$.
Then $e$ and $f$ belong to different components of the
graphic matroid of $G$, say $M_e$ and $M_f$.
The forest condition and the weights $\lambda^{|A|}$
factor over the matroid components. Consequently, the
restrictions of a random forest of $H$ to these components
are independent. Furthermore, $N_e$ depends only on the edges in $M_e$: any $a$--$b$ path in $H$, together with $e$, forms a cycle
of $G$, so every edge of that path belongs to $M_e$.
Likewise, $N_f$ depends only on the edges in $M_f$.
The two events are therefore independent, and their
covariance vanishes for every $\lambda>0$. Figure \ref{fig:cycle-two-forest} shows the distinct cases we analyzed above.
\end{proof}

\begin{figure}[htbp]
\centering
\resizebox{.98\textwidth}{!}{\CycleTwoForestFigure}
\caption{The large-fugacity dichotomy.  Removing $e$ and $f$ from a common cycle leaves two crosswise paths in $H$, which extend to a spanning two-forest counted by $\tau_2^\times$.  If no common cycle exists, $e$ and $f$ lie in different graphic-matroid components, and the connectivity events $N_e,N_f$ are independent factors of the arboreal-gas measure.}
\label{fig:cycle-two-forest}
\end{figure}

Proposition \ref{prop:large-lambda-connectivity} reveals the positive correlation of connectivity events $\{a\leftrightarrow b\}$ and $\{c\leftrightarrow d\}$ for large $\lambda$ fugacity. We conjecture that this holds for all $\lambda$ and also for the random-cluster measure for any $q>0$. 

\begin{conj}
\label{conj: forest-connectivity}
Let $G=(V,E)$ be a finite graph, and let $a,b,c,d\in V$ be distinct vertices. Under the arboreal gas measure with positive fugacity at each edge, we have
\[\Gamma_G^{\mathcal{F}}(a,b;c,d) \geq 0.\]
In other words, 
\[\Prob_G^{\mathcal{F}}(a\leftrightarrow b, c \leftrightarrow d)\geq \Prob_G^{\mathcal{F}}(a\leftrightarrow b)\Prob_G^{\mathcal{F}}(c \leftrightarrow d).\]
\end{conj}

\begin{conj}
\label{conj: rc-connectivity}
Let $G=(V,E)$ be a finite graph, and let $a,b,c,d\in V$ be distinct vertices. For every $0<q<1$, under the random-cluster measure, we have
\[\Gamma_G^{\mathrm{RC}}(a,b;c,d) \geq 0.\]
In other words, 
\[\Prob_G^{\mathrm{RC}}(a\leftrightarrow b, c \leftrightarrow d)\geq \Prob_G^{\mathrm{RC}}(a\leftrightarrow b)\Prob_G^{\mathrm{RC}}(c \leftrightarrow d).\]
\end{conj}

We note that Conjecture \ref{conj: rc-connectivity} is true for $q\geq 1$. One can prove this assertion by noting that the connectivity events are non-decreasing events under addition of edges, and using the FKG inequality. Hence, if Conjecture \ref{conj: rc-connectivity} is true, then the connectivity events are positively correlated for all $q>0$. 

For the arboreal gas measure, we can strengthen the conjecture at different levels: 
\begin{enumerate}
    \item Every monomial coefficient of the multivariate polynomial $\Gamma_G^{\mathcal{F}}(a,b;c,d;\mathbf{z})$ is nonnegative. 
    \item After setting all $z_g=\lambda$, every coefficient of $\Gamma_G^{\mathcal{F}}(a,b;c,d)$ is nonnegative. 
    \item $\Gamma_G^{\mathcal{F}}(a,b;c,d;\lambda)\geq 0$ for every $\lambda>0$.
\end{enumerate}

We note that each item implies the next. Numerically, there is strong evidence that $1.$ actually holds. Let us look at the smallest possible example. 

\begin{example}
Consider $K_4$ and the edges $e=12, f=34$. Let $H=K_4-e-f$. If we expand directly, we have 
\begin{align*}
\Rayleigh_{12,34}(\ForestZ_{K_4})=&z_{13}z_{24}(1+z_{13})(1+z_{24})
   +z_{14}z_{23}(1+z_{14})(1+z_{23})-2z_{13}z_{14}z_{23}z_{24} \\
=&(z_{13}z_{24}-z_{14}z_{23})^2+z_{13}z_{24}(1+z_{13}+z_{24})+z_{14}z_{23}(1+z_{14}+z_{23}).
\end{align*}
We can see that we have pointwise non-negativity after writing it in square form; however, coefficientwise, in the Rayleigh difference we have $-2$ as the coefficient of $z_{13}z_{14}z_{23}z_{24}$. Hence, we do not have coefficientwise positivity in the Rayleigh difference. However, if we consider the connectivity correlation, we have 
\begin{align*}
\Gamma_H^{\mathcal{F}}(1,2;3,4)
=&z_{14}z_{23}(z_{13}+z_{24})(1+z_{14})(1+z_{23})\\
&+z_{13}z_{24}(z_{14}+z_{23})(1+z_{13})(1+z_{24})\\
&+z_{13}z_{14}z_{23}z_{24}
  (4+z_{13}+z_{14}+z_{23}+z_{24}).
\end{align*}
We observe that every coefficient appearing above is positive. Hence, we have non-negativity coefficientwise in $\Gamma_H^{\mathcal{F}}(1,2;3,4)$.
\end{example}

We note that a similar example can be calculated for the random-cluster measure and we again have coefficientwise positivity in the connectivity correlation. 

The lower covariance bound has two further applications in statistical physics. Positive correlation between connectivity events implies super-multiplicativity of the connection probability function\footnote{This is sometimes called the two-point function in percolation theory.} and hence the existence of directional inverse correlation lengths. We then show that extending the two-sided covariance bound to an arbitrary auxiliary forest yields a quantitative bound on boundary influence and, in two dimensions, uniqueness of the infinite-volume Gibbs measure.

\begin{prop}
\label{prop: super-multiplicavity}
Let $\mu$ be a translation-invariant probability measure on the subgraphs of $\mathbb{Z}^d$. Suppose the connectivity events $\{a\leftrightarrow b\}$, $\{c\leftrightarrow d\}$ are positively correlated for any $a,b,c,d\in V(\mathbb{Z}^d)$. Let \[\theta(x):=\mu(0 \leftrightarrow x)\]
be the connection probability (two-point) function, and assume $\theta(x)>0$ for any $x\in \mathbb{Z}^d$. Then, 
\[\theta(x+y)\geq \theta(x)\theta(y), \quad \forall x,y\in \mathbb{Z}^d.\]
Moreover, for every $x\in \mathbb{Z}^d$, the inverse correlation length defined as \[\xi(x):=\operatorname{lim}_{n\to \infty}-\dfrac{1}{n}\operatorname{log}(\theta(nx))\]
exists in $[0,\infty]$, and satisfies 
\[\xi(x+y)\leq \xi(x)+\xi(y), \quad \forall x,y \in \mathbb{Z}^d.\]
When $\xi$ is finite, it extends by continuity and homogeneity to a convex seminorm on $\mathbb{R}^d$; in a phase with exponential decay in every direction, it is a norm.
\end{prop}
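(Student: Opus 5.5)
The plan has three steps: derive supermultiplicativity from the correlation hypothesis, get the limit from Fekete's lemma, and then pass from subadditivity to a convex seminorm.

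\textbf{Supermultiplicativity.} Fix $x,y\in\mathbb{Z}^d$ and apply the hypothesis with $a=0$, $b=x$, $c=x$, $d=x+y$. The hypothesis is stated for four vertices, so the degenerate case $b=c$ has to be admitted. If only distinct vertices are allowed, I would first apply it to distinct quadruples and then argue that the common endpoint causes no problem; I expect the intended reading is that any $a,b,c,d$ are allowed.

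On the event $\{0\leftrightarrow x\}\cap\{x\leftrightarrow x+y\}$, transitivity of connectivity gives $0\leftrightarrow x+y$. Hence
\[\theta(x+y)\ge\mu(0\leftrightarrow x,\ x\leftrightarrow x+y)\ge\mu(0\leftrightarrow x)\,\mu(x\leftrightarrow x+y).\]
Translation invariance gives $\mu(x\leftrightarrow x+y)=\theta(y)$, which proves $\theta(x+y)\ge\theta(x)\theta(y)$.

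\textbf{Existence and subadditivity of $\xi$.} Set $g(n)=-\log\theta(nx)$, which is finite because $\theta>0$ and nonnegative because $\theta\le 1$. By the first step, $g(m+n)\le g(m)+g(n)$. Fekete's lemma then gives that $\lim g(n)/n=\inf_n g(n)/n$ exists and lies in $[0,\infty)$. It is therefore automatically in $[0,\infty]$; the value $\infty$ cannot actually occur, though allowing it is harmless.

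For subadditivity, note that $\theta(n(x+y))\ge\theta(nx)\theta(ny)$. Taking $-\frac1n\log$ and letting $n\to\infty$ gives $\xi(x+y)\le\xi(x)+\xi(y)$.

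\textbf{Extension to a convex seminorm.} This step carries most of the bookkeeping.

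First, homogeneity: $\xi(kx)=k\xi(x)$ for integers $k\ge1$, by taking the subsequence $n k$. This lets $\xi$ be defined on $\mathbb{Q}^d$ by $\xi(x/k)=\xi(x)/k$, and the definition is consistent.

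Second, symmetry: $\xi(-x)=\xi(x)$. This follows from $\theta(-x)=\mu(0\leftrightarrow -x)=\mu(x\leftrightarrow 0)=\theta(x)$, using translation invariance.

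Third, subadditivity together with positive homogeneity on $\mathbb{Q}^d$ gives convexity there. Bounding $\xi$ on the basis vectors gives $\xi(z)\le C\|z\|_1$, and subadditivity then yields the Lipschitz estimate
\[|\xi(z)-\xi(w)|\le C\|z-w\|_1.\]
So $\xi$ extends uniquely by continuity to a seminorm on $\mathbb{R}^d$.

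Finally, suppose there is exponential decay in every direction, meaning $\xi(x)>0$ for every nonzero lattice $x$. Then $\xi>0$ on nonzero rational points. Extending positivity to all nonzero real points is not immediate from this alone. I would use the following argument. The set $\{\xi=0\}$ is a linear subspace, because $\xi$ is a seminorm. If it is nonzero, then, since $\xi$ is determined by its values on $\mathbb{Q}^d$, I need to show that this subspace contains a rational point. That is not automatic. So I would read ``exponential decay in every direction'' as a uniform bound $\theta(x)\le Ce^{-c|x|}$, which directly gives $\xi(z)\ge c|z|$, and I would state the hypothesis that way.
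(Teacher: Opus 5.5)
Your proposal is correct and follows the paper's proof step for step: supermultiplicativity from $\{0\leftrightarrow x\}\cap\{x\leftrightarrow x+y\}\subseteq\{0\leftrightarrow x+y\}$ with the quadruple $(0,x,x,x+y)$, Fekete's lemma, homogeneity and symmetry to define $\xi$ on $\mathbb{Q}^d$, and the Lipschitz bound $|\xi(z)-\xi(w)|\le \max_i\xi(e_i)\|z-w\|_1$ to extend it to $\mathbb{R}^d$. Your reading of ``exponential decay in every direction'' as a uniform bound $\theta(x)\le Ce^{-c|x|}$ is exactly what the paper assumes, namely $\theta(x)\le\exp(-c\|x\|_2)$, and your observation that positivity of $\xi$ on lattice directions alone would not force a norm is a fair justification for that choice.
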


\begin{proof}
We have $\{0 \leftrightarrow x+y\} \supseteq \{0\leftrightarrow x\}\cap \{x \leftrightarrow x+y\}$. Using translation invariance and positive correlation of connectivity events, we get
\begin{align*}
\theta(x+y)&=\mu(0\leftrightarrow x+y)\geq \mu(0\leftrightarrow x)\mu(x\leftrightarrow x+y) \\
&=\mu(0\leftrightarrow x)\mu(0\leftrightarrow y)=\theta(x)\theta(y).
\end{align*}

Let $a_n=-\operatorname{log}(\theta(nx))$. We have $a_{n+m}\leq a_n+a_m$; hence $\operatorname{lim}_{n\to \infty}\frac{a_n}{n}=\operatorname{inf}_{n\geq 1}\frac{a_n}{n}$ by Fekete's lemma. Thus, we have $0\leq \xi(x)\leq -\operatorname{log}(\theta(x))<\infty$. We then have
\[-\dfrac{1}{n}\operatorname{log}(\theta(n(x+y)))\leq -\dfrac{1}{n}\operatorname{log}(\theta(nx))-\dfrac{1}{n}\operatorname{log}(\theta(ny)),\]
and taking the limit proves $\xi(x+y)\leq \xi(x)+\xi(y)$. 

By translation invariance and symmetry of the connectivity function, we observe that $\theta(x)=\theta(-x)$, and hence $\xi(-x)=\xi(x)$. Moreover, observe that 
\[\xi(kx)=\operatorname{lim}_{n\to \infty}-\dfrac{\operatorname{log}(\theta(nkx))}{n}=k\operatorname{lim}_{n\to \infty}-\dfrac{\operatorname{log}(\theta(nkx))}{nk}=k\xi(x).\]
Using these, and the fact that $\xi(0)=0$, we can extend the function to $\mathbb{Q}^d$, and using continuity, we can extend it to $\mathbb{R}^d$. Next, note that for $x,y\in \mathbb{Q}^d$, 
\[|\xi(x)-\xi(y)|\leq \xi(x-y)\leq \operatorname{max}_{1\leq i \leq d}\xi(e_i)\|x-y\|_1,\]
where $e_i$ are the standard basis vectors. Hence, the extension is uniquely defined. 
For the last claim, suppose $\theta(x)\leq \operatorname{exp}(-c\|x\|_2)$. Then, we have $\xi(x)\geq c\|x\|_2$, and this holds on all of $\mathbb{R}^d$ by continuity and homogeneity. Thus, we have $\xi(x)>0$ for all $x\neq 0$. With the other properties we established, this shows that $\xi$ is a norm. We note that the assumption of exponential decay is necessary. Otherwise, one can take $\theta\equiv 1$, which satisfies all the assumptions, and observe that $\xi=0$ everywhere, which is not a norm. 
\end{proof}

Next, we give a sufficient condition for uniqueness of the
infinite-volume Gibbs measure on $\mathbb Z^2$. We extend
the formulation of the covariance bounds from a single edge
to a forest $F$, which represents connections imposed at
the boundary. Assuming both bounds hold in this setting,
we show that boundary influence vanishes as the finite
regions grow, yielding uniqueness.

Let us first introduce the relevant
boundary conditions and events. Let $G\subseteq\mathbb Z^d$
be a finite subgraph, with boundary
\[\partial G=\{v\in V(G):v\text{ is incident to an edge of }
E(\mathbb Z^d)\setminus E(G)\}.\]
A boundary condition is a partition of $\partial G$;
vertices in the same block are regarded as connected
outside $G$. We represent each block by an auxiliary tree
on its vertices and denote the union of these trees by $F$.
The edges of $F$ are distinct from those of $G$ and are
not sampled. If an auxiliary edge has the same endpoints
as an edge of $G$, the two are treated as distinct parallel
edges.

Let $A$ be sampled from the free arboreal-gas measure on $G$.
We say that $A$ is compatible with $F$ if $A\cup F$ is a
forest, and define
\[B_F:=\{A\cup F\text{ is a forest}\}.\]
The arboreal-gas measure with boundary condition represented
by $F$ is
\[\Prob_G^{F,\mathrm{bdry}}(\mathcal E):=\Prob_G^{\mathcal{F}}(\mathcal E\mid B_F).\]
Equivalently, its probability mass function is
\[\Prob_G^{F,\mathrm{bdry}}(A)=\frac{1}{Z_G^{F,\mathrm{bdry}}}\left(\prod_{h\in A}z_h\right)\mathbf1_{B_F},\qquad A\subseteq E(G),\]
where
\[Z_G^{F,\mathrm{bdry}}=\sum_{\substack{A\subseteq E(G)\\
A\cup F\text{ is a forest}}}\prod_{h\in A}z_h.\]
The free boundary condition corresponds to $F=\varnothing$.

Now fix a further auxiliary edge $g=xy$, distinct from
the edges of both $G$ and $F$, and put
\[D_g:=\{A\cup\{g\}\text{ is a forest}\}.\]
Since $A$ is a forest, this is precisely the disconnection
event
\[D_g=\{x\not\leftrightarrow y\text{ in }A\}=N_g^c.\]
We also define
\[J_{F,g}:=(B_F\cap D_g)\setminus\{A\cup F\cup\{g\}\text{ is a forest}\}.\]
Thus $J_{F,g}$ occurs when $F$ and $g$ can each be added
separately, but adding them together creates a cycle.
Equivalently,
\[J_{F,g}=B_F\cap
\{x\not\leftrightarrow y\text{ in }A,\,x\leftrightarrow y\text{ in }A\cup F\}.\]

We consider the following extension of our covariance bounds:
\[
0\leq\Cov_G^{\mathcal{F}}(\mathbf1_{B_F},\mathbf1_{D_g})\leq
\Prob_G^{\mathcal{F}}(J_{F,g}).\]
All probabilities and covariances in this inequality are
taken under the free arboreal-gas measure on $G$.
When $F=\{e\}$ and $g=f$, we have $B_F=N_e^c$ and
$D_g=N_f^c$. Since complementing both events preserves
covariance, the bounds reduce to our bound from Section \ref{sec: preliminarylemmas}
\[0\leq\Cov_G^{\mathcal{F}}(\mathbf1_{N_e},\mathbf1_{N_f})
\leq\Prob_G^{\mathcal{F}}(J_{ef}).\]

For the infinite-volume interpretation, we use the
augmented Gibbs framework of Halberstam and Hutchcroft
\cite[Section~2.1]{halberstam-hutchcroft-2024}.
In this framework, a forest is accompanied by consistent
boundary partitions recording exterior connectivity.
Conditional on the exterior edges and this connectivity
information, its restriction to a finite region has the
corresponding finite-volume arboreal-gas law.

By \cite[Proposition~2.1]{halberstam-hutchcroft-2024},
the infinite-volume Gibbs measures are precisely the
subsequential weak limits of finite-volume measures along
exhaustions\footnote{For any graph $G$, an increasing sequence of finite subgraphs of $G$ whose union is the entire graph $G$ is called an exhaustion \cite{halberstam-hutchcroft-2024}.}, allowing possibly random boundary conditions.
Weak convergence here means convergence of the joint
distribution of every fixed finite collection of edges.
Our aim is to show that all such limits coincide.

To match the boundary convention in that paper without
ambiguity concerning self-loops created by boundary
identifications, we use the exhaustion
\[G_N=\mathbb Z^d[\{x\in\mathbb Z^d:\|x\|_1\leq N\}].\]
Its boundary contains no adjacent vertices, so identifying
vertices in the same boundary block creates no self-loops
from edges of $G_N$. On these regions, the boundary
condition above agrees with the quotient-graph formulation
of \cite{halberstam-hutchcroft-2024}.

We next explain how the two covariance bounds control the
effect of boundary conditions on the inclusion probability
of an edge. Let $g=xy\in E(G)$ have fugacity $z_g$, put
$H=G-g$, and let $F$ be an auxiliary forest representing
connections between boundary vertices of $G$.
All probabilities in the following definitions are taken
under the free arboreal-gas measure on $H$:
\[b=\Prob_H^{\mathcal{F}}(B_F),\qquad d=\Prob_H^{\mathcal{F}}(D_g),\qquad c=\Prob_H^{\mathcal{F}}(B_F\cap D_g),\qquad j=\Prob_H^{\mathcal{F}}(J_{F,g}).\]

We write the free boundary and $F$ boundary probabilities as follows:
\[p_g^\varnothing:=\Prob_G^{\mathcal{F}}(g\in A),\qquad p_g^F:=
\Prob_G^{F,\mathrm{bdry}}(g\in A).\]

Without boundary constraints, $g$ can be added
precisely on $D_g$. With boundary condition $F$, configurations
excluding $g$ must satisfy $B_F$, while configurations
including $g$ correspond to
\[\{A\cup F\cup\{g\}\text{ is a forest}\}
=(B_F\cap D_g)\setminus J_{F,g}.\]
Thus, separating configurations according to whether they contain
$g$ gives
\[p_g^\varnothing=\frac{z_gd}{1+z_gd},
\qquad p_g^F=\frac{z_g(c-j)}{b+z_g(c-j)}.\]
Subtracting the two inclusion probabilities, we obtain
\[
p_g^\varnothing-p_g^F=
\frac{z_g(j-\Cov_H^{\mathcal{F}}(\mathbf1_{B_F},\mathbf1_{D_g}))
}{(1+z_gd)\bigl(b+z_g(c-j)\bigr)}.\]
The upper covariance bound makes this difference
nonnegative. The lower covariance bound makes the numerator
at most $z_gj$. Since the denominator is at least $b>0$,
we deduce
\[0\leq p_g^\varnothing-p_g^F\leq z_g\frac{j}{b}=z_g\Prob_H^{\mathcal{F}}(J_{F,g}\mid B_F).\]

Let $\partial_F G$ be the set of boundary vertices incident
to an auxiliary edge of $F$. On $J_{F,g}$, the vertices
$x,y$ are disconnected in $A$ but connected in $A\cup F$.
The resulting path must use an edge of $F$. Following this
path from $x$ up to its first auxiliary edge shows that
$x$ is connected in $A$ to $\partial_F G$. Consequently,
\[
0\leq p_g^\varnothing-p_g^F\leq z_g\Prob_H^{\mathcal{F}}(J_{F,g}\mid B_F)\leq z_g\Prob_H^{F,\mathrm{bdry}}(x\leftrightarrow\partial_F G).\]
Thus the effect of the boundary condition on the inclusion
probability of $g$ is bounded by a single connection
probability, with no factor depending on the number of
boundary vertices.

Before proceeding with the theorem, we recall the contrasting percolation behaviour of the
arboreal gas in two and higher dimensions.
Bauerschmidt, Crawford, Helmuth, and Swan
\cite{bauerschmidt2021random} proved that, for every
$\lambda>0$, the arboreal gas on $\mathbb Z^2$ has no
infinite trees. In the augmented Gibbs formulation, this
holds for every infinite-volume Gibbs measure, including
limits obtained with arbitrary boundary conditions; see
\cite[Remark~1]{halberstam-hutchcroft-2024}.
Thus, for every such measure $\mu$,
\[
\mu(\text{every component of }A\text{ is finite})=1.
\]

In contrast, Bauerschmidt, Crawford, and Helmuth
\cite{bauerschmidt2024percolation} proved that, for every
$d\geq3$, infinite trees occur at sufficiently large
fugacity. More precisely, for sufficiently large
$\lambda$, subsequential limits of the arboreal gas on
the discrete tori considered in their work contain an
infinite tree almost surely.
The two-dimensional absence of infinite trees is the
input we use below to control the probability of
connecting a fixed vertex to a distant boundary.

\begin{thm}[Conditional Gibbs uniqueness in two dimensions]
\label{thm:conditional-gibbs-uniqueness}
Assume that the covariance bounds
\[
0\leq
\Cov_H^{\mathcal{F}}(\mathbf1_{B_F},\mathbf1_{D_g})
\leq
\Prob_H^{\mathcal{F}}(J_{F,g})
\]
hold for every finite graph $H$, every choice of positive
edge fugacities, every auxiliary boundary forest $F$,
and every further auxiliary edge $g$.
Then, for every $\lambda>0$, the arboreal gas measure with fugacity $\lambda$ on $\mathbb Z^2$ has a unique infinite-volume
Gibbs measure.
\end{thm}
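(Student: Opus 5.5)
We reduce uniqueness to showing that every finite-dimensional edge marginal is the same for all infinite-volume Gibbs measures. By \cite[Proposition~2.1]{halberstam-hutchcroft-2024}, every Gibbs measure is a subsequential weak limit of finite-volume measures $\Prob_{G_N}^{F_N,\mathrm{bdry}}$ on the diamonds $G_N$, with possibly random boundary forests $F_N$. The plan is to compare each such measure with the free measure $\Prob_{G_N}^{\mathcal F}$. We show the difference in the law of a fixed finite edge set tends to zero uniformly in $F_N$. Then every Gibbs measure agrees with any subsequential free limit; in particular the free limit is itself unique. Random boundary conditions are handled by averaging, since the bound below will be uniform in $F$.

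\textbf{Single-edge marginals.} Apply the displayed estimate preceding the theorem with $G=G_N$ and $g=xy$ a fixed edge. It gives
\[0\le p_g^\varnothing-p_g^{F}\le \lambda\,\Prob_{H}^{F,\mathrm{bdry}}(x\leftrightarrow\partial_F G_N)\le\lambda\,\Prob_{H}^{F,\mathrm{bdry}}(x\leftrightarrow\partial G_N),\]
where $H=G_N-g$. We must show the right side tends to $0$ as $N\to\infty$, uniformly in $F$. We argue by contradiction. Suppose along some subsequence and some boundary forests the probability stays above $\varepsilon$. Pass to a further subsequence where the measures $\Prob^{F_N,\mathrm{bdry}}_{G_N-g}$, with the boundary partitions viewed as augmented data, converge weakly to an augmented Gibbs measure $\mu$ for the graph with $g$ removed. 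Events $\{x\leftrightarrow \partial \Lambda_M\}$ for fixed $M\le N$ are local, and they dominate $\{x\leftrightarrow\partial G_N\}$. So $\mu(x\leftrightarrow\partial\Lambda_M)\ge\varepsilon$ for all $M$, and hence $\mu(|C(x)|=\infty)\ge\varepsilon$. This contradicts \cite{bauerschmidt2021random} in the form extended to all augmented Gibbs measures \cite[Remark~1]{halberstam-hutchcroft-2024}. The removal of one edge $g$ is harmless: either we treat $g$ as having fugacity $0$, or we note that $H$'s measure is $\Prob_{G}$ conditioned on $g\notin A$, an event of probability bounded below, so absolute continuity transfers the no-infinite-tree statement.

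\textbf{Finite sets of edges.} For a finite edge set $S=\{g_1,\dots,g_k\}$ and a pattern of states, we proceed by a chain of conditionings. Write the joint probability as a product of conditional single-edge probabilities. Conditioning on $g_1,\dots,g_{i-1}$ being present or absent produces again an arboreal gas on a smaller graph with a modified auxiliary forest. Presence of $g_j$ is absorbed into $F$ as an extra auxiliary edge, and absence deletes the edge. The hypothesis is assumed for every finite graph, fugacities and auxiliary forests, so the same bound applies to each factor. The connection event now concerns $\partial_F$, which includes the endpoints of the conditioned edges near the origin. Two alternatives handle this: the monotone sandwich, or the inclusion-exclusion identity expressing a cylinder probability through probabilities of "all edges of $T\subseteq S$ present". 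We choose the following. Order the edges and compare the boundary-$F$ measure with the boundary-$F'$ measure, where $F'$ retains only the conditioned local edges. Both have the same local part, so the difference is controlled by connection from $x$ to $\partial G_N$ through the boundary part of the forest, which again vanishes.

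\textbf{Main obstacle.} The hardest step is the uniformity in the last part. Once local edges are placed in $F$, the event $J_{F,g}$ can be triggered by local auxiliary edges, so the crude bound by $x\leftrightarrow\partial_F G$ no longer tends to zero. We would first try to refine the path argument: on the part of $J$ that differs between $F$ and $F'$, the cycle must use a far boundary edge, forcing $x\leftrightarrow\partial G_N$ in $A\cup F_{\mathrm{loc}}$. Since $F_{\mathrm{loc}}$ is finite, this is still a local-to-infinity connection, which is killed by the absence of infinite trees as above.
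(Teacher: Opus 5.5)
Your single-edge step is the paper's argument: uniform vanishing of $\sup_F\Prob^{F,\mathrm{bdry}}_{G_N}(x\leftrightarrow\partial G_N)$ via a subsequential Gibbs limit and the absence of infinite trees, transferred to $G_N-g$ by $\Prob(g\notin A)\ge 1/(1+\lambda)$. Use your second option there, since the cited results concern $\mathbb Z^2$, not $\mathbb Z^2$ minus an edge.

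The gap is the extension to finite edge sets. The estimate $0\le p_g^\varnothing-p_g^F\le z_g\Prob_H^{\mathcal F}(J_{F,g}\mid B_F)$, and the hypothesis behind it, always compare an $F$-boundary measure with the \emph{free} measure on the same graph. This is because it comes from $p_g^\varnothing-p_g^F=z_g\bigl(j-\Cov_H^{\mathcal F}(\1_{B_F},\1_{D_g})\bigr)/(\cdots)$. After conditioning on a local pattern, you must instead compare the $F_{\mathrm{loc}}$-measure with the $(F_{\mathrm{loc}}\cup F)$-measure. No refinement of the path argument for $J$ gives this. The differences $p^\varnothing-p^{F_{\mathrm{loc}}}$ and $p^\varnothing-p^{F_{\mathrm{loc}}\cup F}$ are each of order one, so two free-versus-$F$ bounds say nothing about their difference. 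Your sentence ``both have the same local part, so the difference is controlled by \dots'' asserts exactly the inequality that is not yet available. Also, ``killed by the absence of infinite trees as above'' covers only unconditioned finite-volume measures, not measures conditioned on the local pattern.

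Both points can be repaired along your route. First, contract $F_{\mathrm{loc}}$. The $F_{\mathrm{loc}}$-measure on the remaining graph $G'$ is the free arboreal gas on $G'/F_{\mathrm{loc}}$. Edges that become loops are a.s.\ absent anyway, and parallel edges can be merged by adding fugacities, since all events involved are connectivity events. $F$ is still an auxiliary forest there, so the hypothesis applies. On $J$, some vertex of the fixed finite set $V(F_{\mathrm{loc}})\cup\{x\}$ is then joined to $\partial G_N$ in $A$. Second, telescope
\[
\prod_ia_i-\prod_ib_i=\sum_i\Bigl(\prod_{j<i}a_j\Bigr)(a_i-b_i)\prod_{j>i}b_j ,
\]
with $a_i$ the $F$-boundary conditionals. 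The error at step $i$ is then weighted by the pattern probability $\prod_{j<i}a_j$. Together with $\Prob(g_i\notin A\mid\cdot)\ge 1/(1+z)$, each error term is at most $z(1+z)$ times an \emph{unconditioned} connection probability, which vanishes uniformly in $F$.

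The paper avoids all of this by never leaving single edges. It applies the single-edge estimate to measures whose fugacities on a finite set $K$ are tilted by $\mathbf t$. Uniformity comes from the density bound $R(\mathbf t)$ and the fact that tilted Gibbs measures remain mixtures of finite-volume laws. It then recovers the joint law on $K$ from $t_g\partial_{t_g}\log\Psi_\mu(\mathbf t)=\mu_{\mathbf t}(g\in A)$ together with $\Psi_\mu(\mathbf 1)=1$.
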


\begin{proof}
Fix $\lambda>0$ and use the exhaustion
\[G_N=\mathbb Z^2[\{x\in\mathbb Z^2:\|x\|_1\leq N\}].\]
As discussed above, its boundary conditions can be
represented by auxiliary forests on $\partial G_N$.

By \cite[Proposition~2.1]{halberstam-hutchcroft-2024},
Gibbs measures are precisely subsequential limits of
finite-volume measures with possibly random boundary
conditions. Moreover, every such Gibbs measure on
$\mathbb Z^2$ has only finite trees almost surely by
\cite{bauerschmidt2021random}.

We first deduce that, for every fixed vertex $x$,
\[
\varepsilon_N(x):=\sup_F\Prob_{G_N}^{F,\mathrm{bdry}}
(x\leftrightarrow\partial G_N)\longrightarrow0,\]
where the supremum ranges over auxiliary forests representing
boundary partitions of $\partial G_N$.
Otherwise, there would be $\varepsilon>0$, a sequence
$N_j\to\infty$, and boundary conditions $F_j$ such that
\[\Prob_{G_{N_j}}^{F_j,\mathrm{bdry}}
(x\leftrightarrow\partial G_{N_j})\geq\varepsilon.\]
We pass to a subsequential local limit $\mu$, which is a Gibbs
measure. For each fixed sufficiently large $R$, connection
from $x$ to $\partial G_{N_j}$ implies connection from $x$
to $\partial G_R$ using edges of $G_R$. The latter event
depends on finitely many edges, so
\[\mu(x\leftrightarrow\partial G_R\text{ using edges of }G_R)\geq\varepsilon.\]
Letting $R\to\infty$ shows that the component of $x$ is
infinite with probability at least $\varepsilon$, a
contradiction.

Next, we fix a finite edge set $K$ and positive numbers
$\mathbf t=(t_h:h\in K)$, and replace the fugacity of
$h\in K$ by $\lambda t_h$, and write
\[R(\mathbf t)=\prod_{h\in K}\frac{\max\{1,t_h\}}{\min\{1,t_h\}}.\]
The resulting measure has density at most $R(\mathbf t)$
relative to the unmodified measure, uniformly in $N$
and the boundary condition.

Furthermore, under the unmodified measure,
\[\Prob_{G_N}^{F,\mathrm{bdry}}(g\notin A)\geq\frac1{1+\lambda}.\]
To see this, note that deleting $g$ maps every admissible configuration
containing $g$ injectively to an admissible configuration
excluding $g$, and divides its weight by $\lambda$.
The measure on $G_N-g$ is the measure on $G_N$ conditioned
on $g\notin A$. Hence,
\[
\Prob_{G_N-g,\mathbf t}^{F,\mathrm{bdry}}(x\leftrightarrow\partial G_N)\leq R(\mathbf t)(1+\lambda)\varepsilon_N(x)\longrightarrow0 \]
uniformly in $F$.

Now, fix an edge $g=xy$ in the interior of $G_N$, and let
$z_g$ be its possibly modified fugacity. Applying
the inequality $0\leq p_g^\varnothing-p_g^F\leq z_g\Prob_H^{\mathcal{F}}(J_{F,g}\mid B_F)
\leq z_g\Prob_H^{F,\mathrm{bdry}}(x\leftrightarrow\partial_F G)$ we obtained above to $H=G_N-g$ gives
\[0\leq
\Prob_{G_N,\mathbf t}^{\mathcal{F}}(g\in A)-\Prob_{G_N,\mathbf t}^{F,\mathrm{bdry}}(g\in A)\leq z_g\Prob_{G_N-g,\mathbf t}^{F,\mathrm{bdry}}(x\leftrightarrow\partial G_N).\]

Thus, the difference
tends to zero uniformly over boundary conditions.
The same conclusion holds for arbitrary mixtures of
boundary conditions.

It remains to show that this determines the full local
law. Let $\mu$ and $\nu$ be any two Gibbs measures, and
fix a finite edge set $K$. For $\mathbf t>0$, define
\[\Psi_\mu(\mathbf t)=\mathbb E_\mu\!\left[
\prod_{h\in K}t_h^{\mathbf1_{\{h\in A\}}}\right],\qquad
\frac{d\mu_{\mathbf t}}{d\mu}(A)=\frac{
\prod_{h\in K}t_h^{\mathbf1_{\{h\in A\}}}}{\Psi_\mu(\mathbf t)},\]
and define $\Psi_\nu$ and $\nu_{\mathbf t}$ similarly.

For every sufficiently large $N$, the augmented Gibbs
property expresses the restriction of $\mu_{\mathbf t}$
to $G_N$ as a mixture of finite-volume measures with the
modified fugacities. The same holds for $\nu_{\mathbf t}$.
Indeed, multiplying by the displayed density changes the
interior edge weights and may change the mixing
distribution of the boundary condition, but leaves the
form of the conditional interior law unchanged.

The uniform comparison with free boundary conditions
therefore implies, for every $g\in K$,
\[\mu_{\mathbf t}(g\in A)=\nu_{\mathbf t}(g\in A).\]
On the other hand,
\[t_g\frac{\partial}{\partial t_g}\log\Psi_\mu(\mathbf t)=
\mu_{\mathbf t}(g\in A),\]
and likewise for $\nu$. Thus all partial derivatives of
$\log\Psi_\mu-\log\Psi_\nu$ vanish on $(0,\infty)^K$.
Since
\[\Psi_\mu(\mathbf1)=\Psi_\nu(\mathbf1)=1,\]
we conclude that $\Psi_\mu=\Psi_\nu$.

The coefficients of these polynomials are exactly the
probabilities of the configurations on $K$, because
\[\Psi_\mu(\mathbf t)=\sum_{S\subseteq K}
\mu(A\cap K=S)\prod_{h\in S}t_h.\]
Hence $\mu$ and $\nu$ agree on every finite collection of
edges, and therefore $\mu=\nu$.
Existence follows from compactness and
\cite[Proposition~2.1]{halberstam-hutchcroft-2024}.
\end{proof}

\section{Matroids and Negative Correlation}
This is a speculative section where we explore the behavior of the covariance lower bound conjecture we have in the matroid setting. We believe that the conjecture is true in the matroid setting for appropriate events described below.  

Seymour and Welsh \cite{Seymour_Welsh_1975} showed that the negative correlation conjecture does not work in the matroid setting by exhibiting the $\mathcal{S}_{8}$ example. This matroid can be represented using the binary matrix 
\[
\begin{pmatrix}
1&1&1&1&1&1&1&0\\
0&1&0&0&0&1&1&1\\
0&0&1&0&1&0&1&1\\
0&0&0&1&1&1&0&1
\end{pmatrix}.
\]

For the pair $1$ and $8$, the counts of bases are 
\[
B=48,\qquad B_1=28,\qquad B_8=20,\qquad B_{18}=12,
\]
where $B_i$ denotes the number of bases containing the element $i$. We observe that the ordinary base-Rayleigh difference is \[B_1B_8-BB_{18}=-16.\]
Thus, the negative edge correlation fails for the above example. 

We can instead consider the connectivity version for Matroids. We note that in the graphic matroid, $e\in \operatorname{cl}(A)$ for an edge $e$ and an edge $A$, means that adding $e$ does not increase the matroid rank of the edge set $A$. Thus, the endpoints of $e$ are connected via the edges in $A$ already. Using this interpretation, we state the following conjecture. 

\begin{conj}
\label{conj: matroidcorrelation}
Let $M=(E,\mathcal{B})$ be a finite matroid with ground set $E$, and let $e,f\in E$ be distint elements. Let $H=M-e-f$. Suppose we have weights assigned to each element $g\in E$, $z_g \in \mathbb{R}_{\geq 0}$. Sample an independent set $A$ of $H$ with probability proportional to the product of its positive weight elements. Then, we have 
\[\Cov(\mathbf{1}_{e\in \operatorname{cl}_M(A)},\mathbf{1}_{f\in \operatorname{cl}_M(A)})\geq 0.\]
\end{conj}

Now, consider the following partition functions on each closure event where $\mathcal{I}(H)$ denotes the collection of independent sets of $H$, 
\[I_{ij}(\mathbf{z})
=\sum_{\substack{A\in\mathcal I(H)\\
        \1_{\{e\in\cl_M(A)\}}=i,\ 
        \1_{\{f\in\cl_M(A)\}}=j}}
  \prod_{g\in A}z_g,
\qquad i,j\in\{0,1\}.\]

These corresponds to the partition functions of the events that $\{a\leftrightarrow b\}$, $\{c\leftrightarrow d\}$ and their complements in graph setting where $e=ab$ and $f=cd$. 

With these, the numerator when we open the covariance becomes 
\[\Gamma_{ef}^M(\mathbf{z}):=I_{11}(\mathbf{z})I_{00}(\mathbf{z})-I_{10}(\mathbf{z})I_{01}(\mathbf{z}).\]

Non-negativity of every monomial coefficient in $\Gamma_{ef}^M(\mathbf{z})$ would thus imply Conjecture \ref{conj: matroidcorrelation} for all positive element weights. We tested the non-negativity of these coefficients using computers. Table \ref{tab:matroid-experiments} summarizes the results. The finite geometries are the Fano matroid, $\operatorname{PG}(2,3)$, $\operatorname{PG}(3,2)$, $\operatorname{AG}(2,3)$, and $\operatorname{AG}(3,2)$, together with their duals.  The random representable matroids were split equally among $\mathrm{GF}(2)$, $\mathrm{GF}(3)$, and $\mathrm{GF}(5)$.  For each random matroid and its dual we used eight marked pairs; for the uniform matroids and finite geometries, where the pairs are equivalent, we used one; and for $\mathcal S_8$, $V_8$, and their duals we used every pair.

\begin{center}
\begin{minipage}{0.86\textwidth}
\captionsetup{hypcap=false}
\captionof{table}{Exact coefficientwise tests of $\Gamma^{M}_{ef}(\mathbf{z})$.}
\label{tab:matroid-experiments}
\centering
\begin{tabular}{lrr}
\toprule
Family & Matroids & Marked pairs\\
\midrule
Uniform $U_{r,n}$, $2\leq n\leq12$ & $88$ & $88$\\
Finite geometries and duals & $10$ & $10$\\
$\mathcal S_8$ and $\mathcal S_8^*$ & $2$ & $56$\\
V\'amos $V_8$ and $V_8^*$ & $2$ & $56$\\
Random representable matroids and duals & $180$ & $1{,}440$\\
Random sparse-paving matroids and duals & $180$ & $1{,}440$\\
Random transversal matroids and duals & $120$ & $960$\\
\midrule
Total & $582$ & $4{,}050$\\
\bottomrule
\end{tabular}
\end{minipage}
\end{center}

Across these tests, $2{,}853{,}781$ monomial coefficients were strictly positive and none was negative.  The presentations, random seed, and exact computation are recorded in the companion file \texttt{explore\_matroid\_closure\_correlation.py}.

One important test here is the matroid $\mathcal{S}_8$. We observe that for $\mathcal{S}_8$, every coefficient of $\Gamma_{ef}^{\mathcal{S}_8}$ is non-negative for any pair of $e,f$ given.  

\section{Conclusion and Further Questions}

In this paper, we showed negative edge correlation for the arboreal gas measure at large $\lambda$-fugacity and for the random-cluster measure with $0<q<1$ with equal edge weights $z\geq 1$ on the complete graph $K_n$. One important aspect was the fact that rather than showing positivity for values, we showed that the polynomial itself splits as sums of squares for the arboreal gas measure. A direct continuation of this result would be to use similar ideas to establish positivity for the third and the later coefficients as well. Our example \ref{ex:k4-chain} shows that we can get the first $k$ highest coefficients to be all zero. 

We list the main conjectures, some that motivated this paper here, and some that arose during the course of this work again, since the reader who has gone through this paper might now have a new perspective on them. First, we have the original conjecture of negative edge correlation for forests and the random-cluster measure which was the motivation of this paper itself. In full generality, this conjecture remains open. 
\medskip

\noindent\textbf{Conjecture~\ref{conj:kahn} (restated).}
For every finite graph $G$, every $\lambda>0$, and all distinct edges $e$ and $f$,
\[
\Rayleigh_{ef}(\ForestZ_G)(\lambda\mathbf1)\geq0.
\]

\medskip
\noindent\textbf{Conjecture~\ref{conj:grimmett-winkler} (restated).}
Let $0<q<1$ and let all edge weights be nonnegative.  Then, for every finite graph $G$ and distinct edges $e,f$,
\[
\Rayleigh_{ef}(\RCZ_G)(q,\mathbf z)\geq0.
\]

Our result is good initial evidence for the sums of squares conjecture of Wagner. 
\medskip

\noindent\textbf{Conjecture~\ref{conj:sumsofsquares} (restated).}
Let $G=(V,E)$ be a finite graph, and let $e,f\in E$ be distinct edges. The Rayleigh difference $\Rayleigh_{ef}(\ForestZ_G)$ can be expressed as a finite sum of monomials in the edge weights multiplied by squares of polynomials in those weights. See Wagner~\cite{wagner2008negatively} for the more precise combinatorial formulation.

In Section \ref{sec:connectivity-programme}, we started considering the problem from a different perspective, the vertex connectivities. Here, we conjecture that the events $a\leftrightarrow b$ and $c\leftrightarrow d$ are positively correlated both in the arboreal gas measure and the random-cluster measure with $0<q<1$.

\medskip
\noindent\textbf{Conjecture~\ref{conj: forest-connectivity} (restated).}
Let $G=(V,E)$ be a finite graph, and let $a,b,c,d\in V$ be distinct vertices. Under the arboreal gas measure with positive fugacity at each edge, we have
\[\Gamma_G^{\mathcal{F}}(a,b;c,d) \geq 0.\]
In other words, 
\[\Prob_G^{\mathcal{F}}(a\leftrightarrow b, c \leftrightarrow d)\geq \Prob_G^{\mathcal{F}}(a\leftrightarrow b)\Prob_G^{\mathcal{F}}(c \leftrightarrow d).\]

\medskip
\noindent\textbf{Conjecture~\ref{conj: rc-connectivity} (restated).}
Let $G=(V,E)$ be a finite graph, and let $a,b,c,d\in V$ be distinct vertices. For every $0<q<1$, under the random-cluster measure, we have
\[\Gamma_G^{\mathrm{RC}}(a,b;c,d) \geq 0.\]
In other words, 
\[\Prob_G^{\mathrm{RC}}(a\leftrightarrow b, c \leftrightarrow d)\geq \Prob_G^{\mathrm{RC}}(a\leftrightarrow b)\Prob_G^{\mathrm{RC}}(c \leftrightarrow d).\]

\medskip
\noindent\textbf{Acknowledgements.}
The authors thank Arvind Ayyer and Svante Linusson for helpful discussions.  The research of the first author is supported by Swedish Research Council grant VR 2021-03687.  The second author thanks Zessta Software Services for supporting this independent research.
\begin{comment}
\appendix
\section{Appendix : Constant fugacity random forests in \texorpdfstring{$K_n$}{Kn}}

In this appendix, we include an AI generated proof (this proof was generated by GPT 6 Astra) of negative edge correlation for random forests (all edges have constant weight $\lambda$ in $K_n$). 

Fix $K_n$ and $\lambda$. Denote $\mathbb{P}[e \in F]$ by $x = x(\lambda, n)$. Negative edge correlation can be rewritten purely in terms of $\lambda$. 

\begin{prop}
Two adjacent edges are negatively correlated if and only if 
\[\Psi(x, n, \lambda) := (n-2)x^2 - (2+n\lambda)x + 2\lambda \geq 0.\]
And two non-adjacent edges are negatively correlated if and only if 
\[\Phi(x, n, \lambda) := \lambda(n-2)(n-3)x^2 - [n(n-4)\lambda^2 + (2n-9)\lambda -1]x - \lambda + 2(n-4)\lambda^2 \geq 0.\]
\end{prop}

We can get precise control over $x$ using the following fact. 

\begin{prop}
The marginal $x$ satisfies the second order Ricatti equation, 
\[2\lambda^2 x' = -n(n-1)\lambda x^2 + [n^2\lambda^2 + (2n+1)\lambda -1]x -2n\lambda^2 + \lambda.\]
\end{prop}

These two facts can be used to give a compact proof of negative correlation for large $n$. Indeed, standard techniques can be used to show that 
\[x = \dfrac{2}{n} + O\left(\dfrac{1}{\lambda n^2}\right).\]
This in turn shows that 
\[\Phi(x, n, \lambda) = \dfrac{4}{n^2} + O\left(\dfrac{1}{n^2}\right).\]

\end{comment}

\begingroup
\small
\bibliographystyle{amsplain}
\bibliography{jv}
\endgroup

\end{document}